\newtheorem{theorem}{Theorem}[section] 
\newtheorem{claim}[theorem]{Claim}
\theoremstyle{definition}
\newtheorem{definition}[theorem]{Definition}
\newtheorem{dc}[theorem]{Definition/Claim}
\newtheorem{example}[theorem]{Example}
\newtheorem{observation}[theorem]{Observation} 
\newtheorem{question}[theorem]{Question}
\newtheorem{convention}[theorem]{Convention}
\newtheorem{conjecture}[theorem]{Conjecture}
\newtheorem{context}[theorem]{Context}
\newtheorem{discussion}[theorem]{Discussion}
\newtheorem{hypothesis}[theorem]{Hypothesis}
\theoremstyle{remark}
\newtheorem{remark}[theorem]{Remark}
\newtheorem{conclusion}[theorem]{Conclusion}
\newcommand{\rest}{{\restriction}}
\newcommand{\wilog}{{\rm without loss of generality}}
\newcommand{\then}{{\underline{then}}}
\newcommand{\when}{{\underline{when}}}
\newcommand{\mn}{{\medskip\noindent}}
\newcommand{\sn}{{\smallskip\noindent}}
\newcommand{\und}{\underline}
\newcommand{\cE}{{\mathcal E}}
\newcommand{\bbD}{{\mathbb D}}
\newcommand{\cF}{{\mathcal F}}
\newcommand{\cP}{{\mathcal P}}
\newcommand{\bbR}{{\mathbb R}}
\newcommand{\cS}{{\mathscr S}}
\newcommand{\cT}{{\mathcal T}}
\newcommand{\cW}{{\mathcal W}}
\newcommand{\cY}{{\mathcal Y}}
\def\mathunderaccent#1#2 {\let\theaccent#1\skewfactor#2
\mathpalette\putaccentunder}
\def\putaccentunder#1#2{\oalign{$#1#2$\crcr\hidewidth
\vbox to.2ex{\hbox{$#1\skew\skewfactor\theaccent{}$}\vss}\hidewidth}}
\newenvironment{PROOF}[2][\proofname.]
   {\begin{proof}[#1]}
   {\end{proof}}
\begin{document}

\title {PCF arithmetic without and with choice \\
 Sh938}
\author {Saharon Shelah}
\address{Einstein Institute of Mathematics\\
Edmond J. Safra Campus, Givat Ram\\
The Hebrew University of Jerusalem\\
Jerusalem, 91904, Israel\\
 and \\
 Department of Mathematics\\
 Hill Center - Busch Campus \\ 
 Rutgers, The State University of New Jersey \\
 110 Frelinghuysen Road \\
 Piscataway, NJ 08854-8019 USA}
\email{shelah@math.huji.ac.il}
\urladdr{http://shelah.logic.at}
\thanks{The author would like to thank the Israel Science Foundation
 for partial support of this research. (Grant No. 242/03).  Publ. 938.
The author thanks Alice Leonhardt for the beautiful typing.
First Typed - 08/July/28}  


\date{February 24, 2010}

\begin{abstract}
We deal with relatives of GCH which are provable.  In
particular we deal with rank version of the revised GCH.  Our
motivation was to find such results when only weak versions of the
axiom of choice are assumed but some of the results gives us
additional information even in ZFC.  We also start to deal with pcf
for pseudo-cofinality (in ZFC with little choice).
\end{abstract}

\maketitle
\numberwithin{equation}{section}
\setcounter{section}{-1}

\section*{Annotated Content}

\noindent
\S0 \quad Introduction, pg.2
\begin{enumerate}
\item[${{}}$]   [We present introductory remarks mainly to \S3,\S4.]  
\end{enumerate}

\noindent
\S1 \quad Preliminaries, pg.3
\begin{enumerate}
\item[${{}}$]   [We present some basic definitions and claims, mostly 
used later.]
\end{enumerate}

\noindent
\S2 \quad Commuting ranks, pg.8
\begin{enumerate}
\item[${{}}$]   [If we have ideals $D_1,D_2$ on sets $Y_1,Y_2$ and
a $Y_1 \times Y_2$-rectangle $\bar\alpha$ of ordinals, we can compute rank
in two ways: one is first apply rk$_{D_1}$ on each row and then
rk$_{D_2}(-)$ on the resulting column.  In the other we first apply
rk$_{D_2}(-)$ on each column and then rk$_{D_1}(-)$ on the resulting
row.  We give sufficient conditions for an inequality.  We use (ZFC + DC
and) weak forms of choice like AC$_{Y_\ell}$ or AC$_{{\cP}(Y_\ell)}$.]
\end{enumerate}

\noindent
\S3 \quad Rank systems and a Relative of GCH, pg.13
\begin{enumerate}
\item[${{}}$]   [We give a framework to prove a relative of the main
theorem of \cite{Sh:460} dealing with ranks.  We do it with weak form
of choice (DC + AC$_{< \mu}),\mu$ a limit cardinal, this
give new information also in ZFC.]
\end{enumerate}

\noindent
\S4 \quad Finding systems, pg.21
\begin{enumerate}
\item[${{}}$]   [The main result in \S3 deals with an abstract
setting.  Here we find an example, a singular limit of measurables.
Note that even under ZFC this gives information on ranks.]
\end{enumerate}

\noindent
\S5 \quad Pseudo true cofinality, pseudo tcf, pg.23
\begin{enumerate}
\item[${{}}$]   [We look again at the pcf$(\bar \alpha)$, but only
for $\aleph_1$-complete filters using pseudo-cofinality and the
cofinalities not too small.  Under such restrictions we get parallel
to pcf basic results.]
\end{enumerate}

\section {Introduction} 

In \cite{Sh:460} and \cite{Sh:513}, \cite{Sh:829} we prove in ZFC = ZF
+ AC relatives of G.C.H.  Here mainly we are interested in  
relatives assuming only weak forms of choice, \underline{but} some results add
information even working in ZFC, in particular a 
generalization of \cite{Sh:460} for ranks.  
Always we can assume ZF + DC.

Our original motivation was
\begin{conjecture}
\label{y0.3}
Assume ZF + DC and $\mu$ a limit cardinal such that AC$_{< \mu}$
and $\mu$ is strong limit.  For every ordinal $\gamma$, for some
$\kappa < \mu$, for any $\alpha < \mu$ and $\kappa$-complete filter
$D$ on $\alpha$ we have rk$_D(\gamma) = \gamma$.

Here we get an approximation to it, i.e. for $\mu$ a limit of
measurables restricting ourselves to ultrafilters; this is conclusion
\ref{5e.17} deduced by applying Theorem \ref{m4.12} to Claim \ref{e5.13}.
Can we do it with $\mu = \beth_\omega$?

Also we would like to weaken AC$_{< \mu}$; this is interesting per
se and as then we will be able to combine \cite{Sh:835} +
\cite{Sh:513} - see below.  We intend to try in \cite{Sh:F955}; 
starting with $\bar J = \langle J_n:n < \omega\rangle$ such that
IND$(\bar J)$ or something similar.  
\end{conjecture}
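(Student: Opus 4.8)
\mn
The plan toward Conjecture \ref{y0.3} --- or rather toward the approximation to it packaged as Conclusion \ref{5e.17} --- is as follows. Since $\mathrm{rk}_D(\gamma)\ge\gamma$ holds for every proper filter $D$ (an induction on $\gamma$, with the constant functions $c_{\gamma'}$, $\gamma'<\gamma$, as witnesses), the whole problem is the upper bound $\mathrm{rk}_D(\gamma)\le\gamma$, equivalently $\mathrm{rk}_D(f)<\gamma$ for every function $f\colon\alpha\to\gamma$. I would prove this by induction on $\gamma$. The successor case $\gamma=\beta+1$ is immediate: $f\colon\alpha\to\gamma$ forces $f\le_D c_\beta$, so by the induction hypothesis ($\mathrm{rk}_D(c_\beta)=\beta$) we get $\mathrm{rk}_D(f)\le\beta<\gamma$. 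So only $\gamma$ a limit ordinal, of cofinality $\lambda$, is genuinely at issue.

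\mn
If $\lambda<\kappa$, where $\kappa$ is the completeness of $D$, one is done cheaply. Fix an increasing cofinal sequence $\langle\gamma_i:i<\lambda\rangle$ in $\gamma$; the sets $B_i=\{x\in\alpha:f(x)<\gamma_i\}$ are increasing with union $\alpha$, and since $D$ is a $\kappa$-complete \emph{ultra}filter and $\lambda<\kappa$, one of them, say $B_i$, lies in $D$ (otherwise all the $\alpha\setminus B_i$ lie in $D$, hence so does their intersection $\emptyset$). Then $f<_D c_{\gamma_i}$, so $\mathrm{rk}_D(f)<\mathrm{rk}_D(c_{\gamma_i})=\gamma_i<\gamma$ by the induction hypothesis. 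This is exactly where the restriction to ultrafilters is used: for a general $\kappa$-complete filter one only gets some $B_i\in D^+$, not $\in D$, and this naive step breaks --- which is why the full conjecture, with arbitrary $\kappa$-complete \emph{filters}, resists. The substantial case is therefore $\kappa\le\lambda\le|\alpha|<\mu$, i.e. $f$ unbounded in $\gamma$ modulo $D$.

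\mn
For that case I would follow the architecture of the annotated content. From the hypothesis that $\mu$ is a strong limit which is a limit of measurable cardinals, build a \emph{rank system} in the sense of \S3 --- this is Theorem \ref{m4.12}, where the normal measures on the measurables below $\mu$ furnish the requisite $\kappa$-complete ultrafilters and AC$_{< \mu}$ is used to assemble the system coherently. Then feed this system, together with the pseudo-pcf analysis for $\aleph_1$-complete filters of \S5 (Claim \ref{e5.13}), into the abstract main theorem of \S3 --- the relative of the main theorem of \cite{Sh:460} for ranks --- to conclude $\mathrm{rk}_D(\gamma)=\gamma$ for the $\gamma$ in question and a threshold $\kappa<\mu$ depending on $\gamma$ (it must exceed the relevant pcf spectrum below $\gamma$, which a revised-GCH-type argument keeps below $\mu$ when $\mu$ is a strong limit); this is Conclusion \ref{5e.17}. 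Throughout, the passage between the two orders of iterated rank --- first along rows, then down the resulting column, versus the reverse --- is controlled by the commuting-ranks inequalities of \S2, which is precisely why only AC$_{Y_\ell}$- and AC$_{{\cP}(Y_\ell)}$-type choice, never full AC, need be invoked.

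\mn
The main obstacle is running the pcf machinery --- scales, generators, the representation of ranks as pcf cofinalities --- under only DC $+$ AC$_{< \mu}$ rather than AC; the device that makes this affordable is exactly the recourse to measurable cardinals, since a $\kappa$-complete (indeed normal) ultrafilter is \emph{given} by measurability instead of having to be extracted by a choice argument, while the cofinality hypotheses of \S5 keep all index sets of size $<\mu$. Two gaps, flagged in \ref{y0.3}, remain: dropping the ultrafilter restriction (the partition step above has no analogue for mere $\kappa$-complete filters), and replacing ``limit of measurables'' by $\mu=\beth_\omega$ --- the latter would demand a combinatorial surrogate for the measures, such as a sequence $\bar J=\langle J_n:n<\omega\rangle$ with IND$(\bar J)$, and is postponed to \cite{Sh:F955}.
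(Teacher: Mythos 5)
You correctly note that the paper proves only the approximation, Conclusion \ref{5e.17}, and you give the right high-level pipeline: assemble a strict rank $1$-system from the measurables below $\mu$ (Claim \ref{e5.13}) and invoke the abstract Theorem \ref{m4.12}. But your diagnosis of \emph{why} ultrafilters are needed is wrong, and it misses the actual crux. In your ``$\lambda<\kappa$'' step you claim that for a mere $\kappa$-complete filter ``one only gets some $B_i\in D^+$, not $\in D$, and this naive step breaks.'' It does not break: $B_i\in D^+$ already suffices, because $D\subseteq D+B_i$ gives ${\rm rk}_D(f)\le{\rm rk}_{D+B_i}(f)<\gamma_i<\gamma$ by \ref{z0.24}(2) (in fact Claim \ref{z0.25}(2) gives the sharper ${\rm rk}_D(f)=\min_i{\rm rk}_{D+B_i}(f)$, and this is precisely what underlies clause $(f)$ of Definition \ref{m4.3}, verified for arbitrary $\kappa$-complete filters in the proof of Claim \ref{e5.3}). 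The ultrafilter hypothesis is consumed in a quite different place: in the proof of Claim \ref{e5.13}, to verify the strictness clause $(j)$ of Definition \ref{m4.6} via Claim \ref{m4.16}, one must show that the relevant pair of filters $2$-commutes, and the only tool available under the present hypotheses is Case 2 of Claim \ref{c3.8}, which requires a $|Y_2|^+$-complete \emph{ultrafilter}. So ultrafilters buy the commuting-ranks inequality of \S2, not the partition step, and that inequality is what makes Case 4 of Theorem \ref{m4.12} --- the limit $\zeta$ with $\cf(\zeta)=\cf(\mu)$, the only genuinely hard case --- go through.

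A few attribution errors as well. Claim \ref{e5.13} lives in \S4, not \S5, and the pseudo-true-cofinality material of \S5 plays no role in the derivation of Conclusion \ref{5e.17}. The case split driving Theorem \ref{m4.12} is not ``$\cf(\gamma)$ small vs.\ large relative to $\kappa$'' but rather $\zeta<\mu$, successor, limit with $\cf(\zeta)\ne\cf(\mu)$, and limit with $\cf(\zeta)=\cf(\mu)$; moreover even your ``cheap'' case is not free, because the inductively produced threshold $\kappa(\gamma_i)$ varies with $i$ and one must bound $\sup_i\kappa(\gamma_i)$ below $\mu$, which is exactly what the $\cf(\zeta)\ne\cf(\mu)$ dichotomy in Case 3 is handling. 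Finally, no ``revised-GCH-type argument'' appears: Conclusion \ref{5e.17} uses only that $\mu$ is a limit of measurables, and the strong-limit clause of the conjecture is not separately invoked there.
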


It may be illuminating to compare the present result with (see \cite[V]{Sh:g}).
\begin{claim}
\label{y0.7}
If $\kappa \ge \theta > \aleph_0,\lambda \ge 2^{2^\kappa}$ 
\underline{then} the following conditions are equivalent:
\medskip

\noindent
\begin{enumerate}
\item[$(*)_1$]    for every $\theta$-complete filter $D$ on $\kappa$,
we have {\rm rk}$_D(\lambda^+) = \lambda^+$ 
\smallskip

\noindent
\item[$(*)_2$]   $\alpha < \lambda^+ \Rightarrow 
\text{\rm rk}_D(\alpha) < \lambda^+$ for every 
$\theta$-complete filter $D$ on $\kappa$
\smallskip

\noindent
\item[$(*)_3$]   there is no ${\cF} \subseteq {}^\kappa \lambda$
of cardinality $\ge \lambda^+$ and $\theta$-complete filter $D$ in
$\kappa$ such that $f_1 \ne f_2 \in {\cF} \Rightarrow f_1 \ne_D f_2$.
\end{enumerate}
\end{claim}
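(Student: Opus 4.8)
The plan is to establish the cycle of implications $(*)_1 \Rightarrow (*)_2 \Rightarrow (*)_3 \Rightarrow (*)_1$, using the basic monotonicity and reflection properties of the rank function $\text{rk}_D$ together with a counting argument tied to the bound $\lambda \ge 2^{2^\kappa}$. Throughout, recall the standard facts about ordinal ranks modulo a filter: $\text{rk}_D$ is monotone in the ordinal argument, $\text{rk}_D(\alpha) \ge \alpha$ always, and $\text{rk}_D(\alpha) = \sup\{\text{rk}_D(\beta)+1 : \beta < \alpha, \text{ some coordinate hits } \beta \text{ mod } D\}$; also $D_1 \subseteq D_2$ gives $\text{rk}_{D_1}(\alpha) \ge \text{rk}_{D_2}(\alpha)$. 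These should all be available from the preliminaries in \S1 or are routine.

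For $(*)_1 \Rightarrow (*)_2$: fix $\alpha < \lambda^+$ and a $\theta$-complete filter $D$ on $\kappa$. Since $\text{rk}_D$ is monotone and $\alpha < \lambda^+$, we get $\text{rk}_D(\alpha) \le \text{rk}_D(\lambda^+) = \lambda^+$ by hypothesis; the point is to get strict inequality, which follows because $\text{rk}_D(\lambda^+)$ is a limit of the form $\sup_{\beta < \lambda^+} (\text{rk}_D(\beta)+1)$ over those $\beta$ realized mod $D$ — so if $\text{rk}_D(\alpha) = \lambda^+$ for some $\alpha < \lambda^+$ we could instead examine the least such $\alpha$ and derive that it already computes the full rank, contradicting that the sup is not attained. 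This is the easy direction.

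For $(*)_2 \Rightarrow (*)_3$: suppose toward a contradiction that such an $\cF \subseteq {}^\kappa\lambda$ with $|\cF| \ge \lambda^+$ and a $\theta$-complete filter $D$ exist with the pairwise $D$-difference property. The idea is to reflect: extend $D$ to an ultrafilter? — but we have full AC here, so yes, or more carefully build an appropriate $\theta$-complete filter witnessing a large rank. From $\lambda^+$ many pairwise $\ne_D$ functions into $\lambda$ one extracts, by the usual argument bounding $\text{rk}_D$ below by the length of a $<_D$-increasing (or merely $\ne_D$-separated, after thinning) sequence, that $\text{rk}_D(\lambda) \ge \lambda^+$, hence $\text{rk}_D(\alpha) \ge \lambda^+$ for some $\alpha \le \lambda < \lambda^+$, contradicting $(*)_2$. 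The thinning step — passing from a merely pairwise-distinct family to a linearly $<_D$-ordered subfamily of the same size — is where $\lambda \ge 2^{2^\kappa}$ enters, via a polarized/Erdős–Rado style partition on pairs colored by the (at most $2^{2^\kappa}$ many) possible "difference patterns" in ${\cP}(\kappa)$.

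For $(*)_3 \Rightarrow (*)_1$: if $(*)_1$ fails, then for some $\theta$-complete $D$ we have $\text{rk}_D(\lambda^+) > \lambda^+$ (it cannot be $< \lambda^+$ since $\text{rk}_D(\lambda^+) \ge \lambda^+$). Unwinding the recursive definition of $\text{rk}_D$ at $\lambda^+$, a value exceeding $\lambda^+$ yields a system of functions realizing distinct ranks below $\lambda^+$ on a $D$-positive set of coordinates; more precisely one builds a sequence $\langle f_\eta : \eta < \lambda^+ \rangle$ of functions in ${}^\kappa\lambda^+$ that is $<_D$-increasing, and then — since each $f_\eta$ maps into $\lambda^+$ but we need range in $\lambda$ — one uses $\lambda^+$-many of them together with a pressing-down/counting argument again invoking $2^{2^\kappa} \le \lambda$ to replace $\lambda^+$ by $\lambda$ in the range while keeping $|\cF| \ge \lambda^+$ and pairwise $\ne_D$, contradicting $(*)_3$.

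\textbf{Main obstacle.} The delicate point, in both $(*)_2 \Rightarrow (*)_3$ and $(*)_3 \Rightarrow (*)_1$, is the bookkeeping that converts a bound on $\text{rk}_D$ into a concrete family $\cF$ of the right size with range exactly in $\lambda$ (not $\lambda^+$) and pairwise $D$-inequality — this is precisely where the hypothesis $\lambda \ge 2^{2^\kappa}$ is consumed, through a partition-calculus argument on the $\le 2^{2^\kappa}$ possible "separating sets" $\{i < \kappa : f_1(i) \ne f_2(i)\} \in {\cP}(\kappa)$. Getting the cardinal arithmetic to close up cleanly, rather than losing a cardinal somewhere, is the heart of the matter; the rank manipulations themselves are standard pcf bookkeeping.
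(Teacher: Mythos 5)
First, note that the paper itself gives no proof of Claim~\ref{y0.7}: it is stated in the introduction with a pointer to \cite[V]{Sh:g}. So there is nothing in this source to compare against line by line; what follows is an assessment of your argument on its own terms, using the rank calculus that the paper \emph{does} develop in \S1 (Observation~\ref{z0.24}, Claim~\ref{z0.23}, Definition~\ref{z0.27}, Claim~\ref{z0.29}).

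The direction $(*)_1 \Rightarrow (*)_2$ is fine, though overcomplicated: simply $\text{rk}_D(\alpha) < \text{rk}_D(\alpha+1) \le \text{rk}_D(\lambda^+) = \lambda^+$ by \ref{z0.23}(3),(5) and \ref{z0.24}(1); no ``sup not attained'' analysis is needed. The serious gap is in how you propose to consume the hypothesis $\lambda \ge 2^{2^\kappa}$. In $(*)_2 \Rightarrow (*)_3$ you want to pass from a pairwise $\ne_D$ family of size $\lambda^+$ to a $<_D$-linearly ordered subfamily \emph{of the same size} by an Erd\H{o}s--Rado/polarized partition on pairs colored by ``difference patterns'' in $\cP(\kappa)$. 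That step fails quantitatively: the relevant partition relation, e.g.\ $(2^{2^\kappa})^+ \to \big((2^\kappa)^+\big)^2_{2^\kappa}$, only produces a homogeneous set of size $(2^\kappa)^+$, which is far below $\lambda^+$; no partition calculus on pairs will keep $\lambda^+$-many functions. The mechanism that actually works, and which is already prepared in \S1, is a \emph{pigeonhole on singletons}: associate to each $f\in\cF$ the ideal $J[f,D]$ of Definition~\ref{z0.27}. There are at most $2^{2^\kappa}\le\lambda < \lambda^+$ possible such ideals, so some $\cF'\subseteq\cF$ of size $\lambda^+$ has a constant value $J[f,D]=J$, with dual filter $D'\supseteq D$ still $\theta$-complete by \ref{z0.29}(1). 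By \ref{z0.29}(2), two members of $\cF'$ with equal $\text{rk}_D$-rank would be $=_{D'}$, contradicting $\ne_D$ (hence $\ne_{D'}$). So $\text{rk}_D\!\restriction\!\cF'$ is injective into $\text{rk}_D(\lambda)$, giving $\text{rk}_D(\lambda)\ge\lambda^+$ and refuting $(*)_2$. No linearization and no partition theorem on pairs is needed.

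The same misdiagnosis recurs in your $(*)_3\Rightarrow(*)_1$ step. There the genuinely easy part is the range reduction from $\lambda^+$ to $\lambda$: any $g\in{}^\kappa\lambda^+$ has $|\text{Range}(g)|\le\kappa<\text{cf}(\lambda^+)$, so $\text{Range}(g)$ is bounded by some $\beta<\lambda^+$, and $|\beta|\le\lambda$ lets you recode injectively into $\lambda$, which preserves $\ne_D$ (unlike $<_D$). The hypothesis $\lambda\ge 2^{2^\kappa}$ enters, once more, only via the pigeonhole on $J[\,\cdot\,,D]$ needed to upgrade ``distinct ranks'' (which only yields $\neg(f_1=_D f_2)$, not $f_1\ne_D f_2$ for a non-ultrafilter $D$) to genuine pairwise $\ne_{D'}$ for a single $\theta$-complete $D'\supseteq D$. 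The cleanest organization is to show $(*)_2\Leftrightarrow(*)_3$ by these two pigeonhole arguments, and then observe $(*)_2\Rightarrow(*)_1$ directly: if $\text{rk}_D(\lambda^+)>\lambda^+$ then by \ref{z0.23}(2A) some $g\in{}^\kappa\lambda^+$ has $\text{rk}_D(g)=\lambda^+$, but $g\le\beta$ pointwise for some $\beta<\lambda^+$, so $\text{rk}_D(g)\le\text{rk}_D(\beta)<\lambda^+$ by $(*)_2$, a contradiction. So: right global shape, wrong combinatorial engine; replace the Erd\H{o}s--Rado idea with the $J[f,D]$ pigeonhole and the argument closes.
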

\bigskip

\noindent
Also we can in \ref{y0.7} replace $\lambda^+$ by a cardinal of cofinality $>
2^{2^\kappa}$.  So the result in \cite{Sh:460} implies a weak version
of the conjecture above, say on $|\text{rk}_D(\alpha)|$, but the present
one gives more precise information.  On the other hand, the present
conjecture is not proved for $\mu = \beth_\omega$, also it seems 
less accommodating to the possible results with $\aleph_\omega$
instead of $\beth_\omega$ in \cite{Sh:513} below
$2^{2^{\aleph_\omega}}$.

\begin{question}
\label{y0.9}
In \cite{Sh:908} can we prove that the rank is small?
\end{question}

\begin{discussion}  In \ref{y0.4} below we 
present examples showing some limitations.

Below part (1) of the example shows that Claim \ref{c3.3} cannot be improved
too much and part (2) shows that Conclusion \ref{5e.17} cannot be
improved too much.  In fact, in conjecture \ref{y0.3} if we demand only
``$\mu$ is a limit cardinal" then it consistently fails.  This implies
that we cannot improve too much other results in \S3,\S4.

We may wonder how to compare the result in \cite{Sh:460} and
Conjecture \ref{y0.3} even in ZFC.
\end{discussion}

\begin{example}
\label{y0.4}
1) If $D_\ell = \text{ dual}([\kappa_\ell]^{< \kappa_\ell})$ for 
$\ell=1,2$ (so if $\kappa_\ell$ is regular then $D_\ell = 
\text{ dual}(J^{\text{bd}}_{\kappa_\ell}))$ and $\kappa_2 < \kappa_1$ \then \,
   $D_2$ does not 2-commute with $D_1$, i.e. $\boxplus^2_{D_1,D_2}$
from Definition \ref{c3.1} fail.

\noindent
2) Consistently with ZFC, for every $n$,
   rk$_{J^{\text{bd}}_{\aleph_n}}(\aleph_\omega) > \aleph_\omega$.
\end{example}

\begin{PROOF}{\ref{y0.4}}
1) Let $A = \kappa_1$ and let $f_2 \in {}^{\kappa_2}\text{Ord}$ be
   constantly 1 hence by Definition \ref{z0.27} and Claim
   \ref{z0.29}(3) the ideal $J_2 = J[f_2,D_2]$ is $[\kappa_2]^{<
   \kappa_2}$.  Choose a function $h:\kappa_1 \rightarrow \kappa_2$ 
and $(\forall \beta < \kappa_2)(\exists^{\kappa_1} 
\alpha < \kappa_1)(h(\alpha) = \beta)$
   and let $\langle B_\alpha:\alpha \in A\rangle$ be such that we have
$B_\alpha := \kappa_2 \backslash h(\alpha)$.

So if $A_* \in D_1,B_* \in J^+_2$ then for some $\alpha_* < \kappa_1$
we have $A_* \supseteq \kappa_1 \backslash \alpha_*$ and $B_* \subseteq
\kappa_2,|B_*| = \kappa_2$ and choose $t \in B_*$ and then choose
$s \in A_*$ such that $h(s) = t+1$, 
such $s$ exists by the choice of $h$ so $(s,t) \in
A_* \times B_*$ but $(s,t) \notin \{s\} \times B_s$, contradiction.

\noindent
2) Assume that the sequence
$\langle 2^{\aleph_n}:n < \omega \rangle$ is increasing with
supremum $> \aleph_\omega$ and in
cf$({}^{(\aleph_n)}(\aleph_n),<_{J^{\text{bd}}_{\aleph_n}})$ there is an
increasing sequence of length $\aleph_{\omega +1}$ for each $n \in
[1,\omega)$ hence it follows that
rk$_{J^{\text{bd}}_{\aleph_n}}(\aleph_\omega) > 
\text{ rk}_{J^{\text{bd}}_{\aleph_n}}(\aleph_n) \ge \aleph_n$ for $n \in
[1,\omega)$.
\end{PROOF}

We may hope to prove interesting
things in ZF + DC by division to cases: if \cite{Sh:835} apply fine,
if not then we have a strict $\bold p$.  \underline{But} we need AC$_{< \mu}$
to prove even clause (f) in \ref{m4.3}, see \cite{Sh:F955}.  
We may consider that even in ZFC, probably \cite{Sh:908} 
indicate that we can use weaker assumptions.

Let us say something on our program on set theory with little choice
of which this work is a part.  We always ``know" that the axiom of
choice is true.  In addition we 
had thought that there is no interesting general
combinatorial set theory without AC (though equivalence of version of
choice, inner model theory and some other exist).  Concerning the
second point, since \cite{Sh:497} our opinion changed and 
have thought that there is an interesting such
set theory, with ``bounded choice" related to pcf.  More specifically
\cite{Sh:497} seems to prove that such theory is not empty.  Then
\cite{Sh:835} suggest to look at axioms of choice ``orthogonal" to
``$\bold V = \bold L[\bbR]$", e.g. demand then ${}^{\omega \ge}\alpha$
can be well ordered (and weaker relatives).  The results say that the
universe is somewhat similar to universes gotten by Easton like
forcing, blowing up $2^\lambda$ for every regular $\lambda$ without
well ordering the new $\cP(\lambda)$.  Continuing this Larson-Shelah
\cite{LrSh:925} generalize classical theorem on splitting a stationary
subset of a regular $\lambda$ consisting of ordinals of cofinality $\kappa$. 

In \cite{Sh:F955} we shall continue this work.  In particular, we continue
\S5 to get a parallel of the pcf theorem and more.
Recall that in \cite{Sh:513} in ZFC 
we get connections between the
existence of independent sets and a strong form of \cite{Sh:460}.  We
prove related theorems on rank.  

We thank the referee for many corrections and remarks.

\section{Preliminaries} 

\begin{context}
\label{z0.0}  
1) We work in ZF in all this paper.

\noindent
2) We try to say when we use DC but assuming it always makes no great
harm.

\noindent
3) We shall certainly mention the use of any additional form of
choice, mainly  AC$_A$.

\noindent
4) In \ref{z0.11} - \ref{z0.29} we quote definitions and claims to
be used, see \cite{Sh:835}.    The rest of \S1 is used only in \S5.
\end{context}

\begin{definition}
\label{z0.11} 
1) A filter $D$ on $Y$ is $(\le B)$-complete \underline{when}: 
if $\langle A_t:t \in B\rangle \in {}^B D$ then $A
:= \cap\{A_t:t \in B\} \in D$.  We can instead say ``$|B|^+$-complete" even if
$|B|^+$ is not well defined.

\noindent
1A) A filter $D$ on $Y$ is pseudo $(\le B)$-complete \underline{when} if
$\langle A_t:t \in B\rangle \in {}^B D$ then $\cap\{A_t:t \in B\}$ is
not empty (so adopt the conventions of part (1)).

\noindent
2) For an ideal $J$ on a set $Y$ let dual$(J) = \{Y \backslash X:X 
\in J\}$, the dual ideal and Dom$(J) = Y$, abusing notation we assume
$J$ determines $Y$.  

\noindent
3) For a filter $D$ on a set $Y$ let dual$(D) =
\{Y \backslash X:X \in D\}$, Dom$(D) = Y$.  We may use properties
defined for filter $D$ for the dual ideal (and vice versa).

\noindent
4) For a filter $D$ on $Y$ let $D^+ = \{A \subseteq Y:Y \backslash A
\notin D\}$ and for an ideal $J$ on $Y$ let $J^+ =
(\text{dual}(J))^+$.  
\end{definition}

\begin{remark}  It may be interesting to try to assume that relevant
filters are just pseudo $(\le B)$-complete instead of $(\le
B)$-complete.  Now \ref{z0.55} clarify the connection to some extent,
but presently we do not pursue this direction.
\end{remark}

\begin{definition}
\label{z0.13}   
$\bold C$ is the class of sets
$A$ such that AC$_A$, the axiom of choice for $A$ non-empty sets, holds.
\end{definition}

\begin{definition}
\label{z0.15}  
1) $\theta(A) = \text{ Min}\{\alpha$: there is no function 
from $A$ onto $\alpha\}$.

\noindent
2) $\Upsilon(A) = \text{ Min}\{\alpha$: there is no one-to-one
function from $\alpha$ into $A\}$ so $\Upsilon(A) \le \theta(A)$.
\end{definition}

\begin{definition}
\label{z0.21}  
1) For $D$ a filter on $Y$ and $f,g \in {}^Y\text{Ord}$ let $f <_D g$
or $f < g$ mod $D$ means that $\{s \in Y:f(s) < g(s)\} \in D$;
similarly for $\le,=,\ne$.

\noindent
2) For $D$ a filter on $Y$ and $f \in {}^Y\text{Ord}$ and $\alpha 
\in \text{ Ord } \cup \{\infty\}$ we define when rk$_D(f) = \alpha$ by
induction on $\alpha$:
\medskip

\noindent
\begin{enumerate}
\item[$\circledast$]   For $\alpha < \infty$, rk$_D(f) = \alpha$ iff
$\beta < \alpha \Rightarrow \text{ rk}_D(f) \ne \beta$ and for every $g \in
{}^Y\text{Ord}$ satisfying $g <_D f$ there is $\beta < \alpha$ such
that rk$_D(g) = \beta$.
\end{enumerate}
\medskip

\noindent
3) We can replace $D$ by the dual ideal.
\end{definition}

\begin{observation}
\label{z0.24} 
1) Let $D$ be a pseudo $\aleph_1$-complete filter on $Y$.  If
$f,g \in {}^Y \text{\rm Ord}$ and $f \le_D g$ then {\rm rk}$_D(f) \le 
\text{\rm rk}_D(g)$ and so if $f =_D g$ then {\rm rk}$_D(f) = 
\text{\rm rk}_D(g)$.

\noindent
2) If $D_\ell$ is a pseudo $\aleph_1$-complete filter on $Y$ for
$\ell=1,2$ \underline{then} $D_1 \subseteq D_2 \wedge f \in
{}^Y\text{\rm Ord} \Rightarrow \text{\rm rk}_{D_1}(f) 
\le \text{\rm rk}_{D_2}(f)$.
\end{observation}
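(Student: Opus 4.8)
The plan is to isolate one tool and then apply it twice. The tool is a monotone reformulation of clause $\circledast$ in Definition \ref{z0.21}: for $f\in{}^Y\mathrm{Ord}$ and an ordinal $\gamma$,
\[
\text{rk}_D(f)\le\gamma \iff \text{rk}_D(h)<\gamma \text{ for all } h\in{}^Y\mathrm{Ord} \text{ with } h<_D f,
\]
where ``$\text{rk}_D(h)<\gamma$'' means ``$\text{rk}_D(h)$ is an ordinal and it is $<\gamma$''. First I would verify this by unwinding $\circledast$, which along the way records that $\text{rk}_D(f)$ is a well-defined element of $\mathrm{Ord}\cup\{\infty\}$, equal to the least $\alpha$ for which the right-hand clause holds, or $\infty$ if there is none. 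The direction ``$\Rightarrow$'' is immediate from $\circledast$; for ``$\Leftarrow$'', let $\alpha$ be least with $\text{rk}_D(h)<\alpha$ for all $h<_D f$ (this exists and is $\le\gamma$ by the hypothesis), and check the two clauses of $\circledast$ at $\alpha$ — the second directly, the first by minimality of $\alpha$ — to conclude $\text{rk}_D(f)=\alpha\le\gamma$. Throughout, the only property of $D$ I use is that, being a filter, it is closed under binary intersections and under supersets.

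For part (1), assume $f\le_D g$. If $\text{rk}_D(g)=\infty$ there is nothing to prove, so let $\alpha:=\text{rk}_D(g)$ be an ordinal. The crux is that $h<_D f$ implies $h<_D g$: the sets $\{s:h(s)<f(s)\}$ and $\{s:f(s)\le g(s)\}$ both lie in $D$, hence so does their intersection, and that intersection is contained in $\{s:h(s)<g(s)\}$. Now $\circledast$ applied to $g$ (which has rank $\alpha$) gives an ordinal $\beta<\alpha$ with $\text{rk}_D(h)=\beta$; so every $h<_D f$ has $\text{rk}_D(h)<\alpha$, and the reformulation above yields $\text{rk}_D(f)\le\alpha=\text{rk}_D(g)$. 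Interchanging the roles of $f$ and $g$ gives $\text{rk}_D(f)=\text{rk}_D(g)$ whenever $f=_D g$, since then $f\le_D g$ and $g\le_D f$.

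For part (2), assume $D_1\subseteq D_2$; then $h<_{D_1}f$ gives $\{s:h(s)<f(s)\}\in D_1\subseteq D_2$, i.e.\ $h<_{D_2}f$. I would prove $\text{rk}_{D_1}(f)\le\text{rk}_{D_2}(f)$ by induction on the ordinal $\alpha:=\text{rk}_{D_2}(f)$ (the value $\infty$ being trivial). Given $h<_{D_1}f$, we also have $h<_{D_2}f$, so $\circledast$ for $D_2$ supplies $\beta<\alpha$ with $\text{rk}_{D_2}(h)=\beta$, and then the induction hypothesis gives $\text{rk}_{D_1}(h)\le\beta<\alpha$. Thus every $h<_{D_1}f$ has $\text{rk}_{D_1}(h)<\alpha$, and the reformulation gives $\text{rk}_{D_1}(f)\le\alpha=\text{rk}_{D_2}(f)$.

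The only step asking for genuine care is the reformulation of $\circledast$ and, bundled with it, the fact that $\text{rk}_D$ is everywhere defined in $\mathrm{Ord}\cup\{\infty\}$ — that is, the bookkeeping around the value $\infty$ and the transfinite recursion underlying the definition. Once that is settled, part (1) is a one-line intersection argument and part (2) adds only a transparent induction that transports rank information from $D_2$ down to $D_1$; I do not anticipate any real obstacle beyond this.
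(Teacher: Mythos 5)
Your proof is correct and is the natural unwinding of Definition \ref{z0.21}: the reformulation $\text{rk}_D(f)\le\gamma\iff(\forall h<_D f)(\text{rk}_D(h)<\gamma)$, the intersection argument ($h<_D f\wedge f\le_D g\Rightarrow h<_D g$) for part (1), and the transfinite induction for part (2) since the two sides involve different filters. The paper's own proof is simply ``Easy,'' so there is nothing to compare; your write-up is essentially the intended argument, and you are right that only the filter property (closure under binary intersections and supersets) is used, not pseudo $\aleph_1$-completeness.
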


\begin{proof}  Easy.
\end{proof}

\begin{claim}
\label{z0.23}  Assume $D$ is a filter on $Y$
such that $D$ is $\aleph_1$-complete or just pseudo
$\aleph_1$-complete (see Definition \ref{z0.11}(1A)).

\noindent
1)  {\rm [DC]} For $f \in {}^Y\text{\rm Ord}$, 
in \ref{z0.21}, {\rm rk}$_D(f)$ is always an ordinal, i.e. $< \infty$.

\noindent
2)  {\rm [DC]} If $\alpha \le \text{\rm rk}_D(f)$ \underline{then} 
for some $g \in \prod\limits_{t \in Y} (f(t)+1)$ we 
have $\alpha = \text{\rm rk}_D(g)$. If $\alpha < \text{\rm rk}_D(f)$ we can 
add $g <_D f$ and we can demand $(\forall y \in Y)(g(y) < f(g) \vee
g(y) = 0 = f(y))$.

\noindent
2A) If {\rm rk}$_D(f) < \infty$ then part (2) holds for $f$ (without
assuming DC).

\noindent
3) If $f,g \in {}^Y\text{\rm Ord}$ and $f <_D g$ and {\rm rk}$_D(f) 
< \infty$ \then \, {\rm rk}$_D(f) < \text{\rm rk}_D(g)$.

\noindent
4) For $f \in {}^Y\text{\rm Ord}$ we have {\rm rk}$_D(f) > 0$ iff $\{t \in
Y:f(t) > 0\} \in D$.

\noindent
5) If $f,g \in {}^Y\text{\rm Ord}$ and $f =g+1$ then {\rm rk}$_D(f) 
= \text{\rm rk}_D(g)+1$.
\end{claim}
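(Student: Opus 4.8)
The plan is to prove the parts roughly in the order stated, since several of the later parts lean on the earlier ones, and to isolate the use of DC as precisely as possible. For part (1), I would argue by contradiction: if $\text{rk}_D(f)=\infty$, then unwinding Definition \ref{z0.21} there must exist $g<_D f$ with $\text{rk}_D(g)=\infty$ as well (otherwise every $g<_D f$ gets an ordinal rank, and since $\{\,\text{rk}_D(g):g<_D f\,\}$ is a set of ordinals, one could assign $f$ the ordinal one more than its supremum). Iterating, DC produces a sequence $\langle f_n:n<\omega\rangle$ with $f_{n+1}<_D f_n$ for all $n$, i.e. $\{t:f_{n+1}(t)<f_n(t)\}\in D$ for each $n$. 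Here is the one place pseudo $\aleph_1$-completeness enters: the countable intersection of these sets is nonempty, so picking $t$ in it gives a strictly decreasing $\omega$-sequence of ordinals $\langle f_n(t):n<\omega\rangle$, a contradiction. The main subtlety to get right is the "must exist $g$ with infinite rank'' step — one needs the observation that $\text{rk}_D(f)=\infty$ is equivalent to "$\text{rk}_D(f)\ne\beta$ for all ordinals $\beta$'', and then the failure of the defining clause at $\alpha=\sup\{\text{rk}_D(g)+1:g<_D f,\ \text{rk}_D(g)<\infty\}$ forces such a $g$.

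For parts (2), (2A), I would prove the finer statement (2A) first, by induction on $\beta=\text{rk}_D(f)$ (a legitimate ordinal induction, no DC needed). Given $\alpha\le\beta$: if $\alpha=\beta$ take $g=f$; if $\alpha<\beta$, the defining clause $\circledast$ at $\beta$ guarantees some $g_0<_D f$ with $\text{rk}_D(g_0)=\alpha$ (since $\alpha<\beta$ means $\beta$ is not "minimal'' for values below it — more precisely, were there no $g<_D f$ of rank exactly $\alpha$, one checks $\text{rk}_D(f)$ could be taken $\le\alpha$, using part (4) and a little care). Then replace $g_0$ by $g(t):=\min(g_0(t),f(t))$ when $f(t)>0$ and $g(t):=0$ when $f(t)=0$; using Observation \ref{z0.24}(1) (valid since $D$ is pseudo $\aleph_1$-complete) one gets $g=_D g_0$ hence $\text{rk}_D(g)=\alpha$, and by construction $g\in\prod_t(f(t)+1)$ with the pointwise domination clause. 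Part (2) is then just (2A) together with part (1) (which supplies $\text{rk}_D(f)<\infty$ under DC).

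For the remaining parts: (3) is immediate from $\circledast$ once $\text{rk}_D(f)<\infty$ — if $\beta=\text{rk}_D(f)$ and $f<_D g$, then by the clause defining $\text{rk}_D(g)$ (or the fact that $\text{rk}_D(g)\ne\gamma$ for $\gamma\le\beta$) one reads off $\text{rk}_D(g)>\beta$; one must only note that $f<_D g$ lets $f$ itself serve as a witness in $\circledast$ for $g$. Part (4): $\text{rk}_D(f)>0$ iff $\text{rk}_D(f)\ne 0$ iff there is $g<_D f$ with $\text{rk}_D(g)=0$; chasing definitions, $\text{rk}_D(g)=0$ means no $h<_D g$ exists, i.e. $g=_D \bar 0$, and $\bar 0<_D f$ says exactly $\{t:f(t)>0\}\in D$ — the forward direction needs that if $\{t:f(t)>0\}\notin D$ then $f=_D\bar 0$ so $\text{rk}_D(f)=\text{rk}_D(\bar 0)=0$ by \ref{z0.24}(1). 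Part (5), $f=g+1\Rightarrow\text{rk}_D(f)=\text{rk}_D(g)+1$: the inequality $\text{rk}_D(f)\ge\text{rk}_D(g)+1$ follows from (3) since $g<_D f$; for $\text{rk}_D(f)\le\text{rk}_D(g)+1$, take any $h<_D f$, so $h\le_D g$ pointwise on a $D$-set, hence $\text{rk}_D(h)\le\text{rk}_D(g)$ by \ref{z0.24}(1), and then $\circledast$ at $\alpha=\text{rk}_D(g)+1$ is satisfied. I expect the main obstacle to be purely bookkeeping: stating the induction in (2A) so that the "there is $g$ of rank exactly $\alpha$'' step is airtight — that is where one is genuinely using that $\text{rk}$ takes a definite ordinal value and that the dichotomy in $\circledast$ is phrased as "$\ne\beta$ for all smaller $\beta$'', and it is easy to write a circular-looking argument here if one is not careful to induct on the ordinal $\text{rk}_D(f)$ rather than on $\alpha$.
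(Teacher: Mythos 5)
Your overall strategy matches the paper's: the paper explicitly proves only part (2A), and it does so exactly as you propose, by induction on the ordinal $\beta=\text{rk}_D(f)$; the rest it declares ``straight.'' Your arguments for (1), (3), (5) are correct and standard. Two points deserve attention.

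First, in (2A) the parenthetical justification of the key step is not right: it is \emph{not} true that if no $g<_D f$ has $\text{rk}_D(g)=\alpha$ then $\text{rk}_D(f)\le\alpha$ --- there could perfectly well be $g<_D f$ with $\text{rk}_D(g)>\alpha$, which blocks $\text{rk}_D(f)=\alpha$ without touching $\text{rk}_D(f)=\beta$. What you can read off from $\circledast$ is weaker: since $\text{rk}_D(f)\ne\alpha$ and the minimality clause at $\alpha$ is fine (because $\beta>\alpha$), the second clause must fail, giving some $g<_D f$ with $\text{rk}_D(g)\ge\alpha$. The paper then splits: if $\beta=\alpha+1$, automatically $\text{rk}_D(g)<\beta$ so $\text{rk}_D(g)=\alpha$ and you are done after the pointwise-trimming step you describe; if $\beta>\alpha+1$, instead take $g<_D f$ with $\text{rk}_D(g)\ge\alpha+1$ (same reasoning at $\alpha+1$), note $\text{rk}_D(g)<\beta$, and apply the induction hypothesis to $g$. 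You flagged that ``a little care'' is needed here, and this case split is exactly that care; as written the sketch is circular in the non-successor case.

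Second, in (4) the assertion that $\{t:f(t)>0\}\notin D$ implies $f=_D\bar 0$ is false for a general filter (it would need $D$ to be an ultrafilter): not being in $D$ does not put the complement in $D$. But you do not need it. Directly from $\circledast$, $\text{rk}_D(f)=0$ iff no $g<_D f$ exists; and $g<_D f$ forces $\{t:f(t)>0\}\supseteq\{t:g(t)<f(t)\}\in D$, while conversely $\{t:f(t)>0\}\in D$ gives $\bar 0<_D f$. So the claimed equivalence holds without passing through $f=_D\bar 0$.
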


\begin{PROOF}{\ref{z0.23}}  
Straight, e.g.

\noindent
2A) We prove this by induction on $\beta = \text{ rk}_D(f)$.  If $\beta \le
\alpha$ there is nothing to prove.  If $\beta = \alpha +1$ by the
definition, there is $g <_D f$ such that rk$_D(g) \ge \alpha$, now by
part (3) we have rk$_D(g) < \text{ rk}_D(f)$ which means rk$_D(g) <
\alpha +1$, so together rk$_D(g) = \alpha$ and let $g' \in {}^Y\text{Ord}$ be
defined by $g'(s)$ is $g(s)$ if $g(s) < f(s)$ and is 0 if $g(s) \ge
f(s)$ so $g' <_D f$ and $g' \le_D g \le_D g'$ hence rk$_D(g') = \text{
rk}_D(g) = \alpha$ is as required.  

Lastly, if $\beta > \alpha +1$ by the definition there is $f' <_D f$
such that rk$_D(f') \ge \alpha +1$ and by \ref{z0.24}(1) \wilog \,
$t \in Y \Rightarrow f'(t) \le f(t)$ and by part (3) rk$_D(f') <
\text{ rk}_D(f)$ so we can apply the induction hypothesis to $f'$.
\end{PROOF}

\begin{claim}
\label{z0.25} 
1) [{\rm AC}$_{\aleph_0}$]  If $D$ is an $\aleph_1$-complete
filter on $Y$ and $f \in {}^Y${\rm Ord} and $Y = 
\cup\{Y_n:n < \omega\}$ \then \, {\rm rk}$_D(f) = 
 \text{\rm min}\{\text{\rm rk}_{D+Y_n}(f):n < \omega$ and $Y_n \in D^+\}$.

\noindent
2) [{\rm AC}$_{\cW}$] If $D$ is a $|{\cW}|^+$-complete filter on
$Y,{\cW}$ infinite and $f \in {}^Y${\rm Ord} and 
$\cup\{Y_t:t \in {\cW}\} \in D$ \then \, {\rm rk}$_D(f) = 
{ \text{\rm min\/}}\{\text{\rm rk}_{D+Y_t}(f):t \in {\cW}$ and
$Y_t \in D^+\}$.
\end{claim}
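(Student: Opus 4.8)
The plan is to prove both parts by a single induction, giving part (1) in full and then indicating the changes for part (2), where $\cW$, $\mathrm{AC}_\cW$ and $|\cW|^+$-completeness take the place of $\omega$, $\mathrm{AC}_{\aleph_0}$ and $\aleph_1$-completeness. One inequality is immediate: for each $n$ with $Y_n\in D^+$ the filter $D+Y_n$ is again $\aleph_1$-complete and contains $D$, so Observation \ref{z0.24}(2) gives ${\rm rk}_D(f)\le{\rm rk}_{D+Y_n}(f)$; hence ${\rm rk}_D(f)\le\min\{{\rm rk}_{D+Y_n}(f):Y_n\in D^+\}$, and the minimum is over a nonempty set of ordinals --- nonempty since, $D$ being a proper $\aleph_1$-complete filter with $\bigcup_nY_n\in D$, not all the $Y_n$ can be $D$-null --- hence attained. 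It remains to prove ${\rm rk}_D(f)\ge\min\{{\rm rk}_{D+Y_n}(f):Y_n\in D^+\}$, and for this I would establish, by induction on the ordinal $\alpha$, the statement $(\star)_\alpha$: \emph{if ${\rm rk}_{D+Y_n}(f)\ge\alpha$ for every $n$ with $Y_n\in D^+$, then ${\rm rk}_D(f)\ge\alpha$}; applying $(\star)_\alpha$ with $\alpha$ equal to the displayed minimum then closes the argument.

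In proving $(\star)_\alpha$ I would first reduce to the case that the $Y_n$ are pairwise disjoint, by passing to $Y_n\setminus\bigcup_{m<n}Y_m$: this preserves $\bigcup_nY_n$, and since shrinking $Y_n$ only enlarges $D+Y_n$ it preserves the hypothesis of $(\star)_\alpha$ (while the conclusion does not refer to the $Y_n$). The cases $\alpha\le1$ are direct: $(\star)_0$ is trivial, and for $(\star)_1$, if ${\rm rk}_{D+Y_n}(f)\ge1$ for every $Y_n\in D^+$ then by Claim \ref{z0.23}(4) we have $\{t:f(t)>0\}\cap Y_n\supseteq B_n\cap Y_n$ for some $B_n\in D$, the $B_n$ chosen simultaneously by $\mathrm{AC}_{\aleph_0}$, whence $\{t:f(t)>0\}\supseteq(\bigcap_nB_n)\cap\bigcup\{Y_n:Y_n\in D^+\}$, a set in $D$ by $\aleph_1$-completeness (the union of the $D$-null $Y_n$'s being $D$-null for the same reason), so ${\rm rk}_D(f)\ge1$ again by Claim \ref{z0.23}(4). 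The limit case of $(\star)_\alpha$ is immediate from the induction hypothesis, so the content lies in the successor case.

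For $\alpha=\delta+1$ with $\delta\ge1$: the induction hypothesis $(\star)_\delta$ already gives ${\rm rk}_D(f)\ge\delta$, so suppose for contradiction ${\rm rk}_D(f)=\delta$. For each $n$ with $Y_n\in D^+$ we have ${\rm rk}_{D+Y_n}(f)\ge\delta+1>\delta$, so by Claim \ref{z0.23}(2) (which uses DC, available throughout) I may pick $g_n\in\prod_t(f(t)+1)$ with $g_n<_{D+Y_n}f$, ${\rm rk}_{D+Y_n}(g_n)=\delta$, and $(\forall t)(g_n(t)<f(t)\vee g_n(t)=0=f(t))$, and, unpacking $g_n<_{D+Y_n}f$, a witness $B_n\in D$ with $B_n\cap Y_n\subseteq\{t:g_n(t)<f(t)\}$; the simultaneous choice of the $g_n$ and $B_n$ is the use of $\mathrm{AC}_{\aleph_0}$. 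Let $g\in{}^Y{\rm Ord}$ be given by $g\restriction Y_n=g_n\restriction Y_n$ when $Y_n\in D^+$ and $g\restriction Y_n=0$ otherwise, which is well defined as the $Y_n$ are now disjoint. Then $g=_{D+Y_n}g_n$ for $Y_n\in D^+$, so ${\rm rk}_{D+Y_n}(g)={\rm rk}_{D+Y_n}(g_n)=\delta$ for all such $n$ by Observation \ref{z0.24}(1); and $\{t:g(t)<f(t)\}\supseteq(\bigcap_nB_n)\cap\bigcup\{Y_n:Y_n\in D^+\}\in D$ exactly as above, so $g<_Df$. Now $(\star)_\delta$ applied to $g$ yields ${\rm rk}_D(g)\ge\delta$; but $g<_Df$ together with ${\rm rk}_D(g)\le{\rm rk}_D(f)=\delta<\infty$ (Observation \ref{z0.24}(1)) forces ${\rm rk}_D(g)<{\rm rk}_D(f)=\delta$ by Claim \ref{z0.23}(3), a contradiction.

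Part (2) runs along the same lines, with $|\cW|^+$-completeness in place of $\aleph_1$-completeness, $\mathrm{AC}_\cW$ in place of $\mathrm{AC}_{\aleph_0}$, and $\bigcup_{t\in\cW}Y_t\in D$ in place of $Y=\bigcup_nY_n$; the one step that needs extra care is the reduction to pairwise disjoint $Y_t$, since $\cW$ need not be well-orderable --- here I would partition $\bigcup_tY_t$ into the nonempty fibers of $s\mapsto\{t\in\cW:s\in Y_t\}$ and pick one representative index in each fiber (the fibers meeting no $t$ with $Y_t\in D^+$ being $D$-null), and otherwise the induction goes through verbatim. The main obstacle --- and essentially the only real content --- is the successor case: manufacturing a \emph{single} $g<_Df$ whose ${\rm rk}_{D+Y_n}$ is $\ge\delta$ for every relevant $n$ at once. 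Disjointness of the $Y_n$ is exactly what makes the glued $g$ inherit ${\rm rk}_{D+Y_n}(g)={\rm rk}_{D+Y_n}(g_n)$ on the nose, and the completeness of $D$ is exactly what keeps $g$ below $f$ and keeps the set of relevant $n$'s nonempty.
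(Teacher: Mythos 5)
Your treatment of part (1) is correct and is, in fact, a genuine simplification of the paper's argument: by disjointifying to $Y_n\setminus\bigcup_{m<n}Y_m$ (which is available choice-free because $\omega$ is well-ordered), the glued $g$ literally agrees with $g_n$ on $Y_n$, so $\mathrm{rk}_{D+Y_n}(g)=\mathrm{rk}_{D+Y_n}(g_n)$ falls out of \ref{z0.24}(1) with no further work, and a single application of $(\star)_\delta$ closes the successor step. The paper does not disjointify; it defines $g(s)=\min\{g_u(s):u\in\cW_*,\ s\in Y_u\}$ (a choice-free construction, since one is minimizing a nonempty set of ordinals), and then has to pay for the overlap with the auxiliary cover $\langle Y_{v,u}:u\in\cW_*\rangle$ of each $Y_v$ and a \emph{second} application of the induction hypothesis at level $\beta$ to the filter $D+Y_v$. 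For part (1) your route is cleaner.

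The gap is in your part (2). You cannot in general disjointify the $Y_t$'s with only $\mathrm{AC}_\cW$. The fiber construction you sketch---pass to the sets $C_F=\{s:\{t\in\cW:s\in Y_t\}=F\}$ and ``pick one representative index in each fiber''---needs a choice function selecting some $t_F\in F$ for every nonempty $F\subseteq\cW$ that occurs as a fiber value. That is an instance of $\mathrm{AC}$ for a family indexed by a subset $\cS\subseteq\cP(\cW)\setminus\{\emptyset\}$, which does not follow from $\mathrm{AC}_\cW$; nor does indexing the disjoint cover directly by $\cS$ help, since you then need $\mathrm{AC}_\cS$ together with $(\le\cS)$-completeness of $D$, and neither is given (one only knows that $\bigcup_tY_t$ surjects onto $\cS$, and $\mathrm{AC}_Y$ is not assumed). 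Moreover, once the $Y_t$ overlap, your gluing $g\restriction Y_t=g_t\restriction Y_t$ is not even well-defined, so there is no way around some replacement for disjointness. This is precisely where the paper's $\min$-construction earns its keep: it produces a single $g<_Df$ without any choice, and then recovers $\mathrm{rk}_{D+Y_v}(g)\ge\beta$ by covering $Y_v$ with the definable pieces $Y_{v,u}=\{t\in Y_u\cap Y_v: g(t)=g_u(t)\}$, on each of which $g=g_u$, and applying the inductive statement $(\star)_\beta$ once more to $(D+Y_v,\ g,\ \langle Y_{v,u}\rangle_u)$. To repair your part (2), drop the disjointification and substitute the paper's $\min$-and-$Y_{v,u}$ device in the successor step; the rest of your skeleton then goes through.
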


\begin{PROOF}{\ref{z0.25}}
Like \cite{Sh:71}.

\noindent
1) By part (2).

\noindent
2) Note that by AC$_{\cW}$ necessarily $\{t:Y_t \in D^+\}$ is non-empty.
The inequality $\le$ is obvious (i.e. by \ref{z0.24}(2)).  We prove by
induction on the ordinal $\alpha$ that $(\forall v \in {\cW})
[Y_v \in D^+ \Rightarrow \text{ rk}_{D+Y_v}(f) \ge \alpha]
\Rightarrow \text{ rk}_D(f) \ge \alpha$.

For $\alpha = 0$ and $\alpha$ is limit this is trivial.  

For $\alpha = \beta +1$, we assume $(\forall v \in {\cW})
[Y_v \in D^+ \Rightarrow
\text{ rk}_{D+Y_v}(f) \ge \alpha > \beta]$ so by Definition
\ref{z0.21} it follows that $[v \in {\cW} \wedge Y_v \in D^+ 
\Rightarrow (\exists g)(g \in {}^Y\text{Ord } \wedge 
g <_{D+Y_v} f \wedge \text{ rk}_{D+Y_v}(g) \ge
\beta]$ hence, if $v \in \cW \wedge Y_v \in D^+$ then 
$\{t \in Y:f(t)=0\} = \emptyset$ mod $(D + Y_v)$,
i.e. $\{v:f(v) = 0\} \cap Y_v = \emptyset$ mod $D$.  As this holds for
every $v \in \cW$ and $D$ is $|\cW|^+$-complete clearly
 we have $\{t \in Y:f(t) = 0\} = \emptyset$ mod $D$.  
We can by \ref{z0.24}(1) replace $f$ by $f' \in {}^Y\text{Ord}$ when $\{v \in
Y:f(v) = f'(v)\} \in D$ so \wilog \, $t \in Y \Rightarrow f(t) > 0$.

But ${\cW} \in \bold C$, hence by \ref{z0.23}(2A) 
there is a sequence $\langle g_v:v
\in {\cW}_*\rangle$ such that ${\cW}_* := \{v \in {\cW}:Y_v
\in D^+\}$ and $g_v \in {}^Y\text{Ord},g_v <_{D+Y_v} f,
\text{rk}_{D+Y_v}(g_v) \ge \beta$ and $t \in Y \Rightarrow g_v(t) <
f(t)$ so $g_v < f$.

As $D$ is $|{\cW}|^+$-complete necessarily $Y_* :=
\cup\{Y_v:v \in {\cW} \backslash {\cW}_*\} = \emptyset$ mod $D$,
but $\cup\{Y_v:v \in {\cW}\} \in D$ hence $Y_* = \cup\{Y_v:v \in
{\cW}_*\}$ belongs to $D$.
Define $g \in {}^Y\text{Ord}$ by $g(s) = \text{ min}\{g_u(s):u \in
{\cW}_*$ satisfies $s \in Y_u\}$ if $s \in Y_*$ and 0 
if $s \in Y \backslash Y_*$.

Hence $(\cup\{Y_v:v \in {\cW}_*\}) \in D$ and
$g \in {}^Y\text{Ord}$ and $g <_D f$ (and even $g < f$) so by the induction
hypothesis
\medskip

\noindent
\begin{enumerate}
\item[$\odot$]    it suffices to prove $v \in {\cW}_* \Rightarrow
\text{ rk}_{D+Y_v}(g) \ge \beta$.
\end{enumerate}
\medskip

\noindent
Fix $v \in {\cW}_*$, and for each $u \in {\cW}_*$ let
$Y_{v,u} := \{t \in Y_u \cap Y_v:g(t)= g_u(t)\}$ so by the choice of
$g(t)$ we have
\medskip

\noindent
\begin{enumerate}
\item[$\boxplus_1$]   if $v \in \cY_*,t \in Y_y$ \then \, for
some $u \in \cW_*$ we have $t \in Y_{y,x} \subseteq Y_y$ and $g(t)= g_u(t)$.
\end{enumerate}

Hence
\medskip

\noindent
\begin{enumerate}
\item[$\boxplus_2$]  $\langle Y_{v,u}:u \in {\cW}_*\rangle$ 
exists and $\cup\{Y_{v,u}:u \in {\cW}_*\} = Y_v \in (D+Y_v)$.
\end{enumerate}

Now
\medskip

\noindent
\begin{enumerate}
\item[$\boxplus_3$]   if $u \in {\cW}_* \wedge
Y_{v,u} \in (D + Y_v)^+$ then rk$_{D+Y_{v,u}}(g) \ge \beta$.
\end{enumerate}
\medskip

\noindent
[Why?  By the choice of $Y_{v,u}$ we have 
$g = g_u$ mod$(D+Y_{v,u})$ hence rk$_{D+Y_{v,u}}(g) =
\text{ rk}_{D+Y_{v,u}}(g_u)$, also $Y_{v,u} \subseteq Y_u$ hence
$D+Y_{v,u} \supseteq D + Y_u$ which by \ref{z0.24}(2) implies
rk$_{D+Y_{v,u}}(g_u) \ge \text{ rk}_{D+Y_u}(g_u)$ which is $\ge
\beta$.  Together we are done.]

By $\boxplus_2 + \boxplus_3$ and the induction hypothesis it follows
that $v \in {\cW}_* \Rightarrow \text{ rk}_{D+y_v}(g) \ge \beta$ so
by $\odot$ we are done. 
\end{PROOF}

\begin{definition}
\label{z0.27}   For $Y,D,f$ in \ref{z0.21} 
let $J[f,D] =: \{Z \subseteq Y:Y \backslash Z \in D$ 
or $(Y \backslash Z) \in D^+ \wedge \text{ rk}_{D+Z}(f) > \text{ rk}_D(f)\}$. 
\end{definition}

\begin{claim}
\label{z0.29}   [\rm{DC+AC}$_{\cY}$]  Assume $D$ 
is an $\aleph_1$-complete $|{\cY}|^+$-complete filter on $Y$.

\noindent
1) If $f \in {}^Y{\text{\rm Ord\/}}$ \then \, $J[f,D]$ is an
$\aleph_1$-complete and $|{\cY}|^+$-complete ideal on $Y$. 

\noindent
2) If $f_1,f_2 \in {}^Y\text{Ord}$ and $J = J[f_1,D] = J[f_2,D]$
\und{then} {\rm rk}$_D(f_1) < \text{\rm rk}_D(f_2) 
\Rightarrow f_1 < f_2$ mod $J$ and {\rm rk}$_D(f_1) = \text{\rm
rk}_D(f_2) \Rightarrow f_1 = f_2$ mod $J$.

\noindent
3) If $f \in {}^Y\text{\rm Ord}$ is e.g. constantly 1 then $J[f,D] =
   \text{\rm dual}(D)$.

\noindent
4) If $f \in {}^Y\text{Ord}$ and $A \in (J[f,D])^+$ \then \, ($A \in
   D^+$ and) {\rm rk}$_{D+A}(f) = \text{\rm rk}_D(f)$.
\end{claim}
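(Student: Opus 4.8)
The plan is to establish the four clauses in the order (1), (4), (3), (2): clause (1) carries essentially all of the work, and clauses (2)--(4) follow from it together with the monotonicity and rank-arithmetic facts of \ref{z0.24} and \ref{z0.23}.

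For clause (1), all the ideal axioms are quick except closure under unions. That $\emptyset \in J[f,D]$ and $Y \notin J[f,D]$ is immediate from properness of $D$, and downward closure is immediate from \ref{z0.24}(2): if $Z' \subseteq Z \in J[f,D]$ and $Y \setminus Z' \notin D$ then $D + Z' \supseteq D + Z$, so ${\rm rk}_{D+Z'}(f) \ge {\rm rk}_{D+Z}(f) > {\rm rk}_D(f)$. Closure under countable unions, and more generally under unions indexed by any $\cW$ with $|\cW| \le |\cY|$, is proved the same way, via Claim \ref{z0.25}(2) --- with ${\rm AC}_{\aleph_0}$ supplied by DC in the countable case and ${\rm AC}_\cW$ (which follows from ${\rm AC}_\cY$) in general; finite unions then come for free. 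Given $\langle Z_t : t \in \cW \rangle$ with all $Z_t \in J[f,D]$, put $Z = \bigcup_t Z_t$; if $Y \setminus Z \in D$ there is nothing to prove, so assume $Y \setminus Z \notin D$, so that $D + Z$ is a proper filter, and aim for ${\rm rk}_{D+Z}(f) > {\rm rk}_D(f)$. Since $D + Z$ is again $\aleph_1$-complete and $|\cY|^+$-complete and $\bigcup_t Z_t = Z \in D + Z$, Claim \ref{z0.25}(2) applies to $D + Z$ and expresses ${\rm rk}_{D+Z}(f)$ as the minimum of ${\rm rk}_{(D+Z)+Z_t}(f)$ over the non-empty set of $t$ with $Z_t \in (D+Z)^+$. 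For such a $t$, since $Z_t \subseteq Z$ we have $(D+Z)+Z_t = D + Z_t$ and $Z_t \in (D+Z)^+ \Leftrightarrow Z_t \in D^+$; and then $Z_t \in J[f,D]$ with its first disjunct failing forces ${\rm rk}_{D+Z_t}(f) > {\rm rk}_D(f)$ by Definition \ref{z0.27}. Hence the minimum exceeds ${\rm rk}_D(f)$, i.e.\ $Z \in J[f,D]$.

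Clause (4) is unwinding: $A \in (J[f,D])^+$ says $A \notin J[f,D]$, hence $Y \setminus A \notin D$ (so $A \in D^+$ and $D+A$ is proper) while ``${\rm rk}_{D+A}(f) > {\rm rk}_D(f)$'' fails; since $D \subseteq D + A$ already gives ${\rm rk}_{D+A}(f) \ge {\rm rk}_D(f)$ by \ref{z0.24}(2), equality follows. For clause (3), if $f$ is constantly $1$ then ${\rm rk}_{D'}(f) = 1$ for every proper $\aleph_1$-complete filter $D'$ on $Y$ (by \ref{z0.23}(4), which gives that the constant $0$ function has rank $0$, and \ref{z0.23}(5)); so the second disjunct of Definition \ref{z0.27} can never hold, leaving $J[f,D] = \{Z : Y \setminus Z \in D\} = {\rm dual}(D)$. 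For clause (2), write $J = J[f_1,D] = J[f_2,D]$. If ${\rm rk}_D(f_1) < {\rm rk}_D(f_2)$, put $A = \{s : f_1(s) \ge f_2(s)\}$ and suppose $A \in J^+$; clause (4) applied to $f_1$ gives $A \in D^+$ and ${\rm rk}_{D+A}(f_1) = {\rm rk}_D(f_1)$, and applied to $f_2$ gives ${\rm rk}_{D+A}(f_2) = {\rm rk}_D(f_2)$, while $f_2 \le_{D+A} f_1$ gives ${\rm rk}_{D+A}(f_2) \le {\rm rk}_{D+A}(f_1)$ by \ref{z0.24}(1), contradicting ${\rm rk}_D(f_1) < {\rm rk}_D(f_2)$; hence $A \in J$, i.e.\ $f_1 < f_2 \bmod J$. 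If ${\rm rk}_D(f_1) = {\rm rk}_D(f_2)$, the same argument with $A = \{s : f_1(s) > f_2(s)\}$ --- now invoking \ref{z0.23}(3) (legitimate since ${\rm rk}_{D+A}(f_2) < \infty$ by DC and \ref{z0.23}(1)) to get the strict inequality ${\rm rk}_{D+A}(f_2) < {\rm rk}_{D+A}(f_1)$ --- shows $\{f_1 > f_2\} \in J$; by symmetry $\{f_1 < f_2\} \in J$, so $\{f_1 \ne f_2\} \in J$ by clause (1).

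The one step needing care is the union-closure in clause (1): one must get the completeness bookkeeping right so that Claim \ref{z0.25}(2) applies to $D + Z$ rather than to $D$, correctly identify $(D+Z)+Z_t$ with $D + Z_t$ and the positivity condition $Z_t \in (D+Z)^+$ with $Z_t \in D^+$, and check that the index set of the minimum is non-empty. Once these identifications and the already-proved monotonicity of rank are in place, the rest is routine.
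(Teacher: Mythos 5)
Your proof is correct and takes essentially the same route as the paper: clause (1) via Claim \ref{z0.25}(2) applied to $D+Z$ (the paper just cites \ref{z0.25}, you spell out the bookkeeping with $(D+Z)+Z_t = D+Z_t$), clause (4) read off Definition \ref{z0.27} with \ref{z0.24}(2), and clause (2) by applying clause (4) to the sets where $f_1 \ge f_2$ (resp. $f_1 > f_2$) together with \ref{z0.24}(1)/\ref{z0.23}(3) — which is the paper's three-way partition argument run contrapositively. The paper proves (4) inside its proof of (2) rather than separately; otherwise the two proofs match.
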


\begin{PROOF}{\ref{z0.29}}
1) By \ref{z0.25}.

\noindent
2) As $J$ is an ideal on $Y$ (by part (1)) this 
should be clear by the definitions; that is, let $A_0 := \{t \in
Y:f_1(t) < f_2(t)\},A_1 := \{t \in Y:f_1(t) = f_2(t)\}$ and $A_2 :=
\{t \in Y:f_1(t) > f_2(t)\}$.  Now $\langle A_0,A_1,A_0\rangle$ is a
partition of $Y$.  

First, assume $A_0 \in J^+$, then by the definition of
$J[f_1,D]$ we have $\neg(\text{rk}_D(f_1) < \text{
rk}_{D+A_0}(f_1))$;  i.e. rk$_{D+A_0}(f_1) \le \text{ rk}_D(f_1)$ and
so by \ref{z0.24}(2) we have rk$_D(f_1) = \text{ rk}_{D+A_0}(f_1)$.
Now as $A_0 \in J^+$, by the choice of $A_0,f_1 <_{D+A_0} f_2$ hence
rk$_D(f_1) = \text{ rk}_{D+A_0}(f_1) 
< \text{ rk}_{D+A_0}(f_2) = \text{ rk}_D(f_2)$.

\noindent
[Why?  By the previous sentence, by \ref{z0.23}(3), by the previous
sentence respectively.]

Second, similarly if $A_2 \in J^+$ then $f_2 < f_1$ mod $(D+A_2)$ and
rk$_D(f_1) > \text{ rk}_D(f_2)$.  

Lastly, if $A_1 \in J^+$ then by \ref{z0.24}(1)
$f_1 = f_2$ mod $(D+A_1)$ hence rk$_{D+A_1}(f_1) = \text{ rk}_{D+A_1}(f_2)$ and
rk$_D(f_1) = \text{ rk}_{D+A_1}(f_2) = \text{ rk}_{D+A_1}(f_2) = 
\text{ rk}_D(f_2)$.

By the last three paragraphs at most one of $A_0,A_1,A_2$ belongs to
$J^+$ and as $A_0 \cup A_1 \cup A_2 = Y$ at least one of $A_0,A_1,A_2$
belongs to $J^+$, so easily we are done.

\noindent
3)  Obvious. 

\noindent
4) Proved inside the proof of part (2).
\end{PROOF}

\begin{definition}
\label{z0.32}  
1) Let FIL$^{\text{cc}}_S(Y)$ or
FIL$^{\text{pcc}}_S(Y)$ be the set of $D$ such that:
\smallskip

\noindent
$D$ is a filter on the set $Y$ which is $|S|^+$-complete and is
$\aleph_1$-complete or is psuedo $|S|^+$-complete and psuedo 
$\aleph_1$-complete.

\noindent
2) Let FIL$_{\text{cc}}(Y)$ or FIL$_{\text{pcc}}(Y)$ be
FIL$^{\text{cc}}_\emptyset$ or  FIL$^{\text{pcc}}_\emptyset$.

\noindent
3) Omitting $Y$ means for some $Y$ and then we let $Y = \text{
   Dom}(D)$.  Without enough choice, the minimal $(\le S)$-complete
   filter extending a filter $D$ is gotten in stages.
\end{definition}

\begin{definition}
\label{z0.51}  
1) For a filter $D$ on $Y$ and set $S$ we define 
comp$_{S,\gamma}(D)$ by induction on $\gamma \in
\text{ Ord } \cup\{\infty\}$.
\medskip

\noindent
\und{$\gamma = 0$}:  comp$_{S,\gamma}(D) = D$
\medskip

\noindent
\und{$\gamma =$ limit}:  comp$_{S,\gamma}(D) =
\cup\{\text{comp}_{S,\beta}(D):\beta < \gamma\}$
\medskip

\noindent
\und{$\gamma = \beta +1$}:  comp$_{S,\gamma}(D) = \{A \subseteq Y:A$
belongs to comp$_{S,\beta}(D)$ or 
include the intersection of some $S$-sequence of members of 
comp$_{S,\beta}(D)$, i.e. $\cap\{A_s:s \in S\}$, where 
$\langle A_s:s \in S\rangle$ is a
sequence of members of comp$_{S,\beta}(D)\}$.

\noindent
2) Similarly for a family ${\cS}$ of sets replacing $S$ by ``some
member of ${\cS}$", e.g. we define com$_{\in {\cS},\gamma}(D)$ by induction 
on $\gamma$ using $(\in {\cS})$-sequences, i.e. $S$-sequence for
some $S \in {\cS}$.

\noindent
3) If $\gamma = \infty$ we may omit it.  We say that $D$ is a pseudo
   $(\le S,\gamma)$-complete 
when $\emptyset \notin \text{ comp}_{S,\gamma}(D)$.  
\end{definition}

\begin{observation}
\label{z0.55}   
1) If $D$ is a filter on $Y$ and $S$ is a set, \und{then}:
\medskip

\noindent
\begin{enumerate}
\item[$(a)$]   $\langle\text{\rm comp}_{S,\gamma}(D):\gamma \in
\text{\rm Ord } \cup \{\infty\}\rangle$ is an $\subseteq$-increasing 
sequence of filters of $Y$ (starting with $D$)
\smallskip

\noindent
\item[$(b)$]   if {\rm comp}$_{S,\gamma +1}(D) = 
\text{\rm comp}_{S,\gamma}(D)$ then for every $\beta \ge \gamma$ we have
{\rm comp}$_{S,\beta}(D) = \text{\rm comp}_{S,\gamma}(D)$
\smallskip

\noindent
\item[$(c)$]   there is an ordinal $\gamma = \gamma_S(D) <
\theta({\cP}(Y))$ such that {\rm comp}$_{S,\gamma}(D) = 
\text{\rm comp}_{S,\gamma +1}(D)$ and 
$\langle\text{\rm comp}_{S,\beta}(D):\beta
\le \gamma\rangle$ is strictly $\subset$-increasing.
\end{enumerate}
\medskip

\noindent
2) Assume AC$_S$.
\und{Then} for any filter $D$ on $Y$ we have $\gamma_S(D) \le \theta$
when $\theta := \text{\rm min}\{\lambda:\lambda$ a cardinal such that
cf$(\lambda) \ge \theta(S)\}$.

\noindent
3) Assume DC + AC$_S + |S \times S| = |S|$.  Then for any filter $D$
on $Y$ we have $\gamma_S(D) \le 1$ and comp$_{S,1}(D)$ is an $(\le
S)$-complete filter or is $\cP(Y)$; the latter holds iff $D$ is not
pseudo $(\le S)$-complete.

\noindent
4) Similarly to part (2) for ``$\in {\cS}$" but AC$_S$ is replaced by 
$S \in \mathscr{S} \Rightarrow \text{\rm AC}_S$ and 
$\theta = \text{\rm min}
\{\kappa:\kappa$ regular and $S \in \mathscr{S} \Rightarrow \kappa 
\ge \theta(\mathscr{S})\}$.
\end{observation}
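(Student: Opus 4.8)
The plan is to treat all four parts through one observation: $\langle\text{comp}_{S,\gamma}(D):\gamma\in\text{Ord}\cup\{\infty\}\rangle$ is the transfinite orbit of $D$ under the monotone operation ``adjoin every superset of an intersection of an $S$-sequence of members'', and the content is (a) that the orbit consists of filters, (b) that it is eventually constant, (c)--(4) locating where. For part (1)(a) I would induct on $\gamma$: at $0$ there is nothing to prove; at a limit $\gamma$ the value is the union of a $\subseteq$-increasing chain of filters on $Y$, hence a filter (upward closure is immediate, and any two members already lie in a common earlier term, so their intersection does too); at $\gamma=\beta+1$ upward closure is built into Definition \ref{z0.51}(1), and if $A\supseteq\bigcap_{s\in S}A_s$ and $B\supseteq\bigcap_{s\in S}B_s$ with all $A_s,B_s\in\text{comp}_{S,\beta}(D)$ then $A\cap B\supseteq\bigcap_{s\in S}(A_s\cap B_s)$ with each $A_s\cap B_s\in\text{comp}_{S,\beta}(D)$ by induction, so $A\cap B\in\text{comp}_{S,\beta+1}(D)$; $\subseteq$-monotonicity in $\gamma$ is a parallel induction. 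For (1)(b), if stage $\gamma+1$ equals stage $\gamma$ then the recursion clause producing stage $\gamma+2$ from stage $\gamma+1$ is literally the clause producing stage $\gamma+1$ from stage $\gamma$, so stage $\gamma+2$ equals stage $\gamma+1$, and an induction on $\beta\ge\gamma$ (limits being unions of eventually constant chains) finishes. For (1)(c), let $\gamma_S(D)$ be the least $\gamma$ with $\text{comp}_{S,\gamma+1}(D)=\text{comp}_{S,\gamma}(D)$; strictness of $\langle\text{comp}_{S,\beta}(D):\beta\le\gamma_S(D)\rangle$ below $\gamma_S(D)$ is forced by (1)(b) together with monotonicity at limits. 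For the bound, define $F:{\cP}(Y)\to\gamma_S(D)+1$ by $F(A)=\min\{\gamma:A\in\text{comp}_{S,\gamma}(D)\}$ when such $\gamma$ exist (this minimum is never a limit, since limit stages are unions) and $F(A)=\gamma_S(D)$ otherwise; for each $\beta<\gamma_S(D)$ any $A\in\text{comp}_{S,\beta+1}(D)\setminus\text{comp}_{S,\beta}(D)$ — nonempty by strictness — has $F(A)=\beta+1$, so the range of $F$ contains a well-ordered set of order type $\gamma_S(D)$. Were $\gamma_S(D)\ge\theta({\cP}(Y))$, modifying $F$ off that set and composing with the collapse would yield a surjection of ${\cP}(Y)$ onto $\theta({\cP}(Y))$, against Definition \ref{z0.15}(1); hence $\gamma_S(D)<\theta({\cP}(Y))$.

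For part (2), it suffices to show $\text{comp}_{S,\theta}(D)$ is already $(\le S)$-complete (or is ${\cP}(Y)$), for then stage $\theta+1$ equals stage $\theta$, so $\gamma_S(D)\le\theta$. Let $\langle A_s:s\in S\rangle\in{}^S(\text{comp}_{S,\theta}(D))$. Since $\theta$ is an infinite cardinal, $\text{comp}_{S,\theta}(D)=\bigcup_{\gamma<\theta}\text{comp}_{S,\gamma}(D)$, so $\gamma_s:=\min\{\gamma:A_s\in\text{comp}_{S,\gamma}(D)\}<\theta$; the sequence $\langle\gamma_s:s\in S\rangle$ is definable from the data, hence exists, and I claim $\gamma^\ast:=\sup_s\gamma_s<\theta$. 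Otherwise $\{\gamma_s:s\in S\}$ is cofinal in $\theta$, so has order type $\ge\cf(\theta)\ge\theta(S)$, and $s\mapsto\gamma_s$ maps $S$ onto it; collapsing and truncating gives a surjection of $S$ onto $\theta(S)$, which is impossible. Then each $A_s\in\text{comp}_{S,\gamma^\ast}(D)$, so $\bigcap_{s\in S}A_s\in\text{comp}_{S,\gamma^\ast+1}(D)\subseteq\text{comp}_{S,\theta}(D)$. The hypothesis AC$_S$ enters here only to make the cardinal $\theta$ of the statement legitimate (that some cardinal has cofinality $\ge\theta(S)$, so that the $\sup$-estimate has a target); in plain ZF a Hartogs-type cardinal like $\theta(S)$ can be singular and the regular cardinals can be bounded, so this is the point needing care.

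For part (3), one pins this down at the first step. Fix, using $|S\times S|=|S|$, a bijection $h:S\to S\times S$. To see $\text{comp}_{S,1}(D)$ is $(\le S)$-complete, take $\langle B_t:t\in S\rangle\in{}^S(\text{comp}_{S,1}(D))$ and use AC$_S$ to choose, for each $t$, an $S$-sequence $\langle A^t_s:s\in S\rangle\in{}^SD$ with $\bigcap_{s\in S}A^t_s\subseteq B_t$ (take the constant sequence $B_t$ if $B_t\in D$; otherwise $B_t$ already witnesses $B_t\in\text{comp}_{S,1}(D)$). Then $\bigcap_{t\in S}B_t\supseteq\bigcap_{(t,s)\in S\times S}A^t_s$, and reindexing this $S\times S$-intersection along $h$ exhibits it as an intersection of an $S$-sequence of members of $D$; hence $\bigcap_{t\in S}B_t\in\text{comp}_{S,1}(D)$. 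Thus $\text{comp}_{S,2}(D)=\text{comp}_{S,1}(D)$ and $\gamma_S(D)\le1$. Finally, $\text{comp}_{S,1}(D)={\cP}(Y)$ iff $\emptyset\in\text{comp}_{S,1}(D)$, which (as $D$ is a filter) holds iff $\bigcap_{s\in S}A_s=\emptyset$ for some $\langle A_s:s\in S\rangle\in{}^SD$, i.e.\ iff $D$ is not pseudo $(\le S)$-complete in the sense of \ref{z0.11}(1A); and when $D$ is pseudo $(\le S)$-complete, $\text{comp}_{S,1}(D)$ is a proper $(\le S)$-complete filter by part (1)(a) and the computation just made. (DC is among the standing hypotheses of the section but is not otherwise used here.)

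Part (4) comes from part (2) by reading ``$S$-sequence'' as ``$S$-sequence for some $S\in\mathscr{S}$'' throughout: one shows $\text{comp}_{\in\mathscr{S},\theta}(D)$ is already closed under intersections of such sequences — so its orbit stabilizes by stage $\theta$ — the only new point being that, for $S\in\mathscr{S}$ and an $S$-sequence drawn from $\text{comp}_{\in\mathscr{S},\theta}(D)$, the entry stages $\gamma_s<\theta$ satisfy $\sup_s\gamma_s<\theta$ because $\theta$ is regular and $\ge\theta(S)$ (again a surjection of $S$ onto a cofinal subset of $\theta$ would give a surjection of $S$ onto $\theta(S)$). Here AC$_S$ for $S\in\mathscr{S}$ and the regularity of $\theta$ (with $\theta\ge\theta(\mathscr{S})$) play the roles of the corresponding hypotheses in part (2). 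The genuinely delicate point of the whole argument is not the combinatorics, which is essentially bookkeeping, but verifying — under ZF together with the named fragments of choice — that the relevant stabilization cardinal ($\theta({\cP}(Y))$, $\theta$, or $1$) exists and that the cofinality and collapse arguments are valid; deciding exactly which consequence of AC$_S$ (and, in part (3), of $|S\times S|=|S|$) is actually invoked is where the care goes.
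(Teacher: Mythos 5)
Your proof is correct. Parts (1) and (3) follow essentially the paper's own arguments: in (1)(c) your $F$ plays the same role as the paper's surjection $h:\cP(Y)\to\gamma_*$ (the paper writes $\theta(\cP(A))$ where it means $\theta(\cP(Y))$), and in (3) your $S\times S$ reindexing is exactly what the paper's terse ``replace $\bold x\in\cT^2_2$ by $\bold x\in\cT^2_1$'' is gesturing at. For parts (2) and (4), however, you take a genuinely different and substantially shorter route. The paper builds a machinery of finite-depth approximation trees $\cT^1_n,\cT^2_{\gamma,n},\cT^3_\gamma$, proves a characterization $\circledast$ of membership in $\text{comp}_{\in\mathscr{S},\gamma}(D)$ in terms of $\omega$-branches (invoking AC$_S$ to choose tree-witnesses in one direction of the equivalence), and then derives $\gamma_S(D)\le\theta$ from the well-foundedness of such a tree via an explicit ordinal-rank labelling $\gamma'_\eta$. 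You instead observe directly that the limit stage $\text{comp}_{S,\theta}(D)$ is already $(\le S)$-complete: the canonical entry-stage function $s\mapsto\gamma_s$ is a minimum and so requires no choice, and it must have supremum $<\theta$, since otherwise its image would be cofinal in $\theta$ of order type $\ge\cf(\theta)\ge\theta(S)$, yielding a surjection of $S$ onto $\theta(S)$. Your version makes it visible that AC$_S$ plays no role in the core combinatorics of (2)/(4) — which the paper's tree argument obscures — while agreeing with the paper that the one delicate point, namely the existence of $\theta$ under the stated choice fragment, is where the real care is needed; you flag this explicitly, and the paper does not resolve it either.
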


\begin{remark}
Note that in part (2) of \ref{z0.55}, $\theta$ is regular and $\theta
\le \theta({}^{\omega >}S)$ but the inverse is not true, if $\theta(S)
= \aleph_0$ but holds if $\theta(S) > \aleph_0$.
\end{remark}

\begin{PROOF}{\ref{z0.55}}
 We prove the versions with $\mathscr{S}$, i.e. for (4).
Let $D_\gamma = \text{ comp}_{\in \mathscr{S},\gamma}(D)$ for $\gamma
\in \text{ Ord}$.

\noindent
1) Clause (a) is by the definition; clause (b) is proved by induction
on $\beta \ge \gamma$, for $\beta = \gamma$ this is trivial, for $\beta
= \gamma +1$ use the assumption and for $\beta > \gamma +1$ use the
definition and the induction hypothesis. 
As for clause (c) let $\gamma_* = \text{ min}\{\gamma \in
\text{ Ord} \cup \{\infty\}$; if 
$\gamma < \infty$ then $D_\gamma = D_{\gamma +1}\}$, 
so $\langle D_\gamma:\gamma \le \gamma_*\rangle$
is $\subset$-increasing continuous by clause (a), and by clause (b), $\langle
D_\gamma:\gamma \ge \gamma_*\rangle$ is constant.  Now define $h:\cP(Y)
\rightarrow \gamma_*$ by: $A \in D_{\gamma +1} \backslash D_\gamma \Rightarrow
h(A) = \gamma$ and $h(A) = 0$ when there is no such $\gamma$.  
So $h$ is onto $\gamma_*$
hence $\gamma_* < \theta(\cP(A))$ so $\gamma_*$ is as required on
$\gamma_S(D)$. 

\noindent
2) We prove also the relevant statement in part (4), so $S \in
\mathscr{S} \Rightarrow \text{ AC}_S \wedge \text{ cf}(\theta) \ge \theta(S)$.
Let $\gamma$ be an ordinal.

Let

\begin{equation*}
\begin{array}{clcr}
\cT^1_n = \{\Lambda:&\Lambda \text{ is a set of sequences of 
length } \le n, \\
  &\text{ closed under initial segments such that for every
non-maximal } \eta \in \Lambda \\
  &\text{ for some } S \in \mathscr{S} \text{ we have} \\
  &\eta \char 94 \langle s \rangle \in \Lambda \Leftrightarrow s \in S\}.
\end{array}
\end{equation*}

\begin{equation*}
\begin{array}{clcr}
{\cT}^2_{\gamma,n} = \{\bold x:&(a) \quad \bold x 
\text{ has the form } \langle Y_\eta,\gamma_\eta:\eta \in
\Lambda\rangle \\
  &(b) \quad \Lambda \in \cT^1_n \text{ and } Y_\eta \subseteq Y
\text{ for } \eta \in \Lambda\\ 
  &(c) \quad Y_\eta = \cap\{Y_{\eta \char 94 <s>}:s \text{ satisfies }
   \eta \char 94 \langle s \rangle \in \Lambda\} \text{ if } \eta \in
\Lambda \\
  &\qquad \text{ but } \eta \text{ is not } 
\triangleleft\text{-maximal in } \Lambda \\
  &(d) \quad \eta \triangleleft \nu \in \Lambda \Rightarrow 
\gamma_\nu < \gamma_\eta < 1 + \gamma \\
  &(e) \quad Y_\eta \in D \text{ if } \eta \in 
\Lambda \text{ is } \triangleleft\text{-maximal in } \Lambda \\
  &\qquad \text{ but } \ell g(\eta) < n\}
\end{array}
\end{equation*}

\[
\cT^2_n = \cup\{\cT^2_{\gamma,n}:\gamma \text{ is an ordinal}\}.
\]
\medskip

\noindent
Let $\bold n(\bold x) = n$ for the minimal possible $n$ such that
$\bold x \in {\cT}^2_n$ and let $\bold x = \langle Y^{\bold
x}_\eta,\gamma^{\bold x}_\eta:\eta \in \Lambda_{\bold x}\rangle$.

Let ${\cT}^3_\gamma = \cup\{{\cT}^2_{\gamma,n}:n < \omega\}$ and
let $<_*$ be the natural order on ${\cT}^3_\gamma:\bold x <_* \bold
y$ \und{iff} $n(\bold x) < n(\bold y),\Lambda_{\bold x} = \Lambda_{\bold y}
\cap {}^{n(\bold x)\ge}(\cup\{S:S \in \mathscr{S}\})$ 
and $(Y^{\bold x}_\eta,\gamma^{\bold x}_\eta) = 
(Y^{\bold y}_\eta,\gamma^{\bold y}_\eta)$ for $\eta
\in \Lambda_{\bold x}$.

Now 
\begin{enumerate}
\item[$\circledast$]  $A \in D_\gamma$ iff there
is an $\omega$-branch $\langle \bold x_n:n < \omega\rangle$ of 
$({\cT}^3_\gamma,<_*)$ such that $Y^{\bold x_n}_{<>} = A$.
\end{enumerate}
\medskip

\noindent
[Why?  We prove it by induction on the ordinal $\gamma$.  For
$\gamma=0$ and $\gamma$ limit this is obvious so assume we have it for
$\gamma$ and we shall prove it for $\gamma +1$.  

First assume $A \in D_{\gamma +1}$ and we shall find such
$\omega$-branch; if $A \in D_\gamma$ this is obvious, 
otherwise there are $S \in
\mathscr{S}$ and a sequence $\langle A_s:s \in S \rangle$
of members of $D_\gamma$ such that $A = \cap\{A_s:s \in S\}$.  So 
$X_s := \{\bar{\bold x}:\bar{\bold x}$ witness $A_s \in D_\gamma\}$ is 
well defined and non-empty by the induction hypothesis, clearly the
sequence $\langle X_s:s \in S\rangle$ exists, hence we can use 
AC$_S$ to choose $\langle \bar{\bold x}_s:s \in S\rangle$ satisfying
$\bar{\bold x}_s \in X_s$.

Now define $\bar{\bold x} = \langle \bold x_n:n < \omega\rangle$ as
follows:  $\Lambda_{\bold x_n} = \{\langle \rangle\} \cup \{\langle
s\rangle \char 94 \eta:\eta \in \Lambda_{\bold x_s,n-1}$ and $s \in
S\},\gamma^{\bold x_n}_{<>} = \cup\{\gamma^{\bold x_{s,n}}_{<>} +1:s \in
S\}$ and $Y^{\bold x}_{<>} = A$ and $Y^{\bold x_n}_{<s>\char 94 \eta} =
Y^{\bold x_{s,n-1}}_\eta$.  Now check.

Second, assume that there is such $\omega$-branch
$\langle \bold x_n:n < \omega\rangle$ of $(\cT^3_\gamma,<_*)$ such that
$Y^{\bold x_n}_{<>} = A$.  Let $S = \{\eta(0):\eta \in \Lambda_{\bold
x_1}\}$ so necessarily $S \in \mathscr{S}$.  For each $n < \omega$ and
$s \in S$ we define $\bold y_{n,s}$ as follows: 
$\Lambda^{\bold y_{n,s}}_s =
\{\nu:\langle s \rangle \char 94 \nu \in \Lambda_{\bold x_{n+1}}\}$
and for $\nu \in \Lambda^n_s$ let $\gamma^{\bold y_{n,s}}_\nu = \nu^{\bold
x_{n+1}}_{<s> \char 94 \nu}$ and $Y^{\bold y_{n,s}}_\nu = Y^{\bold
x_{n+1}}_{<s> \char 94 \nu}$.  Now clearly $\langle \bold y_{n,s}:n <
\omega\rangle$ is an $\omega$-branch of $(\cT^3_\gamma,\le_*)$ so by
the induction hypothesis $A^{\bold x_1}_{<s>} \in D$,
comp$_{S,\gamma}(D)$ and $Y^{\bold x_0}_{<>} = A = \cap\{Y^{\bold
x}_{<s>}:<> \in \Lambda_{\bold x_1}\} \in 
\text{ comp}_{S,\gamma+1}(D)$.  So we are done.]

Now toward a contradiction assume that $\gamma_S(D) > \theta$, so
there is $A \in D_{\theta +1} \backslash D_\theta$
hence here is an $\omega$-branch
$\langle \bold x_n:n < \omega\rangle$ of $\cT^3_\gamma$ witnessing
that $A \in D_{\theta +1}$, let $\Lambda = \cup\{\Lambda_{\bold x_n}:n
< \omega\}$ and $\gamma_\eta = \gamma^{\bold x_n}_\eta$ for every $n <
\omega$ large enough.  So $\Lambda$ is well founded (recalling $\eta
\triangleleft \nu \in \Lambda \Rightarrow \gamma_\eta > \gamma_\nu$)
and we can choose $\langle \gamma'_\eta:\eta \in \Lambda\rangle$
such that $\gamma'_\eta = \sup\{\gamma_\nu +1:\eta \triangleleft \nu
\in \Lambda$ and $\ell g(\nu) = \ell g(\eta) + 1\}$.  If $\gamma_{<>}
< \theta$ we are done otherwise let $\eta \in \Lambda$ be
$\triangleleft$-maximal such that $\gamma'_\eta \ge \theta$ hence $\eta
\triangleleft \nu \Rightarrow \gamma'_\nu < \theta$, so necessarily
$\gamma'_\eta = \theta = \cup\{\gamma'_\nu +1:\eta \triangleleft \nu
\in \Lambda,\ell g(\nu) = \ell g(\eta) +1\}$.  Let $S \in \mathscr{S}$
be such that $\eta \char 94 \langle s\rangle \in \Lambda
\Leftrightarrow s \in S$, so $\{\gamma'_{\eta \char 94 <s>}:s \in S\}$
is an unbounded subset of $\theta$ so cf$(\theta) \le \theta(S) <
\theta$.  This takes care of the first possibility for $\theta$ so the
second case is easier.

\noindent
3) It suffices to show that we can replace $\bold x \in \cT^2_2$ by
$\bold x \in \cT^2_1$.
\end{PROOF}

\begin{definition}
\label{z0.59} 
1) For a filter $D$ on a set $Y$ and a set $S$ let 
$\gamma_S(D)$ be as in clause (c) of the Observation \ref{z0.55}(1).

\noindent
1A) Similarly with ``$\in {\cS}$" instead $S$.

\noindent
2) $D$ is pseudo $(S,\gamma)$-complete if 
$\emptyset \notin \text{\rm comp}_{S,\gamma}(D)$. 

\noindent
2A) Similarly with ``$\in {\cS}$" instead $S$.
\end{definition}

\begin{observation}
\label{z0.79}  
1) If $h$ is a function from 
$S_1$ onto $S_2$ then $\theta(S_1) \ge \theta(S_2)$ and every [pseudo]
$(\le S_1)$-complete filter is a [pseudo] $(\le S_2)$-complete filter.
\end{observation}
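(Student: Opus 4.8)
The plan is to treat the two assertions of \ref{z0.79}(1) separately, each by transporting the relevant data along $h$.

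For the inequality $\theta(S_1) \ge \theta(S_2)$: I would note that if $g$ is a function from $S_2$ onto an ordinal $\alpha$, then $g \circ h$ is a function from $S_1$ onto $\alpha$ (since $h$ maps $S_1$ onto $S_2$ and $g$ maps $S_2$ onto $\alpha$). Hence every ordinal which is a surjective image of $S_2$ is a surjective image of $S_1$; equivalently, every ordinal admitting no surjection from $S_1$ admits none from $S_2$. Taking the minimum of each of these two sets of ordinals and invoking Definition \ref{z0.15}(1) gives $\theta(S_1) \ge \theta(S_2)$. The degenerate case $S_2 = \emptyset$, which forces $S_1 = \emptyset$, is immediate.

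For the transfer of completeness: suppose $D$ is a $(\le S_1)$-complete filter on $Y$ in the sense of Definition \ref{z0.11}(1), and let $\langle A_t : t \in S_2\rangle \in {}^{S_2}D$ be given. Put $A'_s := A_{h(s)}$ for $s \in S_1$; the sequence $\langle A'_s : s \in S_1\rangle$ is just $\langle A_t : t \in S_2\rangle$ precomposed with $h$, so it exists outright (no choice is needed) and lies in ${}^{S_1}D$. Because $h$ maps $S_1$ onto $S_2$, we have $\bigcap\{A'_s : s \in S_1\} = \bigcap\{A_t : t \in S_2\}$. By $(\le S_1)$-completeness the left-hand side belongs to $D$, hence so does the right-hand side, which is exactly $(\le S_2)$-completeness of $D$. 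The pseudo-version is the same argument with ``$\in D$'' replaced by ``is nonempty'': if $D$ is pseudo $(\le S_1)$-complete then $\bigcap\{A'_s : s \in S_1\} \ne \emptyset$, whence $\bigcap\{A_t : t \in S_2\} \ne \emptyset$, i.e. $D$ is pseudo $(\le S_2)$-complete. The conventions of Definition \ref{z0.11}(1) for ill-defined $|S|^+$ are respected since we never pass through cardinalities.

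I do not expect any genuine obstacle here: both steps rest only on the functoriality of ``surjective image'' and of ``pulling a sequence back along a surjection'', and the single point worth recording is that the composite $g \circ h$ and the pulled-back sequence $\langle A_{h(s)} : s \in S_1\rangle$ are given by explicit formulas from the data, so the statement is provable in plain ZF — consistent with the ``Easy''-level proofs elsewhere in this section.
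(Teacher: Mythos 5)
Your proof is correct and complete. The paper states Observation \ref{z0.79} without any proof at all, and your argument --- composing a surjection from $S_2$ onto $\alpha$ with $h$ to get one from $S_1$ onto $\alpha$, and pulling an $S_2$-indexed sequence in $D$ back along $h$ to an $S_1$-indexed one with the same intersection --- is precisely the routine, choice-free verification the author left to the reader.
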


\section {Commuting ranks}

The aim of this section is to sort out when two rank rk$_{D_1}$,
rk$_{D_2}$ do so called commute.
\bigskip

\noindent
\begin{definition}
\label{c3.1}  
Assume that $D_\ell$ 
is an $\aleph_1$-complete filter on $Y_\ell$ for $\ell=1,2$.  For $\iota
\in \{1,2,3,4,5\}$ we say $D_2$ does $\iota$-commute with 
$D_1$ \und{when}: $\boxplus_\iota  = 
\boxplus^\iota_{D_1,D_2}$ holds \und{where}:
\medskip

\noindent
\begin{enumerate}
\item[$\boxplus_1$]  if $A \in D_1$ and $\bar B = \langle B_s:s \in
A\rangle \in {}^A(D_2)$ \und{then} we can find $A_*,B_*$ such that:
$A_* \in D_1,B_* \in D_2$ and $A_* \times B_* \subseteq \cup\{\{s\}
\times B_s:s \in A\}$ so $A_* \subseteq A$
\smallskip

\noindent
\item[$\boxplus_2$]   if $A \in D_1$ and $\bar B = \langle B_s:s \in
A\rangle \in {}^A(D_2)$ and $J_2 = J[f_2,D_2]$ for some $f_2 \in
{}^{Y_2}\text{Ord}$ \und{then}  
we can find $A_*,B_*$ such that $A_* \in D_1,
B_* \in J^+_2$ and $A_* \times B_* \subseteq \cup\{\{s\} 
\times B_s:s \in A\}$ so $A_* \subseteq A$
\smallskip

\noindent
\item[$\boxplus_3$]   if $A \in D_1$ and $\bar B = \langle B_s:s \in
A \rangle \in {}^A(D_2)$ and $J_1 = J[f_1,D_1]$ for some $f_1 
\in {}^{Y_1}\text{Ord}$ \und{then} we can find
$A_*,B_*$ such that $A_* \in J^+_1,A_* \subseteq A,B_* \in D_2$ and $s \in
A_* \Rightarrow B_* \subseteq B_s$
\smallskip

\noindent
\item[$\boxplus_4$]  if $A \in D_1$ and $\bar B = \langle B_s:s \in
 A\rangle \in {}^A(D_2)$ and $\bar J^1 = \langle J^1_t:t \in Y_2\rangle$
 satisfies $J^1_t \in \{J[f,D_1]:f \in{}^{Y_1}\text{Ord}\}$ and $J_2
 \in\{J[f,D_2]:f \in {}^{Y_2}\text{Ord}\}$ \und{then} we can find
$A_*,B_*$ such that $B_* \in J^+_2$ and $t \in B_* \Rightarrow
A_* \in (J^1_t)^+$ and $(s,t) \in A_* \times B_* \Rightarrow s \in A \wedge t
 \in B_s$ hence $A_* \subseteq A,A_* \in D^+_1$
\smallskip

\noindent
\item[$\boxplus_5$]  like $\boxplus_4$ but we omit the sequence $\bar
J^1$ and the demand on $A_*$ is $A_* \in D^+_1$.
\end{enumerate}
\end{definition}

\begin{remark}
\label{c3.2}  
1) These are seemingly not commutative relations.

\noindent
2) We shall first give a consequence and then give sufficient
   conditions.

\noindent
3) We intend to generalize to systems (see \ref{m4.3} and \ref{m4.9}).

\noindent
4) Can we below use ``$D_\ell \in \text{ FIL}_{\text{pcc}}(Y_1)$, see
   Definition \ref{z0.32}?  Yes, but only when we do not use $D+A,A \in D^+$.
\end{remark}

\begin{claim}
\label{c3.3}
{\rm rk}$_{D_1}(f) \le \text{\rm rk}_{D_2}(g)$ \und{when}:
\medskip

\noindent
\begin{enumerate}
\item[$\oplus$]   $(a) \quad D_\ell \in 
\text{\rm FIL}_{\text{cc}}(Y_\ell)$ for $\ell=1,2$
\smallskip

\noindent
\item[${{}}$]   $(b) \quad \bar g = \langle g_t:t \in Y_2\rangle$
\smallskip

\noindent
\item[${{}}$]   $(c) \quad g_t \in {}^{Y_1}\text{\rm Ord}$
\smallskip

\noindent
\item[${{}}$]   $(d) \quad g \in {}^{Y_2}\text{\rm Ord}$ is defined by
$g(t) = \text{\rm rk}_{D_1}(g_t)$
\smallskip

\noindent
\item[${{}}$]   $(e) \quad \bar f = \langle f_s:s \in Y_1\rangle$
\smallskip

\noindent
\item[${{}}$]   $(f) \quad f_s \in {}^{Y_2}\text{\rm Ord}$ is
defined by $f_s(t) = g_t(s)$
\smallskip

\noindent
\item[${{}}$]   $(g) \quad f \in {}^{Y_1}\text{\rm Ord}$ is defined
by $f(s) = \text{\rm rk}_{D_2}(f_s)$
\smallskip

\noindent
\item[$\boxplus$]   $(a) \quad D_2$ does 2-commute with $D_1$
\smallskip

\noindent
\item[${{}}$]   $(b) \quad$ AC$_{Y_1}$ holds.
\end{enumerate}
\end{claim}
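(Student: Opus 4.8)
The plan is to prove $\text{rk}_{D_1}(f)\le\text{rk}_{D_2}(g)$ by induction on the ordinal $\alpha:=\text{rk}_{D_2}(g)$ (an ordinal, not $\infty$, by Claim \ref{z0.23}(1), as we work in ZF + DC), the inductive statement being: for all $D_1,D_2,\bar g,\bar f$ obeying $\oplus$ and $\boxplus$ with $\text{rk}_{D_2}(g)=\alpha$, one has $\text{rk}_{D_1}(f)\le\alpha$. By Claim \ref{z0.23}(2) (or \ref{z0.23}(2A)), $\text{rk}_{D_1}(f)\le\alpha$ is equivalent to: $\text{rk}_{D_1}(f')<\alpha$ for every $f'<_{D_1}f$. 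So I fix such an $f'$ and set $A:=\{s\in Y_1:f'(s)<f(s)\}\in D_1$. For each $s\in A$ we have $f'(s)<f(s)=\text{rk}_{D_2}(f_s)$, so Claim \ref{z0.23}(2) provides some $h<_{D_2}f_s$ with $\text{rk}_{D_2}(h)=f'(s)$; using $\text{AC}_{Y_1}$ (clause $\boxplus(b)$) I fix a sequence $\langle f'_s:s\in A\rangle$ of such witnesses, and put $B_s:=\{t\in Y_2:f'_s(t)<f_s(t)\}\in D_2$ for $s\in A$. This single appeal to choice is the only one needed.

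Now I invoke the commuting hypothesis. Since $D_2$ $2$-commutes with $D_1$ (clause $\boxplus(a)$), applying $\boxplus^2_{D_1,D_2}$ to $A$, $\bar B=\langle B_s:s\in A\rangle$ and the function $f_2:=g$ — so $J_2:=J[g,D_2]$ — yields $A_*\in D_1$ with $A_*\subseteq A$ and $B_*\in J_2^+$ such that $s\in A_*\wedge t\in B_*\Rightarrow t\in B_s$, i.e. $f'_s(t)<f_s(t)$. Form the new rectangle $\langle g'_t:t\in Y_2\rangle$ by letting $g'_t\in{}^{Y_1}\text{Ord}$ be $g'_t(s)=f'_s(t)$ for $s\in A$ and $g'_t(s)=0$ for $s\in Y_1\setminus A$, and set $g'(t):=\text{rk}_{D_1}(g'_t)$. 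Its associated row function $s\mapsto\text{rk}_{D_2}(\langle g'_t(s):t\in Y_2\rangle)$ equals $f'(s)$ for $s\in A$ (then $\langle g'_t(s):t\rangle=f'_s$) and $0$ otherwise, hence equals $f'$ mod $D_1$; and $\langle g'_t:t\in Y_2\rangle$ again satisfies $\oplus$ and $\boxplus$ with the \emph{same} $D_1,D_2$.

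The crux is $\text{rk}_{D_2}(g')<\alpha$. Recall $f_s(t)=g_t(s)$, so for $t\in B_*$ and every $s\in A_*$ we get $g'_t(s)=f'_s(t)<f_s(t)=g_t(s)$; as $A_*\in D_1$ this gives $g'_t<_{D_1}g_t$, hence $g'(t)=\text{rk}_{D_1}(g'_t)<\text{rk}_{D_1}(g_t)=g(t)$ by Claim \ref{z0.23}(3). Thus $g'<_{D_2+B_*}g$, so $\text{rk}_{D_2+B_*}(g')<\text{rk}_{D_2+B_*}(g)$ again by \ref{z0.23}(3); but $B_*\in J_2^+=J[g,D_2]^+$, so $\text{rk}_{D_2+B_*}(g)=\text{rk}_{D_2}(g)=\alpha$ by Claim \ref{z0.29}(4), while $\text{rk}_{D_2}(g')\le\text{rk}_{D_2+B_*}(g')$ by Observation \ref{z0.24}(2). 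Combining, $\text{rk}_{D_2}(g')<\alpha$. If $\alpha=0$ this is already a contradiction, so no $f'<_{D_1}f$ exists and $\text{rk}_{D_1}(f)=0$; if $\alpha>0$, the induction hypothesis applied to $\langle g'_t:t\in Y_2\rangle$ gives $\text{rk}_{D_1}(\text{row function})\le\text{rk}_{D_2}(g')<\alpha$, and since the row function equals $f'$ mod $D_1$, Observation \ref{z0.24}(1) yields $\text{rk}_{D_1}(f')<\alpha$, completing the step.

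I expect the only real difficulty to be the inequality $\text{rk}_{D_2}(g')<\alpha$. The sub-rectangle naturally "lives" over $A_*\times B_*$, and $B_*$ is merely $J[g,D_2]$-positive rather than $D_2$-large, so one is tempted to continue with $D_2+B_*$ in place of $D_2$ — but the statement being inducted on concerns $D_1,D_2$ themselves, and (cf. Remark \ref{c3.2}(4)) $2$-commuting need not descend to $D_2+B_*$. The point is to keep $D_2$ fixed and instead bring the rank back down using Claim \ref{z0.29}(4) (restricting to a $J[g,D_2]$-positive set leaves $\text{rk}(g)$ unchanged) together with monotonicity \ref{z0.24}(2); and the deliberate choice $f_2=g$ when applying $\boxplus^2_{D_1,D_2}$ is precisely what makes \ref{z0.29}(4) applicable. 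The remaining points — the reformulation via $f'<_{D_1}f$, the use of $\text{AC}_{Y_1}$ to pick the $f'_s$, and the verification that the primed rectangle inherits $\oplus,\boxplus$ — are routine.
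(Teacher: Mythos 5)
Your proof is correct, and although the heart of the argument (choose a witness $f'<_{D_1}f$, build the primed rectangle, invoke $2$-commuting with $J_2=J[g,D_2]$ to land on an $A_*\times B_*$ where columns decrease, and use \ref{z0.29}(4) to keep $\text{rk}_{D_2+B_*}(g)=\text{rk}_{D_2}(g)$) is the same as the paper's, the inductive scaffolding around it is genuinely different. The paper proves the contrapositive-flavored statement $\boxdot_\zeta$: ``$\text{rk}_{D_1}(f)\ge\zeta\Rightarrow\text{rk}_{D_2}(g)\ge\zeta$'' by induction on $\zeta$, and in the successor step it applies the inductive hypothesis to the \emph{modified} pair $(D_1+A_*,D_2+B_*)$, obtaining $\xi\le\text{rk}_{D_2+B_*}(g')$ and only at the end descending to $D_2$. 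This requires (re)checking that $\oplus$ and $\boxplus$ hold for the shrunken filters; the paper asserts ``obviously clauses (a),(b) of $\boxplus$ hold'' and chooses the $f'_s$ so that $\text{rk}_{D_2+B_*}(f'_s)=f'(s)$. You instead induct on $\alpha=\text{rk}_{D_2}(g)$, choose $f'_s$ so that $\text{rk}_{D_2}(f'_s)=f'(s)$, and prepend the monotonicity inequality $\text{rk}_{D_2}(g')\le\text{rk}_{D_2+B_*}(g')$ from \ref{z0.24}(2). This pushes the whole comparison back to $D_2$ before the recursive call, so the inductive hypothesis is applied to the \emph{original} $D_1,D_2$; no re-verification of the commuting hypothesis for $D_2+B_*$ is needed. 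That is a real simplification of the bookkeeping. Two small remarks: the appeal to Remark \ref{c3.2}(4) as evidence that $2$-commuting ``need not descend'' to $D_2+B_*$ is not quite what that remark says (it concerns FIL$_{\text{pcc}}$ vs.\ FIL$_{\text{cc}}$); and it is worth noting that your variant also adapts to \ref{c3.14} (where $A_*$ is only $D_1$-positive), by adding the symmetric step $\text{rk}_{D_1}(g'_t)\le\text{rk}_{D_1+A_*}(g'_t)$ together with $(*)'_5(d)$.
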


\begin{remark}  In order not to use DC in the proof we should consider
$\infty$ as a member of Ord in clauses (d),(g) of $\boxplus$.
\end{remark}

\begin{PROOF}{\ref{c3.3}}  
We prove by induction on the ordinal $\zeta$ that
\medskip

\noindent
\begin{enumerate}
\item[$\boxdot_\zeta$]    if $\oplus + \boxplus$ above hold for
$D_1,D_2,f,g,\bar f,\bar g$ and rk$_{D_1}(f) \ge \zeta$ then
rk$_{D_2}(g) \ge \zeta$.
\end{enumerate}
\medskip

\noindent
The case $\zeta=0$ is trivial and the case $\zeta$ a limit ordinal
follows by the induction hypothesis.  So assume that $\zeta = \xi+1$.

Let
\medskip

\noindent
\begin{enumerate}
\item[$(*)_1$]   $A := \{s \in Y_1:f(s) > 0\}$.
\end{enumerate}
\medskip

\noindent
As we are assuming rk$_{D_1}(f) > \xi \ge 0$ by \ref{z0.23}(4) necessarily
\medskip

\noindent
\begin{enumerate}
\item[$(*)_2$]   $A \in D_1$.
\end{enumerate}
\medskip

\noindent
For each $s \in A,f(s) > 0$ so applying clause (g) of $\oplus$ we get
\medskip

\noindent
\begin{enumerate}
\item[$(*)_3$]   rk$_{D_2}(f_s) > 0$ when $s \in A$ 
\end{enumerate}
\medskip

\noindent
hence
\medskip

\noindent
\begin{enumerate}
\item[$(*)_4$]   $B_s := \{t \in Y_2:f_s(t) > 0\}$ belongs to $D_2$
when $s \in A$.
\end{enumerate}
\medskip

\noindent
So $\langle B_s:s \in A\rangle \in {}^A(D_2)$.
Recall (see $\boxplus(a)$ of the assumption) that $D_2$ does 2-commute
with $D_1$, apply it to $A,\langle B_s:s \in A\rangle,J_2 := J[g,D_2]$; so
we can find $A_*,B_*$ such that
\medskip

\noindent
\begin{enumerate}
\item[$(*)_5$]   $(a) \quad A_* \in D_1$ (and $A_* \subseteq A$)
\smallskip

\noindent
\item[${{}}$]   $(b) \quad B_* \in J^+_2$ recalling $J_2 = J[g,D_2]$
so $B_* \in D^+_2$ and (by Definition \ref{z0.27})
\newline

\hskip25pt  rk$_{D_2+B_*}(g) = \text{ rk}_{D_2}(g)$
\smallskip

\noindent
\item[${{}}$]    $(c) \quad (s,t) \in A_* \times B_* \Rightarrow s
\in A \wedge t \in B_s$.
\end{enumerate}
\medskip

\noindent
Now by the present assumption of $\boxdot_\zeta$ we have
\medskip

\noindent
\begin{enumerate}
\item[$(*)_6$]   rk$_{D_1}(f) \ge \zeta = \xi +1$.
\end{enumerate}
\medskip

\noindent
Hence by the definition of rk and \ref{z0.23}(2) we can find $f'$ such that
\medskip

\noindent
\begin{enumerate}
\item[$(*)_7$]   $(a) \quad f' \in {}^{Y_1}\text{Ord}$ 
and rk$_{D_1}(f') \ge \xi$
\smallskip

\noindent
\item[${{}}$]    $(b) \quad f' <_{D_1} f$ 
\smallskip

\noindent
\item[${{}}$]   $(c) \quad$ by $(*)_1$ \wilog \,
$s \in A \Rightarrow f'(s) < f(s)$.
\end{enumerate}
\medskip

\noindent
For each $s \in A$, clearly $f'(s) < f(s) = 
\text{\rm rk}_{D_2}(f_s) \le \text{ rk}_{D_2+B_*}(f_s)$, 
by \ref{z0.24}(2), clause (g) of $\oplus$ and $D_2 \subseteq D_2 +
B_*$ respectively, hence by \ref{z0.23}(2) for each 
$s \in Y_1$ there is a function $f'_s$ such that
\medskip

\noindent
\begin{enumerate}
\item[$(*)_8$]   $(a) \quad f'_s \in {}^{(Y_2)}\text{Ord}$,
\smallskip

\noindent
\item[${{}}$]   $(b) \quad f'_s < f_s \text{ mod } D_2$ if $s \in A$
and $t \in Y_2 \Rightarrow f'_s(t) < f_s(t) \vee f'_s(t) = 0 = f_s(t)$
\smallskip

\noindent
\item[${{}}$]   $(c) \quad$ rk$_{D_2+B_*}(f'_s) = f'(s)$; may
require this only for $s \in A$.
\end{enumerate}
\medskip

\noindent
As $Y_1 \in \bold C$ by $\boxplus(b)$ of the assumption, clearly
\medskip

\noindent
\begin{enumerate}
\item[$(*)^+_8$]   there is such a sequence 
$\bar f' = \langle f'_s:s \in Y_1\rangle$.
\end{enumerate}
\medskip

\noindent
As $s \in A_* \wedge t \in B_* \Rightarrow f_s(t) 
> 0$, see $(*)_4 + (*)_5$, clearly
\medskip

\noindent
\begin{enumerate}
\item[$(*)_9$]   if $s \in A_*$ and $t \in B_*$ then $f'_s(t) < f_s(t)$.
\end{enumerate}
\medskip

\noindent
We now define $\bar g' = \langle g'_t:t \in Y_2 \rangle$ by
\medskip

\noindent
\begin{enumerate}
\item[$(*)_{10}$]   $g'_t(s) = f'_s(t)$ for $s \in Y_1,t \in Y_2$ 
so $g'_t \in {}^{Y_1}\text{Ord}$.
\end{enumerate}
\mn
So
\medskip

\noindent
\begin{enumerate}
\item[$(*)_{11}$]   $s \in A_* \wedge t \in B_* 
\Rightarrow g'_t(s) = f'_s(t) < f_s(t) = g_t(s)$
\end{enumerate}
\medskip

\noindent
hence (recalling $A_* \in D_1$ by $(*)_5(a)$ and \ref{z0.23}(3))
\medskip

\noindent
\begin{enumerate}
\item[$(*)_{12}$]   if $t \in B_*$ then $g'_t <_{D_1} g_t$ hence
rk$_{D_1}(g'_t) < \text{ rk}_{D_1}(g_t)$.
\end{enumerate}
\medskip

\noindent
Define $g' \in {}^{(Y_2)}\text{Ord}$ by $g'(t) := 
\text{ rk}_{D_1}(g'_t)$ hence (recalling rk$_{D_1}(g_t) = g(t)$)
\medskip

\noindent
\begin{enumerate}
\item[$(*)_{13}$]   $g' < g$ mod $D_2 + B_*$.
\end{enumerate}
\medskip

\noindent
Note that here $D_1 + A_* = D_1$, (though not so when we shall prove
\ref{c3.14}). 

Now we apply the induction hypothesis to $g',f',\bar g' := \langle g'_t:t \in
Y_2\rangle,\bar f' := \langle f'_s:s \in Y_1\rangle,
D_1 + A_*,D_2 + B_*$ and $\xi$ and get
\medskip

\noindent
\begin{enumerate}
\item[$(*)_{14}$]   $\xi \le \text{ rk}_{D_2+B_*}(g')$.
\end{enumerate}
\medskip

\noindent
[Why is this legitimate?   First, obviously clauses (a),(b) of
$\boxplus$ holds, second, we have to check that clauses
(a)-(g) of $\oplus$ hold in this instance.
\bigskip

\noindent
\und{Clause (a)}:  First ``$D_1 + A_* \in \text{ FIL}_{\text{cc}}(Y_1)$" as
we assume $D_1 \in \text{ FIL}_{\text{cc}}(Y_1)$ and $A_* \in D_1$,
see $(*)_5(a)$, actually $A_* \in D^+_1$ suffice (used in proving 
\ref{c3.14}).
\newline

\noindent
Second, ``$D_2 + B_* \in \text{ FIL}_{\text{cc}}(Y_2)$" as
$D_2 \in \text{ FIL}_{\text{cc}}(Y_2)$ and $B_* \in D^+_2$ by
$(*)_5(b)$.
\bigskip

\noindent
\und{Clause (b)}:  ``$\bar g' = \langle g'_t:t \in Y_2\rangle$" by our
choice.
\bigskip

\noindent
\und{Clause (c)}:  ``$g'_t \in {}^{Y_1}\text{Ord}$" by $(*)_{10}$.
\bigskip

\noindent
\und{Clause (d)}:  ``$g' \in {}^{Y_2}\text{Ord}$ is defined by $g'(t) =
\text{ rk}_{D_1}(g'_t)$" by its choice after $(*)_{12}$.
\bigskip

\noindent
\und{Clause (e)}:  ``$\bar f' = \langle f'_s:s \in Y\rangle$" by our
choice in $(*)^+_8$.
\bigskip

\noindent
\und{Clause (f)}:  ``$f'_s \in {}^{Y_2}\text{Ord}$ is defined by
$f'_s(t) = g'_t(s)$ holds by $(*)_{10}$.
\bigskip

\noindent
\und{Clause (g)}:  ``$f' \in {}^{Y_1}\text{Ord}$ is defined by
$f'(s) = \text{ rk}_{D_2+B_*}(f'_s)$" holds by $(*)_7(a) + (*)_8(c)$.

Now $\boxdot_\xi$, the induction hypothesis, assumes
``rk$_{D_1+A_*}(f') \ge \xi$" which holds by $(*)_7(a) + (*)_5(a)$,
actually $A_* \in D^+_1$ suffice here and
its conclusion is $\xi \le \text{ rk}_{D_2+B_*}(g')$ as promised in
$(*)_{14}$.] 

Next
\medskip

\noindent
\begin{enumerate}
\item[$(*)_{15}$]   $\xi < \text{ rk}_{D_2}(g)$.
\end{enumerate}
\medskip

\noindent
[Why?  
\medskip

\noindent
\begin{enumerate}
\item[$\bullet_1$]   $\xi \le \text{ rk}_{D_2+B_*}(g')$ by $(*)_{14}$ 
\smallskip

\noindent
\item[$\bullet_2$]   rk$_{D_2+B_*}(g') < \text{ rk}_{D_2+B_*}(g)$ by
$(*)_{13}$ and \ref{z0.23}(3)
\smallskip

\noindent
\item[$\bullet_3$]   rk$_{D_2+B_*}(g) = \text{ rk}_{D_2}(g)$ by $(*)_5(b)$.
\end{enumerate}
\medskip

\noindent
Together $(*)_{15}$ holds.]

So
\begin{enumerate}
\item[$(*)_{16}$]   $\zeta = \xi +1 \le \text{ rk}_{D_2}(g)$
\end{enumerate}
as promised.  Together we are done.  
\end{PROOF}

\begin{claim}
\label{c3.11}  
Assume $D_\ell \in \text{\rm FIL}_{\text{cc}}(Y_\ell)$ for $\ell=1,2$.
\newline

If $D_2$ does $\iota_1$-commute with $D_1$ \und{then} $D_2$ does 
$\iota_2$-commute with $D_1$ when $(\iota_1,\iota_2) = (1,2),(1,3),(2,4),(1,4),(1,5),(4,5)$.
\end{claim}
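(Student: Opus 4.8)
The plan is to deduce all six implications from two elementary facts. First, for any $\aleph_1$-complete filter $D$ on a set $Y$ and any $f\in{}^Y\mathrm{Ord}$ we have $\mathrm{dual}(D)\subseteq J[f,D]$ straight from Definition \ref{z0.27}, hence $D\subseteq J[f,D]^+\subseteq D^+$; and if $f$ is constantly $1$ then $J[f,D]=\mathrm{dual}(D)$ by Claim \ref{z0.29}(3), so $J[f,D]^+=D^+$ in that case. Second, the rectangle clause $A_*\times B_*\subseteq\bigcup\{\{s\}\times B_s:s\in A\}$ appearing in $\boxplus_1$ and $\boxplus_2$ says precisely that $(s,t)\in A_*\times B_*\Rightarrow s\in A\wedge t\in B_s$, which is the form used in $\boxplus_4$ and $\boxplus_5$; moreover in all cases $B_*$ is nonempty (it lies in $D_2$ or in $J_2^+$, neither of which contains $\emptyset$), so $A_*\subseteq A$ and $s\in A_*\Rightarrow B_*\subseteq B_s$ follow. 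I would record these observations at the outset.

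After that each implication is a one-line substitution. For $(1,2)$: apply $\boxplus_1$ to the data of $\boxplus_2$; the resulting $B_*\in D_2$ lies in $J[f_2,D_2]^+$. For $(1,3)$: apply $\boxplus_1$; the resulting $A_*\in D_1$ lies in $J[f_1,D_1]^+$. For $(2,4)$: given $\bar J^1=\langle J^1_t:t\in Y_2\rangle$ and $J_2=J[f_2,D_2]$, apply $\boxplus_2$ to $A,\bar B,J_2$; since $A_*\in D_1\subseteq(J^1_t)^+$ for every $t$, in particular for every $t\in B_*$, the output witnesses $\boxplus_4$. For $(1,4)$: compose $(1,2)$ with $(2,4)$. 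For $(1,5)$: apply $\boxplus_1$ and note $A_*\in D_1\subseteq D_1^+$ and $B_*\in D_2\subseteq J_2^+$. For $(4,5)$: given $A,\bar B,J_2$ as in $\boxplus_5$, set $J^1_t:=\mathrm{dual}(D_1)$ for every $t\in Y_2$ (legitimate, since $\mathrm{dual}(D_1)=J[f,D_1]$ for $f$ constantly $1$) and apply $\boxplus_4$; choosing any $t\in B_*$, possible because $B_*\neq\emptyset$, gives $A_*\in(J^1_t)^+=D_1^+$, which is exactly what $\boxplus_5$ demands.

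I do not expect a genuine obstacle: every step merely rewrites a filter as a subfamily of a larger family of positive sets, or substitutes the trivial ideal $\mathrm{dual}(D_1)$ for the sequence $\bar J^1$, and no form of choice enters (the conditions $\boxplus_\iota$ speak only of subsets of $Y_1\times Y_2$ and of the ideals $J[f,D]$, whose definition does not presuppose that $\mathrm{rk}$ is ordinal-valued). The one thing to watch is bookkeeping, namely keeping straight which of $D$, $D^+$, $J[f,D]$, $J[f,D]^+$ is in use at each point. So the write-up should consist of the inclusion remark above followed by the six short verifications listed here.
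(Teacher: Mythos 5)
Your proof is correct and takes essentially the same route as the paper, whose own proof is simply ``Obvious; for $(4,5)$ use \ref{z0.29}(3)''. You fill in exactly the details the paper leaves implicit: the facts that $D\subseteq J[f,D]^{+}$, that the rectangle inclusion $A_*\times B_*\subseteq\bigcup\{\{s\}\times B_s:s\in A\}$ unwinds to $(s,t)\in A_*\times B_*\Rightarrow s\in A\wedge t\in B_s$, and that for $(4,5)$ one substitutes the constant sequence $J^1_t=\mathrm{dual}(D_1)=J[\mathbf{1},D_1]$ via Claim \ref{z0.29}(3). One tiny imprecision: the inclusion $D\subseteq J[f,D]^{+}$ is not a formal consequence of $\mathrm{dual}(D)\subseteq J[f,D]$ as your ``hence'' suggests (complementing gives $J[f,D]^{+}\subseteq D^{+}$, not the other inclusion); it does hold, but by the direct check that if $Z\in D$ then $Y\setminus Z$ lies neither in $D$ nor in $D^{+}$, so $Z\notin J[f,D]$. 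This is cosmetic and does not affect the argument.
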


\begin{PROOF}{\ref{c3.11}}
Obvious for (4,5) use \ref{z0.29}(3).  
\end{PROOF}

\begin{claim}
\label{c3.8}  
Assume $D_\ell \in \text{\rm FIL}_{\text{cc}}(Y_\ell)$ 
for $\ell=1,2$.  If at least one of the
following cases occurs, \und{then} $D_2$ does 1-commute (hence
2-commute) with $D_1$.
\medskip

\noindent
\und{Case 1}:  $D_2$ is $|Y_1|^+$-complete.
\medskip

\noindent
\und{Case 2}:  $D_1$ is an ultrafilter which is $|Y_2|^+$-complete
\medskip

\noindent
\und{Case 3}:  $D_1,D_2$ are ultrafilters and if $\bar A = \langle
A_t:t \in Y_2\rangle \in {}^{Y_2}(D_1)$ \und{then} for some $A_* \in
D_1$ we have $\{t:A_t \supseteq A_*\} \in D_2$.
\end{claim}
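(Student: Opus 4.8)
The plan is to verify the combinatorial condition $\boxplus_1 = \boxplus^1_{D_1,D_2}$ of Definition~\ref{c3.1} in each of the three cases; the parenthetical ``hence $2$-commute'' is then immediate from Claim~\ref{c3.11} (the implication $(\iota_1,\iota_2)=(1,2)$). So fix $A \in D_1$ and $\bar B = \langle B_s : s \in A\rangle \in {}^A(D_2)$; we must produce $A_* \in D_1$ and $B_* \in D_2$ with $B_* \subseteq B_s$ for every $s \in A_*$, which is precisely what $A_* \times B_* \subseteq \bigcup_{s\in A}\{s\}\times B_s$ says, and $A_* \subseteq A$ is then automatic since $B_* \ne \emptyset$. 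Case~1 needs essentially nothing: pad $\bar B$ by $B_s := Y_2$ for $s \in Y_1\setminus A$, so that $|Y_1|^+$-completeness of $D_2$ (i.e.\ $(\le Y_1)$-completeness, Definition~\ref{z0.11}(1)) gives $B_* := \bigcap_{s\in A} B_s \in D_2$; take $A_* := A$.

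For Case~2, use that $D_1$ is an ultrafilter. For each $t\in Y_2$ put $A_t := \{s\in A : t\in B_s\}$ and let $B_* := \{t\in Y_2 : A_t \in D_1\}$; note these are defined outright by a formula in the parameter $D_1$, so no choice is involved. The crux is that $B_*\in D_2$, and this is where $|Y_2|^+$-completeness enters: with $W_0 := Y_2\setminus B_* = \{t : A\setminus A_t\in D_1\}\subseteq Y_2$, the set $A' := \bigcap_{t\in W_0}(A\setminus A_t)$ lies in $D_1$ (the case $W_0 = \emptyset$ being trivial), hence is nonempty; any $s_0\in A'$ satisfies $W_0\cap B_{s_0}=\emptyset$, so $W_0\subseteq Y_2\setminus B_{s_0}\in\text{dual}(D_2)$ and therefore $B_*\in D_2$. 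Now put $A_* := \bigcap_{t\in B_*} A_t$, which is in $D_1$ again by $|Y_2|^+$-completeness ($B_*\subseteq Y_2$, each $A_t\in D_1$) and satisfies $A_*\subseteq A$; for $s\in A_*$ and $t\in B_*$ we have $s\in A_t$, i.e.\ $t\in B_s$.

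Case~3 follows the same pattern, with the additional hypothesis playing the role that completeness played in Case~2. Again using that $D_1$ is an ultrafilter, for $t\in Y_2$ set $A_t := \{s\in A : t\in B_s\}$ if this set is in $D_1$ and $A_t := \{s\in A : t\notin B_s\}$ otherwise, and let $W := \{t : A_t = \{s\in A : t\in B_s\}\}$; then $\bar A = \langle A_t : t\in Y_2\rangle\in {}^{Y_2}(D_1)$, so the Case~3 hypothesis yields $A_*\in D_1$ with $C := \{t : A_*\subseteq A_t\}\in D_2$, and replacing $A_*$ by $A_*\cap A$ we may assume $A_*\subseteq A$ (which only enlarges $C$). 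Picking any $s_0\in A_*$: for $t\in C\setminus W$ we get $s_0\in A_t = \{s\in A : t\notin B_s\}$, i.e.\ $t\notin B_{s_0}$, so $C\setminus W\subseteq Y_2\setminus B_{s_0}\in\text{dual}(D_2)$ and hence $B_* := C\cap W\in D_2$; and for $s\in A_*$, $t\in B_*$ we have $s\in A_t$ with $t\in W$, whence $t\in B_s$.

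The main delicacy, common to Cases~2 and~3, is the step that the candidate $B_*$ lies in $D_2$ and not merely in $D_2^+$: since $D_2$ is only a filter one cannot argue by bare complementation, and the remedy is to exhibit a single $s_0$ whose set $Y_2\setminus B_{s_0}\in\text{dual}(D_2)$ swallows $Y_2\setminus B_*$. The other thing worth checking is that no form of choice is used: in all three cases the sequences $\langle A_t : t\in Y_2\rangle$ (and the padded $\bar B$) are defined by a formula, the intersections invoked are of explicitly given sequences of filter sets, and the only selections are single choices ($s_0\in A'$, and the $A_*$ provided by the Case~3 hypothesis) --- so the argument goes through in ZF.
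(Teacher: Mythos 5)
Your proof is correct and follows essentially the same strategy as the paper: Case~1 by direct intersection, Case~2 by first using the ultrafilter on $D_1$ to build a $Y_2$-indexed family in $D_1$ (your $A_t$, the paper's $A''_t$), intersecting by $|Y_2|^+$-completeness, and then showing $B_*\in D_2$ by exhibiting a single $s_0$ with $B_{s_0}\subseteq B_*$. The paper dismisses Case~3 with ``Like Case~2,'' so your spelled-out verification there is a welcome supplement, and it correctly uses the hypothesis of Case~3 in place of the completeness used in Case~2.
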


\begin{PROOF}{\ref{c3.8}}  
So let $A \in D_1$ and $\langle B_s:s \in A\rangle
\in{}^A(D_2)$ be given.
\medskip

\noindent
\und{Case 1}:  Let $A_* = A$ and $B_* = \cap\{B_s:s \in A\}$, so $A_*
\in D_1$ by an assumption and $B_* \in D_2$ as we assume $\{B_s:s
\in A\} \subseteq D_2$ and $D_2$ is $|Y_1|^+$-complete (and
necessarily $|A| \le |Y_1|$).
\medskip

\noindent
\und{Case 2}:  For each $t \in Y_2$ let $A'_t := \{s \in Y_1:s \in A$
and $t \in B_s\}$ and let $A''_t$ be the unique member of $\{A'_t,Y_1
\backslash A'_t\} \cap D_1$, recalling $D_1$ is an ultrafilter on
$Y_1$.  Clearly the functions $t \mapsto A'_t$ and $t \mapsto A''_t$
are well defined hence the sequences $\langle A'_t:t \in
Y_2\rangle,\langle A''_t:t \in Y_2\rangle$ exist and $\{A''_t:t \in
Y_2\} \subseteq D_1$.  

As $D_1$ is $|Y_2|^+$-complete necessarily $A_*
:= \cap\{A''_t:t \in Y_2\} \cap A$ belongs to $D_1$, and clearly $A_*
\subseteq A$.  Let $B_* = \{t \in Y_2:A''_t = A'_t\}$.

So now choose
any $s_* \in A_*$ (possible as $A_* \in D_1$ implies $A_* \ne
\emptyset$) so $B_{s_*} \in D_2$ and $t \in B_{s_*} \Rightarrow s_*
\in A'_t \Rightarrow s_* \in A'_t \cap A_* \Rightarrow A'_t \cap A_*
 \ne \emptyset \Rightarrow A''_t = A'_t \Rightarrow t \in B_*$ so $B_{s_*}
\subseteq B_*$ but $B_{s_*} \in D_2$ hence $B_* \in D_2$. 
So $A_*,B_*$ are as required.
\medskip

\noindent
\und{Case 3}:  

Like Case 2.  
\end{PROOF}

\begin{claim}
\label{c3.10}  
Assume {\rm AC}$_{\cP(Y_2)}$.

\noindent
1) Assume $D_1 \in \text{\rm FIL}_{\text{cc}}(Y_1)$ and 
$D_2 \in \text{\rm FIL}_{\text{cc}}(Y_2)$.

Then $D_2$ does 3-commute with $D_1$ \und{when} $D_1$ is $(\le 
{\cP}(Y_2))$-complete.

\noindent
2) In part (1) if $E \subseteq D_2$ is 
$(D_2,\subseteq)$-cofinal, it suffices to assume $D_1$ is 
$(\le E)$-complete.
\end{claim}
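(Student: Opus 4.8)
The plan is to deduce part~(1) from part~(2) by taking $E = D_2$: then $E \subseteq D_2$ is trivially $(D_2,\subseteq)$-cofinal, and a $(\le \cP(Y_2))$-complete filter is $(\le D_2)$-complete since $D_2$ is a surjective image of $\cP(Y_2)$ (map $X\mapsto X$ for $X\in D_2$ and $X\mapsto Y_2$ otherwise), cf.\ \ref{z0.79}(1). Hence it suffices to prove~(2). So assume $D_1$ is $(\le E)$-complete, fix $A \in D_1$, a sequence $\bar B = \langle B_s : s \in A\rangle \in {}^A(D_2)$, and $f_1 \in {}^{Y_1}\text{Ord}$, and set $J_1 := J[f_1,D_1]$; the goal is to produce $A_*,B_*$ witnessing $\boxplus^3_{D_1,D_2}$, i.e.\ $A_* \in J_1^+$, $A_* \subseteq A$, $B_* \in D_2$ and $s \in A_* \Rightarrow B_* \subseteq B_s$.

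First I would record the two facts about $J_1$ that drive the argument. (i) $J_1$ is an $\aleph_1$-complete and $(\le E)$-complete ideal on $Y_1$: this is \ref{z0.29}(1) applied with $\cY = E$, whose hypotheses hold because $D_1 \in \text{FIL}_{\text{cc}}(Y_1)$ is $\aleph_1$-complete, $D_1$ is $(\le E)$-complete by assumption, DC is available throughout, and $\text{AC}_E$ follows from $\text{AC}_{\cP(Y_2)}$ since $E \subseteq \cP(Y_2)$ (pad an $E$-indexed family of nonempty sets to a $\cP(Y_2)$-indexed one). (ii) $A \in J_1^+$: indeed $A \in D_1$ makes both disjuncts in Definition~\ref{z0.27} fail for $Z = A$ (namely $Y_1 \setminus A \notin D_1$ and $Y_1\setminus A \notin D_1^+$), so $A \notin J_1$.

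Now, for each $B \in E$ put $A_B := \{s \in A : B \subseteq B_s\}$. The map $B \mapsto A_B$ is definable from $\bar B$, so the sequence $\langle A_B : B \in E\rangle$ exists with no appeal to choice --- this is the one place where one must be careful not to \emph{choose} some $B \subseteq B_s$ for each $s$. Since $E$ is $(D_2,\subseteq)$-cofinal in $D_2$ and $B_s \in D_2$ for each $s \in A$, every $s \in A$ lies in some $A_B$, so $A = \bigcup\{A_B : B \in E\}$. As $A \in J_1^+$ and $J_1$ is a $(\le E)$-complete ideal, the pieces of this union cannot all lie in $J_1$; so pick $B_* \in E$ with $A_{B_*} \in J_1^+$ and put $A_* := A_{B_*}$. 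Then $A_* \subseteq A$, $A_* \in J_1^+$, $B_* \in E \subseteq D_2$, and $s \in A_* \Rightarrow B_* \subseteq B_s$ by construction, which is exactly $\boxplus^3_{D_1,D_2}$.

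The only real work is checking that the hypotheses of \ref{z0.29}(1) are met in our choiceless-ish setting: that $\text{AC}_{\cP(Y_2)}$ (rather than full AC) suffices, that $(\le \cP(Y_2))$-completeness of $D_1$ really descends to $(\le E)$-completeness in part~(1) while in part~(2) it is assumed outright, and --- a harmless degenerate case --- that when $E$ is finite the phrase ``$(\le E)$-complete ideal'' just means ``proper ideal'', which it is, so the union argument still goes through. Everything else is bookkeeping.
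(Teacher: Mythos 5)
Your proof is correct and rests on exactly the same mechanism as the paper's: apply Claim \ref{z0.29}(1) to conclude that $J_1 = J[f_1,D_1]$ is an $(\le E)$-complete (resp.\ $(\le \cP(Y_2))$-complete) ideal, observe $A \in J_1^+$, and then use completeness on a covering of $A$ to extract a $J_1$-positive piece $A_*$ together with a witness $B_* \in D_2$. Your proof of part (2) coincides with the paper's. For part (1), however, you route through part (2) with $E = D_2$, whereas the paper proves (1) directly using the \emph{disjoint fibers} $\{s \in A : B_s = B\}$ for $B \in \cP(Y_2)$ (a genuine partition of $A$), rather than the overlapping sets $\{s \in A : B \subseteq B_s\}$. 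This is a real if small difference: the paper's direct argument for (1) is what underlies the remark immediately following the claim, that in part (1) the hypothesis ``$D_1$ is $(\le \cP(Y_2))$-complete'' may be read in the weaker sense of ``complete for partitions,'' a refinement that your reduction through (2) forgoes, since there the covering is not a partition. In exchange you get a single unified argument. The rest of your writeup — checking $A \in J_1^+$ from $A \in D_1$ via Definition~\ref{z0.27}, and noting that $\text{AC}_E$ follows from $\text{AC}_{\cP(Y_2)}$ by padding so that \ref{z0.29}(1) applies — is bookkeeping the paper leaves implicit, and it is correct.
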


\begin{remark}
For part (1) in the definition of $(\le \cP(Y_2))$-complete 
we can use just partitions, but not so in part (2).
\end{remark}

\begin{PROOF}{\ref{c3.10}}  
1)  So let $A \in D_1$ and $\bar B = \langle B_s:s \in
A\rangle \in {}^A(D_2)$ and $J_1 = J[f_1,D_1]$ for some $f_1 \in
 {}^Y\text{Ord}$ be given.  So $s \mapsto B_s$
is a function from $A \in D_1$ to $D_2 \subseteq {\cP}(Y_2)$ hence
as AC$_{\cP(Y_2)}$ is assumed recalling that by \ref{z0.29}(1) 
the ideal $J_1$ on $Y_1$ 
is $(\le {\cP}(Y_2))$-complete, there is $B_* \in D_2$ such 
that $A_* := \{s \in A:B_s =B_*\} \in J^+_1$.  Clearly $A_*,B_*$ are
as required.

\noindent
2) For $B \in E$ let $A^*_B = \{s \in A:B \subseteq B_s\}$, so
clearly $\langle A^*_B:B \in E\rangle$ is a sequence of subsets of
$A \in D_1$ with union $A$, so again by \ref{z0.29}(1) for 
some $B_* \in E$ the set $A_* :=
 \{s \in A:B_* \subseteq B_s\}$ belongs to $J^+_1$, so we are done. 
\end{PROOF}

\begin{claim}
\label{c3.14}  {\rm rk}$_{D_1}(f) \le \text{\rm rk}_{D_2}(g)$
\und{when}:
\medskip

\noindent
\begin{enumerate}
\item[$\oplus$]   as in \ref{c3.3}
\end{enumerate}
but we replace clause $(\boxplus)$ there by
\medskip

\noindent
\begin{enumerate}
\item[$\boxplus'$]  $(a) \quad D_2$ does 4-commute with $D_1$
\smallskip

\noindent
\item[${{}}$]  $(b) \quad$ {\rm AC}$_{Y_1}$ holds.
\end{enumerate}
\end{claim}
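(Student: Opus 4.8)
The plan is to repeat, almost verbatim, the induction on the ordinal $\zeta$ used in the proof of \ref{c3.3}: I would prove $\boxdot_\zeta$, namely that if $\oplus+\boxplus'$ hold for $D_1,D_2,f,g,\bar f,\bar g$ and ${\rm rk}_{D_1}(f)\ge\zeta$ then ${\rm rk}_{D_2}(g)\ge\zeta$. The cases $\zeta=0$ and $\zeta$ limit are immediate. For $\zeta=\xi+1$, set $A:=\{s\in Y_1:f(s)>0\}$; by \ref{z0.23}(4), $A\in D_1$, and for $s\in A$, from $f(s)={\rm rk}_{D_2}(f_s)>0$ and \ref{z0.23}(4) again, $B_s:=\{t\in Y_2:f_s(t)>0\}\in D_2$, so $\langle B_s:s\in A\rangle\in{}^A(D_2)$, exactly as in $(*)_1$--$(*)_4$ of \ref{c3.3}.

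The only real change is at the invocation of commutation. Instead of 2-commute I would apply that $D_2$ does $4$-commute with $D_1$ to $A$, $\langle B_s:s\in A\rangle$, the ideal $J_2:=J[g,D_2]$, and the sequence $\bar J^1:=\langle J[g_t,D_1]:t\in Y_2\rangle$, which exists since it is defined outright from $\bar g$ (no choice needed). This produces $A_*,B_*$ with $B_*\in J_2^+$, with $t\in B_*\Rightarrow A_*\in(J[g_t,D_1])^+$, and with $(s,t)\in A_*\times B_*\Rightarrow s\in A\wedge t\in B_s$; in particular $A_*\subseteq A$ and $A_*\in D_1^+$. As in \ref{c3.3}, \ref{z0.29}(4) applied to $J_2$ gives $B_*\in D_2^+$ and ${\rm rk}_{D_2+B_*}(g)={\rm rk}_{D_2}(g)$; the new point, and the whole reason for using $4$-commute, is that \ref{z0.29}(4) applied to $J[g_t,D_1]$ gives
\[
{\rm rk}_{D_1+A_*}(g_t)={\rm rk}_{D_1}(g_t)=g(t)\quad\text{for every }t\in B_*,
\]
which is what replaces the identity $D_1+A_*=D_1$ available in \ref{c3.3} (cf. the parenthetical after $(*)_{13}$ there).

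From here I would run the rest exactly as in \ref{c3.3}, with $D_1+A_*$ for $D_1$ and $D_2+B_*$ for $D_2$: using ${\rm rk}_{D_1}(f)\ge\xi+1$ pick (by \ref{z0.23}(2)) $f'<_{D_1}f$ with ${\rm rk}_{D_1}(f')\ge\xi$ and $f'(s)<f(s)$ on $A$; for each $s$ pick, using \ref{z0.23}(2) and $f'(s)<f(s)={\rm rk}_{D_2}(f_s)\le{\rm rk}_{D_2+B_*}(f_s)$, a function $f'_s$ with ${\rm rk}_{D_2+B_*}(f'_s)=f'(s)$ and $f'_s$ pointwise below $f_s$ where $f_s>0$; form $\bar f'=\langle f'_s:s\in Y_1\rangle$ using ${\rm AC}_{Y_1}$; set $g'_t(s):=f'_s(t)$ and $g'(t):={\rm rk}_{D_1+A_*}(g'_t)$. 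For $s\in A_*,t\in B_*$ one gets $g'_t(s)=f'_s(t)<f_s(t)=g_t(s)$, hence $g'_t<_{D_1+A_*}g_t$ and, by \ref{z0.23}(3) together with the displayed equality, $g'(t)<g(t)$; thus $g'<_{D_2+B_*}g$. Applying $\boxdot_\xi$ to $(D_1+A_*,D_2+B_*,f',g',\bar f',\bar g',\xi)$ — after checking clauses (a)--(g) of $\oplus$ and (a),(b) of $\boxplus'$ for this restricted instance, where throughout one only ever uses $A_*\in D_1^+$ (e.g. ${\rm rk}_{D_1+A_*}(f')\ge{\rm rk}_{D_1}(f')\ge\xi$ by \ref{z0.24}(2)) — yields ${\rm rk}_{D_2+B_*}(g')\ge\xi$, and then ${\rm rk}_{D_2}(g)={\rm rk}_{D_2+B_*}(g)>{\rm rk}_{D_2+B_*}(g')\ge\xi$ by \ref{z0.23}(3), i.e. ${\rm rk}_{D_2}(g)\ge\zeta$, closing the induction.

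The one genuinely new ingredient is the deliberate choice of the auxiliary sequence $\bar J^1=\langle J[g_t,D_1]:t\in Y_2\rangle$ fed into $4$-commute: it is precisely $A_*\in(J[g_t,D_1])^+$ that, via \ref{z0.29}(4), pins ${\rm rk}_{D_1+A_*}(g_t)$ to $g(t)$ and lets the rank comparison survive the passage from $D_1$ to $D_1+A_*$. The main obstacle I anticipate is therefore not conceptual but bookkeeping: confirming that $\oplus+\boxplus'$ descend to the restricted filters $D_1+A_*,D_2+B_*$ — the proof of \ref{c3.3} already signals that ``$A_*\in D_1^+$ suffices'' at each use — and in particular that $D_2+B_*$ still $4$-commutes with $D_1+A_*$, which should reduce to the corresponding property of $D_1,D_2$ by choice-free manipulation of the relevant ideals $J[\cdot,D_1+A_*]$, $J[\cdot,D_2+B_*]$ and of the $(D_2+B_*)$-sequences.
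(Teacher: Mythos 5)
Your proposal is correct and follows the paper's own proof essentially step for step: the paper also instantiates the $4$-commute property with $\bar J^1=\langle J[g_t,D_1]:t\in Y_2\rangle$ and $J_2=J[g,D_2]$, records exactly the conclusions you list in place of $(*)_5$ of Claim \ref{c3.3} (including $A_*\in D^+_1$ and $t\in B_*\Rightarrow {\rm rk}_{D_1+A_*}(g_t)={\rm rk}_{D_1}(g_t)=g(t)$), redefines $g'(t):={\rm rk}_{D_1+A_*}(g'_t)$, and then rederives $g'<_{D_2+B_*}g$ before invoking the induction hypothesis with $D_1+A_*,D_2+B_*$. Your closing caveat about checking that $\oplus+\boxplus'$ descend to the restricted filters is the same point the paper leaves implicit (it merely notes that $A_*\in D^+_1$ suffices where $A_*\in D_1$ was used in \ref{c3.3}), so you have identified precisely the change of detail the paper itself highlights.
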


\begin{PROOF}{\ref{c3.14}}
We repeat the proof of \ref{c3.3} but:
\medskip

\noindent
\und{First change}: we replace $(*)_5$ and the paragraph before it by
the following:

So $\bar B = \langle B_s:s \in A\rangle \in {}^A(D_2)$.

Recall that $D_2$ does 4-commute with $D_1$, apply this to $A,\langle
B_s:s \in A\rangle,\bar J^1 = \langle J^1_t:t \in Y_2\rangle$ where $J^1_t
:= J[g_t,D_1],J_2 := J[g,D_2]$ and we get $A_*,B_*$ such that:
\medskip

\noindent
\begin{enumerate}
\item[$(*)'_5$]  $(a) \quad A_* \in D^+_1$ and $A_* \subseteq A$
\smallskip

\noindent
\item[${{}}$]   $(b) \quad B_* \in J^+_2$ hence $B_* \in D^+_2$ and
rk$_{D_2+B_*}(g) = \text{ rk}_{D_2}(g)$
\smallskip

\noindent
\item[${{}}$]   $(c) \quad (s,t) \in A_* \times B_* \Rightarrow s
\in A \wedge t \in B_s$
\smallskip

\noindent
\item[${{}}$]   $(d) \quad$ if $t \in B_*$ then $A_* \in (J^1_t)^+$
hence 
\newline

\hskip25pt $t \in B_* \Rightarrow \text{ rk}_{D_1+A_*}(g_t) 
= \text{ rk}_{D_1}(g_t) = g(t)$.
\end{enumerate}
\medskip

\noindent
\und{Second change}: we replace $(*)_{12}$ and the line before, the line
after it and $(*)_{13}$ by:

Define $g' \in {}^{Y_2}\text{Ord}$ by $g'(t) = \text{ rk}_{D_1 + A_*}(g'_t)$.  

Now
\medskip

\noindent
\begin{enumerate}
\item[$(*)_{12}'$]   if $t \in B_*$ then
\begin{enumerate}
\item[$(a)$]   $g'_t <_{D_1+A_*} g_t$, by $(*)_{11}$
\smallskip

\noindent
\item[$(b)$]   rk$_{D_1+A_*}(g'_t) < \text{ rk}_{D_1+A_*}(g_t)$
by (a) and \ref{z0.23}(3),
\smallskip

\noindent
\item[$(c)$]  rk$_{D_1+A_*}(g_t) = \text{ rk}_{D_1}(g_t)$ 
recalling $(*)'_5(d)$ and $J^1_t = J[g_t,D_1]$ hence
\smallskip

\noindent
\item[$(d)$]   rk$_{D_1}(g_t) = g(t)$ by clause (d) of $\oplus$
\sn
\item[$(e)$]   rk$_{D_1+A_*}(g_t) = g(t)$ by (c), (d) above hence
\sn
\item[$(f)$]  $g'(t) < g(t)$ by the choice of $g'$, clause (b) and
clause (e).
\end{enumerate}
\end{enumerate}
\mn
Hence by $(*)'_{12}(f)$ we have
\mn
\begin{enumerate}
\item[$(*)'_{13}$]  $g' < g$ mod $D_2 + B_*$.
\end{enumerate}
\medskip

\noindent
Concerning the rest, we quote $(*)_5(b)$ twice but $(*)'_5(b)=
(*)_5(b)$, and quote $(*)_5(a)$ twice but noted there that $(*)'_5(a)$
suffice and $g'$ is defined before $(*)'_{12}$ rather than apply $(*)_{12}$.
\end{PROOF}

\section{Rank systems and A Relative of GCH} 

To phrase our theorem we need to define the framework.

\begin{definition}
\label{m4.3}
Main Definition:   We say that $\bold p = 
(\bbD,\text{rk},\Sigma,\bold j,\mu) = (\bbD_{\bold p},
\text{rk}_{\bold p},\Sigma_{\bold p},\bold j_{\bold p},\mu_{\bold p})$
is a weak (rank) 1-system \und{when}:
\medskip

\noindent
\begin{enumerate}
\item[$(a)$]    $\mu$ is singular
\smallskip

\noindent
\item[$(b)$]   each $\bold d \in \bbD$ is (or just we can compute
from it) a pair $(Y,D) = (Y_{\bold d},D_{\bold d}) = (Y[\bold
d],D_{\bold d}) = (Y_{\bold p,\bold d},D_{\bold p,\bold d})$ such that:
\begin{enumerate}
\item[$(\alpha)$]   $\theta(Y_{\bold d}) < \mu$, on $\theta(-)$
see Definition \ref{z0.15}
\smallskip

\noindent
\item[$(\beta)$]   $D_{\bold d}$ is a filter on $Y_{\bold d}$
\end{enumerate}
\item[$(c)$]   for each $\bold d \in \bbD$, a definition
of a function rk$_{\bold d}(-)$ with domain ${}^{Y[\bold d]}\text{Ord}$ and
range $\subseteq \text{ Ord}$, that is rk$_{\bold p,\bold d}(-)$ or 
rk$^{\bold p}_{\bold d}(-)$ 
\smallskip

\noindent
\item[$(d)$]   $(\alpha) \quad \Sigma$ is a function with
domain $\bbD$ such that $\Sigma({\bold d}) \subseteq \bbD$
\smallskip

\noindent
\item[${{}}$]   $(\beta) \quad$ if ${\bold d} \in \bbD$
and $\bold e \in \Sigma(\bold d)$ then $Y_{\bold e} = 
Y_{\bold d}$ [natural to add $D_{\bold d} \subseteq D_{\bold e}$, 
\newline

\hskip25pt this is not demanded but see \ref{m4.9}(2)]
\smallskip

\noindent
\item[$(e)$]   $(\alpha) \quad \bold j$ is a function from $\bbD$
onto cf$(\mu)$
\smallskip

\noindent
\item[${{}}$]  $(\beta) \quad$ let $\bbD_{\ge i} =
\{\bold d \in \bbD:\bold j(\bold d) \ge i\}$ and $\bbD_i = 
\bbD_{\ge i} \backslash \bbD_{i+1}$
\smallskip

\noindent
\item[${{}}$]   $(\gamma) \quad \bold e \in \Sigma(\bold d)
\Rightarrow \bold j(\bold e) \ge \bold j(\bold d)$
\smallskip

\noindent
\item[$(f)$]   for every $\sigma < \mu$ for some 
$i < \text{ cf}(\mu)$, if $\bold d \in \bbD_{\ge i}$, \und{then}
$\bold d$ is $(\bold p,\le \sigma)$-complete where:
\begin{enumerate}
\item[$(*)$]    we say that $\bold d$ is
$(\bold p,\le X)$-complete (or $(\le X)$-complete for $\bold p$) 
when: if $f \in {}^{Y[\bold d]}\text{Ord}$ 
and $\zeta = \text{ rk}_{\bold d}(f)$ and $\langle A_j:j \in X\rangle$ 
a partition\footnote{as long as $\sigma$ is a well
ordered set it does not matter whether we use a partition or just a
covering, i.e. 
$\cup\{A_j:j \in \sigma\} = Y_{\bold d}$} of $Y_{\bold d}$, \und{then}
for some $\bold e \in \Sigma(\bold d)$ and $j < \sigma$ we have $A_j
\in D_{\bold e}$ and $\zeta = \text{ rk}_{\bold e}(f)$; so this is not
the same as ``$D_{\bold d}$ is $(\le X)$-complete"; we define $(\bold
p,|X|^+)$-complete, i.e. $(\bold p,<|X|^+)$-complete similarly
\end{enumerate} 
\item[$(g)$]   no hole\footnote{we may use another function $\Sigma$
here, as in natural examples here we use $\Sigma(\bold d) = \{\bold d\}$ and
not so in clause (f)}: if rk$_{\bold d}(f) > \zeta$ then for some pair
$(\bold e,g)$ we have: $\bold e \in \Sigma(\bold d)$ and 
$g <_{D[\bold e]} f$ and rk$_{\bold e}(g) = \zeta$
\smallskip

\noindent
\item[$(h)$]   if $f=g+1$ mod $D_{\bold d}$ \und{then} rk$_{\bold d}(f) =
\text{ rk}_{\bold d}(g)+1$
\smallskip

\noindent
\item[$(i)$]    if $f \le g$ mod $D_{\bold d}$ \und{then} rk$_{\bold d}(f) \le
\text{ rk}_{\bold d}(g)$.
\end{enumerate}
\end{definition}

\begin{definition}
\label{m4.4}  
1) We say $\bold p = (\bbD,\text{rk},\Sigma,\bold j,\mu)$ is 
a weak (rank) 2-system, (if we write system we mean 2-system)
\und{when} in \ref{m4.3} we replace clauses (d),(f),(g) by:
\medskip

\noindent
\begin{enumerate}
\item[$(d)'$]   $(\alpha) \quad \Sigma$ is a function with domain
$\bbD$
\smallskip

\noindent
\item[${{}}$]   $(\beta) \quad$ for $\bold d \in \bbD$ we have 
$\Sigma(\bold d) \subseteq \{(\bold e,h):\bold e \in 
\bbD_{\ge \bold j(\bold d)}$ and $h:Y_{\bold e} \rightarrow Y_{\bold d}\}$;

\hskip25pt writing $\bold e \in \Sigma(\bold d)$ means
then $(\bold e,h) \in \Sigma(\bold d)$ for some function $h$
\sn
\item[$(f)'$]   for every $\sigma < \mu$ for some 
$i < \text{ cf}(\mu)$, if $\bold d \in \bbD_{\ge i}$, \und{then}
$\bold d$ is $(\bold p,\le \sigma)$-complete where:
\begin{enumerate}
\item[$(*)$]    we say that $\bold d$ is $(\bold p,\le X)$-complete 
(for $\bold p$) when: if $f \in {}^{Y[\bold d]}\text{Ord}$ 
and $\zeta = \text{ rk}_{\bold d}(f)$ and $\langle A_j:j \in X\rangle$ 
a partition\footnote{as long as $\sigma$ is a well
ordered set it does not matter whether we use a partition or just a
covering, i.e. 
$\cup\{A_j:j \in \sigma\} = Y_{\bold d}$} of $Y_{\bold d}$, \und{then}
for some $(\bold e,h) \in \Sigma(\bold d)$ and $j < \sigma$ we have 
$h^{-1}(A_j) \in D_{\bold e}$ and $\zeta = \text{ rk}_{\bold e}(f \circ h)$;
we define ``$(\bold p,|X|^+)$-complete" similarly
\end{enumerate}
\item[$(g)'$]   no hole: if rk$_{\bold d}(f) > \zeta$ then for some
$(\bold e,h) \in \Sigma(\bold d)$ and $g \in {}^{Y[\bold
e]}\text{Ord}$ we have $g < f \circ h$ mod $D_{\bold e}$ and
rk$_{\bold e}(g) = \zeta$.
\end{enumerate}
\end{definition}

\begin{dc}
\label{m4.4d}
Let $\bold p$ be a weak rank 1-system; we can 
define $\bold q$ and prove it is a weak
rank 2-system by $\bbD_{\bold q} = \bbD_{\bold p}$, rk$_{\bold q} =
 \text{ rk}_{\bold p},\Sigma_{\bold q}(\bold d) = \{(\bold
 e,\text{id}_{Y[\bold d]}):\bold e \in \Sigma_{\bold p}(\bold d)\},\bold
 j_{\bold q} = \bold j_{\bold p},\mu_{\bold q} = \mu_{\bold p}$.
\end{dc}

\begin{convention}
\label{m4.5} 
1) We use $\bold p$ only for systems as in
Definition \ref{m4.3} or \ref{m4.4}.

\noindent
2) We may not distinguish $\bold p$ and $\bold q$ in \ref{m4.4d} so
deal only with 2-systems.
\end{convention}

\begin{remark}
\label{m4.5d}  
The following is an alternative to Definition \ref{m4.4}.  As in
\ref{m4.3} we can demand $\bold e \in \Sigma(\bold d) \Rightarrow Y_{\bold e} =
Y_{\bold d}$ but for every $\bold d$ we have a family $\cE_{\bold d}$,
i.e. the function $\bold d \mapsto \cE_{\bold d}$ is part of $\bold p$
and make the following additions and changes: 
\medskip

\noindent
\begin{enumerate}
\item[$(\alpha)$]   ${\cE}_{\bold d}$ is a family of equivalence 
relations on $Y_{\bold d}$
\smallskip

\noindent
\item[$(\beta)$]   we replace ${}^{Y[\bold d]}\text{Ord}$ by $\{f \in
{}^{Y[\bold d]}\text{Ord}$: eq$(f) := \{(s,t):s,t \in Y_{\bold d}$ and $f(s)
= f(t)\} \in {\cE}_{\bold d}\}$
\smallskip

\noindent
\item[$(\gamma)$]   if $E_1,E_2$ are equivalence relations on $Y_{\bold
d}$ such that $E_2$ refines $E_1$ \und{then} $E_2 \in {\cE}_{\bold
d} \Rightarrow E_1 \in {\cE}_{\bold d}$
\smallskip

\noindent
\item[$(\delta)$]    if $\bold e \in \Sigma(\bold d)$ then $Y_{\bold e} =
Y_{\bold d}$ and ${\cE}_{\bold d} \subseteq {\cE}_{\bold e}$.
\end{enumerate}
\end{remark}

\begin{definition}
\label{m4.6}  
For $\iota = 1,2$ we say that $\bold p = 
(\Bbb D,\text{rk},\Sigma,\bold j,\mu)$ is a strict $\iota$-system
\und{when} it satisfies clauses (a)-(i) from \ref{m4.3} or from \ref{m4.4}
and
\medskip

\noindent
\begin{enumerate}
\item[$(j)$]   for every $\bold d \in \bbD$ and $\zeta,\xi,f,j_0$ satisfying
$\boxplus$ below, there\footnote{can we make $j$ depend on $f$ (and a
partition of) $\bold d$?  Anyhow later we use $\bold d' \in
\Sigma(\bold d)$, if $\{\bold d\} \ne \Sigma(\bold d)$?  Also so
$\iota = 1,2$ may make a difference.} 
is $j < \text{ cf}(\mu)$ such that: there are no $\bold e,g$
satisfying $\oplus$ below, where:
\begin{enumerate}
\item[${{}}$]  $\oplus \quad \bullet_1 \quad \bold e \in \bbD_{\ge j}$
\smallskip

\noindent
\item[${{}}$]  $\qquad \bullet_2 \quad g \in  {}^{Y[\bold e]}\zeta$
\smallskip

\noindent
\item[${{}}$]  $\qquad \bullet_3 \quad \{g(t):t \in Y_{\bold e}\}
\subseteq [\xi,\zeta_*)$ for some $\zeta_* < \zeta$
\smallskip

\noindent
\item[${{}}$]  $\qquad \bullet_4 \quad j \ge j_0$
\smallskip

\noindent
\item[${{}}$]  $\qquad \bullet_5 \quad$ rk$_{\bold e}(g) \ge \zeta$,
\smallskip

\noindent
\item[${{}}$]  $\boxplus \quad \bullet_1 \quad f \in {}^{Y[\bold d]}\xi$
\smallskip

\noindent
\item[${{}}$]  $\qquad \bullet_2 \quad$ rk$_{\bold d}(f) = \zeta$
\smallskip

\noindent
\item[${{}}$]  $\qquad \bullet_3 \quad \xi < \zeta$
\smallskip

\noindent
\item[${{}}$]  $\qquad \bullet_4 \quad$ cf$(\mu) = \text{ cf}(\zeta)$
\smallskip

\noindent
\item[${{}}$]  $\qquad \bullet_5 \quad j_0 < \text{ cf}(\mu)$
\smallskip

\noindent
\item[${{}}$]  $\qquad \bullet_6 \quad s \in Y_{\bold d} 
\wedge \bold e' \in \Bbb D_{\ge j_0} \Rightarrow  
\text{ rk}_{\bold e'}(f(s)) = f(s)$.
\end{enumerate}
\end{enumerate}
\end{definition}

\begin{observation}
\label{m4.9d}    1) If $\bold p$ is a strict 
$\iota$-system \und{then} $\bold p$ is a weak $\iota$-system.

\noindent
2) In Definition \ref{m4.6}, from $(j)\boxplus \bullet_6$ recalling
$(j)\oplus \bullet_1 + \bullet_4$ we can deduce: rk$_{\bold
   e}(f(s))= f(s)$ for $s \in Y_{\bold d}$.

\noindent
3) In $\oplus \bullet_5$ of (j) of \ref{m4.6} \wilog \, rk$_{\bold
   e}(y) > \zeta + 7$ as we can use $g + 7$.
\end{observation}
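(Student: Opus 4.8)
\emph{Plan.} All three parts are immediate unwindings of the definitions, so the ``proof'' is essentially bookkeeping; I sketch each in turn and flag the one place (a range check in part (3)) where a hypothesis is actually used. For part (1): by Definition \ref{m4.6}, to be a strict $\iota$-system \emph{is} to satisfy clauses (a)--(i) of \ref{m4.3} (if $\iota=1$) or of \ref{m4.4} (if $\iota=2$) together with the extra clause (j); and satisfying exactly clauses (a)--(i) of \ref{m4.3} (resp. \ref{m4.4}) is the definition of a weak $1$-system (resp. weak $2$-system). So one simply discards clause (j), and there is nothing further to check.

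For part (2): argue inside the hypotheses of clause (j) of \ref{m4.6}. Fix $\bold d,\zeta,\xi,f,j_0$ as in $\boxplus$ and suppose $\bold e,g$ satisfy $\oplus$ --- this is the only setting in which the asserted equality is ever invoked. From $\oplus\bullet_1$ we have $\bold e\in\bbD_{\ge j}$, and from $\oplus\bullet_4$ we have $j\ge j_0$; since $\bbD_{\ge i}=\{\bold c\in\bbD:\bold j(\bold c)\ge i\}$ by clause (e)$(\beta)$ of \ref{m4.3}, the family $\bbD_{\ge i}$ is $\subseteq$-decreasing in $i$, so $\bold e\in\bbD_{\ge j_0}$. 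Now instantiate $\boxplus\bullet_6$ with $\bold e':=\bold e$: for every $s\in Y_{\bold d}$ one gets $\text{rk}_{\bold e}(f(s))=f(s)$, where, as throughout, the ordinal $f(s)$ (which is $<\xi$ by $\boxplus\bullet_1$) is read as the constant function of that value on $Y_{\bold e}$. That is the assertion.

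For part (3): let $\bold e,g$ witness $\oplus$ of clause (j) for the given $\bold d,\zeta,\xi,j_0$; I will replace $g$ by $g':=g+8$ (pointwise ordinal addition of the constant $8$; the displayed $g+7$ yields only $\text{rk}_{\bold e}(g')\ge\zeta+7$, which is the version one normally uses). Clauses $\bullet_1,\bullet_4$ concern only $\bold e$ and $j$ and are unchanged. For $\bullet_2,\bullet_3$: by $\bullet_3$ pick $\zeta_*<\zeta$ with $\xi\le g(t)<\zeta_*$ for all $t\in Y_{\bold e}$; since $\text{cf}(\zeta)=\text{cf}(\mu)$ by $\boxplus\bullet_4$ and $\mu$ is a singular cardinal (clause (a) of \ref{m4.3}), $\text{cf}(\zeta)$ is infinite, hence $\zeta$ is a limit ordinal, so $\zeta_*+8<\zeta$. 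Therefore $\xi\le g'(t)=g(t)+8<\zeta_*+8<\zeta$ for all $t$, i.e.\ $g'\in{}^{Y[\bold e]}\zeta$ and $\{g'(t):t\in Y_{\bold e}\}\subseteq[\xi,\zeta_*+8)$ with $\zeta_*+8<\zeta$. Finally, for $\bullet_5$, write $g'$ as the function $g$ with the constant $1$ added eight times and apply clause (h) of \ref{m4.3} that many times: $\text{rk}_{\bold e}(g')=\text{rk}_{\bold e}(g)+8\ge\zeta+8>\zeta+7$. So $g'$ still witnesses $\oplus$, now with the stronger lower bound on its rank, which is exactly what ``without loss of generality'' means here.

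The only step that uses a hypothesis in an essential way is the range estimate in part (3) --- it is precisely why $\text{cf}(\zeta)=\text{cf}(\mu)$ is demanded in $\boxplus$ --- and even there the computation is entirely routine; so there is no real obstacle in any of the three parts.
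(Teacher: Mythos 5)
The paper supplies no proof of Observation \ref{m4.9d}; all three parts are treated as immediate consequences of the definitions, and your write-up correctly carries out that bookkeeping. Part (1) is indeed just "strict $=$ weak $+$ clause (j)", and part (2) is the right instantiation of $\boxplus\bullet_6$ with $\bold e'=\bold e$, once one observes that $\bold e\in\bbD_{\ge j}\subseteq\bbD_{\ge j_0}$ follows from $\oplus\bullet_1$, $\oplus\bullet_4$, and the fact (from clause (e)$(\beta)$ of \ref{m4.3}) that $\bbD_{\ge i}$ is $\subseteq$-decreasing in $i$.

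In part (3) you did something useful: you caught a genuine off-by-one in the paper's wording. From $\oplus\bullet_5$ (which gives $\text{rk}_{\bold e}(g)\ge\zeta$) and clause (h), using $g+7$ only yields $\text{rk}_{\bold e}(g+7)=\text{rk}_{\bold e}(g)+7\ge\zeta+7$, a weak inequality, while the Observation asserts the strict bound $>\zeta+7$. Your replacement $g+8$ (or, equivalently, reading the paper's claim as $\ge\zeta+7$, which is the version actually used downstream) is the correct repair. Your verification of $\bullet_2,\bullet_3$ for $g+8$ is also the step that actually uses a hypothesis: $\zeta_*+8<\zeta$ because $\text{cf}(\zeta)=\text{cf}(\mu)$ is infinite, hence $\zeta$ is a limit ordinal; and $g+8\ge g\ge\xi$ preserves the lower bound of the range. (You might also note that the paper's ``$\text{rk}_{\bold e}(y)$'' is surely a typo for ``$\text{rk}_{\bold e}(g)$'', which you implicitly and correctly read it as.) Overall the proposal is correct and does exactly what the paper expects the reader to do.
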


\begin{definition}
\label{m4.9} 
1) We say that a weak $\iota$-system $\bold p$ is weakly normal \und{when}:
\medskip

\noindent
\begin{enumerate}
\item[$\bullet_1$]   in $(d)(\beta)$ of Definition 
\ref{m4.3} we add $\bold e \in \Sigma(\bold d) 
\Rightarrow D_{\bold d} \subseteq D_{\bold e}$
\smallskip

\noindent
\item[$\bullet_2$]   in $(d)'(\beta)$ of Definition \ref{m4.4} we
add: if $(\bold e,h) \in \Sigma(\bold d)$ then $(\forall A \in
D_{\bold d})(h^{-1}(A) \in D_{\bold e})$.
\end{enumerate}
\medskip

\noindent
2) We say $\bold p$ is normal \when \, it is weakly normal and
\medskip

\noindent
\begin{enumerate}
\item[$\bullet_3$]  in Definition \ref{m4.3}, if $A \in D^+_{\bold d},
\bold d \in \bbD,f \in {}^{Y[\bold d]}\text{Ord}$ and $\zeta = \text{
rk}_{\bold d}(f)$ \then \, for some $\bold e \in \Sigma(\bold d)$ we have
$A \in D_{\bold e}$ and rk$_{\bold e}(f) = \zeta$
\sn
\item[$\bullet_4$]  in Definition \ref{m4.4}, if $\bold d \in \bbD,
f \in {}^{Y[\bold d]}\text{Ord}$, rk$_{\bold d}(f) =\zeta$ and
$A \in D^+_{\bold d}$, then for some $(\bold e,h) \in \Sigma(\bold d)$ we have
$\{s \in Y_{\bold e}:h(s) \in A\} \in D_{\bold e}$ and rk$_{\bold e}(f
\circ h) = \zeta$.
\end{enumerate}
\mn
3) We say $\bold p$ is semi normal when it is weakly normal and we
have $\bullet'_3,\bullet'_4$ holds where they are as above just ending
with $\ge \zeta$.
\end{definition}

\begin{claim}
\label{m4.10} 
Assume $\bold p$ is a weak $\iota$-system and $\bold d \in \bbD_{\bold p}$.

\noindent
0) If $\bold p,\bold q$ are as in \ref{m4.4d}, \then \, $\bold p$ is
   [weakly] normal iff $\bold q$ is.

\noindent
1) If $f,g \in {}^{Y[\bold d]}\text{\rm Ord}$ and $f <_{D_{\bold d}} g$
\und{then} {\rm rk}$_{\bold d}(f) < \text{\rm rk}_{\bold d}(g)$.

\noindent
2) If $f \in {}^{Y[\bold d]} \text{\rm Ord}$ and 
{\rm rk}$_{\bold d}(f) > 0$ \und{then} $\{s \in Y_{\bold d}:f(s) 
>0\} \in D^+_{\bold d}$.

\noindent
2A) If in addition $\bold p$ is semi-normal then
$\{s \in Y_{\bold d}:f(s)= 0\} = \emptyset$ {\rm mod} $D_{\bold d}$.

\noindent
3) {\rm rk}$_{\bold d}(f)$ depends just on $f/D_{\bold d}$ (and 
$\bold d$ and, of course, $\bold p$).
\end{claim}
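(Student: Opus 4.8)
\textbf{Proof plan for Claim \ref{m4.10}.}

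The plan is to prove the parts roughly in the order stated, extracting everything from the abstract axioms (a)--(i) of Definition \ref{m4.3}/\ref{m4.4} in close analogy with the concrete rank computations of \S1 (especially Claim \ref{z0.23}). For part (0), I would simply unwind the translation in \ref{m4.4d}: since $\Sigma_{\bold q}(\bold d)=\{(\bold e,\mathrm{id}_{Y[\bold d]}):\bold e\in\Sigma_{\bold p}(\bold d)\}$ and $h=\mathrm{id}$ gives $h^{-1}(A)=A$ and $f\circ h=f$, the conditions $\bullet_1$ vs.\ $\bullet_2$ and $\bullet_3$ vs.\ $\bullet_4$ of Definition \ref{m4.9} literally coincide; similarly for the semi-normal variants, so no real content here. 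For part (3), the point is that by clause (i) applied in both directions, $f=g$ mod $D_{\bold d}$ forces $\mathrm{rk}_{\bold d}(f)\le\mathrm{rk}_{\bold d}(g)$ and $\mathrm{rk}_{\bold d}(g)\le\mathrm{rk}_{\bold d}(f)$, hence equality; this is the trivial observation that should probably be stated first and then used freely in the rest.

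For part (1), I would prove $\mathrm{rk}_{\bold d}(f)<\mathrm{rk}_{\bold d}(g)$ when $f<_{D_{\bold d}}g$ by the same argument as \ref{z0.23}(3), but phrased via the axioms: suppose $\mathrm{rk}_{\bold d}(g)=\zeta$; since $f<_{D_{\bold d}}g$ is witnessed on a $D_{\bold d}$-set, I want to produce some $\zeta'<\zeta$ with $\mathrm{rk}_{\bold d}(f)=\zeta'$. The natural route is a subsidiary induction (on $\mathrm{rk}_{\bold d}(g)$) showing ``if $f\le_{D_{\bold d}}g$ then $\mathrm{rk}_{\bold d}(f)\le\mathrm{rk}_{\bold d}(g)$, with strict inequality when $f<_{D_{\bold d}}g$'', using clause (g) (no hole) to descend below $\mathrm{rk}_{\bold d}(g)$ and clause (h) to handle successor steps, plus part (3) to replace $g$ by a pointwise-$\le$ representative. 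Alternatively, and more cheaply, one can lean on clause (h): if $f<_{D_{\bold d}}g$ then $f\le_{D_{\bold d}} g'$ where $g'+1=g$ mod $D_{\bold d}$ pointwise is \emph{not} generally available without choice, so I expect the honest argument really does need the no-hole clause (g) together with (i); I would run it as: by (g) with $\zeta:=\mathrm{rk}_{\bold d}(g)-$-no, rather, I use that $f<_{D_{\bold d}}g$ gives (via (g) applied to $g$ at the value $\mathrm{rk}_{\bold d}(f)$, if $\mathrm{rk}_{\bold d}(f)<\mathrm{rk}_{\bold d}(g)$ were to fail) a contradiction — so the clean formulation is: show by induction on $\zeta$ that $\mathrm{rk}_{\bold d}(g)=\zeta\wedge f<_{D_{\bold d}}g\Rightarrow\mathrm{rk}_{\bold d}(f)<\zeta$, using (g)+(i)+(h)+(3).

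For part (2), if $\mathrm{rk}_{\bold d}(f)>0$ then $f$ is not $=0$ mod $D_{\bold d}$: indeed the constant function $0$ has $\mathrm{rk}_{\bold d}(0)=0$ by (g)+(h) (or directly: $0$ has no $<_{D_{\bold d}}$-smaller function, so its rank is $0$), so if $f=_{D_{\bold d}}0$ then by part (3) $\mathrm{rk}_{\bold d}(f)=0$, contradiction; hence $\{s:f(s)>0\}=\{s:f(s)\ne 0\}\in D^+_{\bold d}$. For part (2A), under semi-normality I would argue by contradiction: if $A:=\{s:f(s)=0\}\in D^+_{\bold d}$, apply $\bullet'_3$ (resp.\ $\bullet'_4$) to get $\bold e\in\Sigma(\bold d)$ with $A\in D_{\bold e}$ and $\mathrm{rk}_{\bold e}(f)\ge\zeta:=\mathrm{rk}_{\bold d}(f)>0$ (in the $2$-system case, $h^{-1}(A)\in D_{\bold e}$ and $\mathrm{rk}_{\bold e}(f\circ h)\ge\zeta$); but $A\in D_{\bold e}$ says $f=_{D_{\bold e}}0$ (resp.\ $f\circ h=_{D_{\bold e}}0$), so by part (3) for $\bold e$ its rank is $0<\zeta$, contradiction. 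The main obstacle I anticipate is getting part (1) honestly from the axioms without accidentally invoking a pointwise predecessor (which would need a choice that the abstract setting does not grant): the fix is to always pass through clause (g) for the ``no hole'' descent and clause (i)/(h) for monotonicity and successors, and to invoke part (3) whenever I need to normalize a representative — these are exactly the abstract shadows of Claim \ref{z0.23}(2A),(3),(5).
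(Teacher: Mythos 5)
Your decomposition (prove (0) and (3) first, then use (3) freely, handle (1) via monotonicity and successor, handle (2)/(2A) via no-hole and normality) mirrors the paper's, and parts (0), (3), (2A) are essentially the paper's arguments. But there are two points where the proposal goes wrong or overcomplicates. For part (1), you consider the (h)+(i) route and then reject it because ``$g'$ with $g'+1=g$ mod $D_{\bold d}$ pointwise is not generally available without choice,'' falling back on a more elaborate induction through clause (g). The worry is misplaced: the paper works with the \emph{successor} of $f$, not a predecessor of $g$. Set $f_1(s):=f(s)+1$; this is a pointwise definition requiring no choice. Then $f<_{D_{\bold d}}g$ literally says $\{s:f(s)+1\le g(s)\}\in D_{\bold d}$, i.e.\ $f_1\le_{D_{\bold d}}g$, so clause (i) gives $\mathrm{rk}_{\bold d}(f_1)\le\mathrm{rk}_{\bold d}(g)$, while clause (h) gives $\mathrm{rk}_{\bold d}(f_1)=\mathrm{rk}_{\bold d}(f)+1$. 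That is the entire proof, and it is both simpler and more robust than the induction on $\zeta$ you sketch.

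For part (2), your parenthetical ``or directly: $0$ has no $<_{D_{\bold d}}$-smaller function, so its rank is $0$'' is not valid in the abstract setting: $\mathrm{rk}_{\bold d}$ is an arbitrary function supplied as part of $\bold p$, not defined by the recursion of Definition \ref{z0.21}, so one cannot read off $\mathrm{rk}_{\bold d}(0)=0$ from the absence of smaller functions. The only tool available is the no-hole clause (g)/(g)$'$, exactly as the paper uses it (and as your main route does): if $f=_{D_{\bold d}}0$ then $\mathrm{rk}_{\bold d}(f)=\mathrm{rk}_{\bold d}(0)$ by (3), and applying (g)$'$ with $\zeta=0$ to the constant-$0$ function would produce $g<0\circ h$ on a $D_{\bold e}$-set, impossible for ordinal-valued functions. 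Keep that version and drop the ``directly'' aside. With these two corrections the proposal coincides with the paper's proof.
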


\begin{PROOF}{\ref{m4.10}}  
0) Easy; note that by this part, below \wilog \, $\iota_{\bold p} = 2$.

\noindent
1) Let $f_1 \in {}^{Y[\bold d]}\text{Ord}$ be defined by
$f_1(s) = f(s)+1$.   So clearly $f_1 \le_{D_{\bold d}} g$ hence by
clause (i) of \ref{m4.3} (equivalently \ref{m4.4})
 we have rk$_{\bold d}(f_1) \le \text{ rk}_{\bold d}(g)$.  
Also $f_1 = f+1$ mod $D_{\bold d}$ hence by clause
(h) of \ref{m4.3} (equivalently \ref{m4.4}) we have 
rk$_{\bold d}(f_1) = \text{ rk}_{\bold d}(f) +1$.  
The last two sentences together give the desired conclusion.

\noindent
2) Toward contradiction assume the conclusion fails.  Let $f' \in
{}^{Y_{\bold d}}$Ord be constantly zero, so $f=f'$ mod $D_{\bold d}$ 
hence by part (3) we have rk$_{\bold d}(f') = \text{ rk}_{\bold d}(f) > 0$.  
By clause $(g)'$ of Definition \ref{m4.4}, the ``no hole" applied to
$(f',\bold d)$, there is a triple $(\bold e,h,g)$ as 
there, so $B := \{s:s \in Y_{\bold e}$ and
 $g(s) < f(h(s))\} \in D_{\bold e}$, i.e. $\{s \in Y_{\bold e}:g(s) <
   0\} \in D_{\bold e}$, contradiction.

Hence by the weak normality of
$\bold p$ we have $\{h(s):s \in B\} \ne
\emptyset$ mod $D_{\bold d}$ but this set includes 
$\{s \in Y_{\bold d}:f(s) > 0\}$.

\noindent
2A) Let $A = \{s \in Y_{\bold a}:f(s) = 0\}$, so toward contradiction
assume $A \in D^+_{\bold d}$.  As $\bold p$ is semi-normal we can find
a pair $(\bold e,h) \in \Sigma(\bold d)$ as in $\bullet'_4$ of
\ref{m4.9}(3) so $A_1 = \{t \in Y_{\bold e}:h_1(t) \in A\} \in
D_{\bold e}$ and rk$_{\bold e_1}(f \circ h) \ge \text{ rk}_{\bold
d}(f) > 0$, but clearly $\{t \in Y_{\bold e}:(f \circ h)(t) =0\} \in
D_{\bold e}$, contradiction by part (2).

\noindent
3) Use clause (i) of Definition \ref{m4.3} twice.
\end{PROOF}

\begin{theorem}
\label{m4.12}  {[\rm ZF]}  
Assume that $\bold p = (\bbD$,{\rm rk},$\Sigma,\bold j,\mu)$
is a strict rank 1-system (see Main Definition \ref{m4.3}) or just a
 strict 2-system.  \und{Then}
for every ordinal $\zeta$ there is $i < \text{\rm cf}(\mu)$
such that: if $\bold d \in \Bbb D_{\ge i}$ \und{then}
{\rm rk}$_{\bold d}(\zeta) = \zeta$, i.e. {\rm rk}$_{\bold d}(\langle
\zeta:s \in Y_{\bold d}\rangle) = \zeta$.
\end{theorem}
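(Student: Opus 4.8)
The plan is to prove, by induction on the ordinal $\zeta$, the auxiliary statement
\[
(\star_\zeta)\colon\quad\text{there is }i(\zeta)<\cf(\mu)\text{ such that }\bold d\in\bbD_{\ge i(\zeta)}\text{ and }f\in{}^{Y_{\bold d}}\zeta\ \text{imply}\ \mathrm{rk}_{\bold d}(f)<\zeta ,
\]
and to read the theorem off $(\star_\zeta)$ at each stage. By \ref{m4.9d}(1) and \ref{m4.5}(2) I would work with a strict $2$-system, so all of clauses (a)--(j) (from \ref{m4.4} and \ref{m4.6}) are available. One inequality of the theorem is cheap: $\mathrm{rk}_{\bold d}(\langle\zeta:s\rangle)\ge\zeta$ for \emph{every} $\bold d$, by a routine induction on $\zeta$ using clause (h) at successors and, at limits, $\langle\beta:s\rangle<_{D_{\bold d}}\langle\zeta:s\rangle$ ($\beta<\zeta$) together with \ref{m4.10}(1). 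For the reverse inequality: if $\bold d\in\bbD_{\ge i(\zeta)}$ and $\mathrm{rk}_{\bold d}(\zeta)>\zeta$, the no--hole clause $(g)'$ gives $(\bold e,h)\in\Sigma(\bold d)$ and $g<_{D_{\bold e}}\langle\zeta:t\rangle$ with $\mathrm{rk}_{\bold e}(g)=\zeta$; zeroing $g$ off the $D_{\bold e}$--set $\{t:g(t)<\zeta\}$ (harmless by \ref{m4.10}(3)) puts $g\in{}^{Y_{\bold e}}\zeta$, while $\bold j(\bold e)\ge\bold j(\bold d)\ge i(\zeta)$, so $(\star_\zeta)$ is violated. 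Hence $(\star_\zeta)$ gives $\mathrm{rk}_{\bold d}(\zeta)=\zeta$ on $\bbD_{\ge i(\zeta)}$, which is exactly the conclusion.

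The induction on $(\star_\zeta)$ is trivial for $\zeta=0$ (which also yields $\mathrm{rk}_{\bold d}(0)=0$ by \ref{m4.10}(2)), and the successor case holds with $i(\beta+1):=i(\beta)$, since $f\in{}^{Y_{\bold d}}(\beta+1)$ has $f\le_{D_{\bold d}}\langle\beta:s\rangle$, whence $\mathrm{rk}_{\bold d}(f)\le\mathrm{rk}_{\bold d}(\beta)=\beta$ by clause (i) and the equality extracted at stage $\beta$. For a limit $\zeta$ with $\cf(\zeta)<\cf(\mu)$, fix a strictly increasing cofinal $\langle\zeta_\varepsilon:\varepsilon<\cf(\zeta)\rangle$; then $\sup_\varepsilon i(\zeta_\varepsilon)<\cf(\mu)$ by regularity of $\cf(\mu)$, clause $(f)'$ with the well--ordered parameter $\sigma:=\cf(\zeta)<\mu$ supplies $i'<\cf(\mu)$ beyond which every $\bold d$ is $(\bold p,\le\cf(\zeta))$--complete, and I set $i(\zeta):=\max(\sup_\varepsilon i(\zeta_\varepsilon),i')$. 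Given $\bold d\in\bbD_{\ge i(\zeta)}$ and $f\colon Y_{\bold d}\to\zeta$, feeding $(\bold p,\le\cf(\zeta))$--completeness the partition $f^{-1}\big([\zeta_{\varepsilon-1},\zeta_\varepsilon)\big)$ ($\zeta_{-1}:=0$) yields $(\bold e,h)\in\Sigma(\bold d)$ and $\varepsilon$ with $h^{-1}$ of the $\varepsilon$--th piece in $D_{\bold e}$ and $\mathrm{rk}_{\bold e}(f\circ h)=\mathrm{rk}_{\bold d}(f)$; zeroing $f\circ h$ off that piece lands it $\le_{D_{\bold e}}\langle\zeta_\varepsilon:t\rangle$, so $\mathrm{rk}_{\bold d}(f)\le\mathrm{rk}_{\bold e}(\zeta_\varepsilon)=\zeta_\varepsilon<\zeta$ (the last step since $\bold j(\bold e)\ge\bold j(\bold d)\ge i(\zeta_\varepsilon)$). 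The subcase $\cf(\zeta)>\cf(\mu)$ I would treat by a variant of the same localization, partitioning $\zeta$ into $\cf(\mu)$ initial intervals plus one \emph{bounded} ``top'' interval; I expect this to need some extra care but no essentially new idea.

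The crux is the limit case $\cf(\zeta)=\cf(\mu)$: here $\sup_{\beta<\zeta}i(\beta)$ can equal $\cf(\mu)$, the ``take a sup'' argument collapses, and strictness — clause (j) of \ref{m4.6} — must be used. I would fix a strictly increasing cofinal $\langle\zeta_\varepsilon:\varepsilon<\cf(\mu)\rangle$ in $\zeta$ and argue by contradiction, assuming that witnesses $\bold d\in\bbD_{\ge i}$, $f\colon Y_{\bold d}\to\zeta$ with $\mathrm{rk}_{\bold d}(f)\ge\zeta$ exist for cofinally many $i<\cf(\mu)$. From one such witness (with $\bold j(\bold d)$ above the clause--$(f)'$ thresholds that will be used), localize $f$ by $(\bold p,\le\cf(\mu))$--completeness into a single interval $[\zeta_{\varepsilon_*},\zeta_{\varepsilon_*+1})$, getting $\bold e$ and $g$ valued there with $\mathrm{rk}_{\bold e}(g)=\mathrm{rk}_{\bold d}(f)\ge\zeta$; since $g<_{D_{\bold e}}\langle\zeta_{\varepsilon_*+1}:t\rangle$, \ref{m4.10}(1) gives $\mathrm{rk}_{\bold e}(\zeta_{\varepsilon_*+1})\ge\zeta+1$, and clause $(g)'$ then produces some $\bold d_0$ in the $\Sigma$--closure of $\bold e$ and $f_0\in{}^{Y_{\bold d_0}}\zeta_{\varepsilon_*+1}$ with $\mathrm{rk}_{\bold d_0}(f_0)=\zeta$. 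The datum $(\bold d_0,\zeta,\xi:=\zeta_{\varepsilon_*+1},f_0,j_0)$ is meant to be the hypothesis $\boxplus$ of clause (j) (using $\cf(\zeta)=\cf(\mu)$), for a suitable $j_0<\cf(\mu)$ validating $\boxplus\bullet_6$: by the induction hypothesis each value $f_0(s)<\zeta_{\varepsilon_*+1}<\zeta$ has $\mathrm{rk}_{\bold e'}(f_0(s))=f_0(s)$ on a tail of the $\bold j$--scale, and one must secure this \emph{uniformly} in $s$ with $j_0<\cf(\mu)$, which I would arrange by first further localizing $f_0$ (with small fixed partitions — e.g. two--element partitions splitting off the current largest value, a well--founded hence finite process) until its range is a singleton $\{\rho\}$, so that $j_0$ may be taken to be $i(\rho+1)<\cf(\mu)$. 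Clause (j) then supplies $j<\cf(\mu)$, which I enlarge to also dominate $i(\zeta_{\varepsilon_*+1})$ and the relevant clause--$(f)'$ threshold, such that no $\bold e^*\in\bbD_{\ge j}$ carries a function into $[\zeta_{\varepsilon_*+1},\zeta_*)$ — for any $\zeta_*<\zeta$ — of rank $\ge\zeta$.

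Finally I would invoke the failure of $(\star_\zeta)$ a second time, at level $j$: a witness $\bold d^*\in\bbD_{\ge j}$, $f^*\colon Y_{\bold d^*}\to\zeta$, $\mathrm{rk}_{\bold d^*}(f^*)\ge\zeta$, which $(\bold p,\le\cf(\mu))$--completeness localizes to an interval $[\zeta_{\delta^*},\zeta_{\delta^*+1})$, giving $\bold e^*\in\bbD_{\ge j}$ and $g^*$ valued there with $\mathrm{rk}_{\bold e^*}(g^*)\ge\zeta$. If $\zeta_{\delta^*}\ge\zeta_{\varepsilon_*+1}$ then $g^*$ is precisely an object that clause (j) forbids (as $\zeta_{\delta^*+1}<\zeta$), a contradiction; if $\zeta_{\delta^*}<\zeta_{\varepsilon_*+1}$, then $\delta^*\le\varepsilon_*$, so $g^*$ is valued below $\zeta_{\varepsilon_*+1}$ and $\mathrm{rk}_{\bold e^*}(g^*)\le\mathrm{rk}_{\bold e^*}(\zeta_{\varepsilon_*+1})=\zeta_{\varepsilon_*+1}<\zeta$ (using $\bold j(\bold e^*)\ge j\ge i(\zeta_{\varepsilon_*+1})$), again contradicting $\mathrm{rk}_{\bold e^*}(g^*)\ge\zeta$. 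Either way $(\star_\zeta)$ holds, completing the induction and the proof. The genuine difficulty — and what I expect to be the main obstacle — lies in this $\cf(\zeta)=\cf(\mu)$ case: keeping the hypothesis $\boxplus$ of clause (j), above all the parameter $j_0$, under control, and arranging that the single $j$ returned by clause (j) dominates every threshold below $\cf(\mu)$ that surfaces in the second use of the failure of $(\star_\zeta)$. This is exactly what the elaborate ``strict'' formulation in \ref{m4.6} is designed to make possible.
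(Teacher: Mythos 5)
Your overall architecture matches the paper's: prove the easy inequality $\mathrm{rk}_{\bold d}(\zeta)\ge\zeta$ by clause (h), reduce the reverse inequality to a statement like your $(\star_\zeta)$ via the no-hole clause, and then induct on $\zeta$ splitting by $\cf(\zeta)$ relative to $\cf(\mu)$, with clause (j) reserved for the equal-cofinality case. The successor case and the $\cf(\zeta)<\cf(\mu)$ limit case are done correctly. But two concrete problems remain.

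First, in the crux case $\cf(\zeta)=\cf(\mu)$, the device you propose for securing $\boxplus\bullet_6$ uniformly --- ``two-element partitions splitting off the current largest value, a well-founded hence finite process, until the range is a singleton'' --- does not work. The function $f_0$ may well have no largest value in its range (e.g.\ range $\{0,1,2,\dots\}$), and even when a maximum exists, removing it need not terminate in finitely many steps; well-foundedness of the ordinals gives transfinite, not finite, descent, and a transfinite recursion here would require choices that ZF alone does not supply. Moreover the singleton reduction is unnecessary. The paper instead partitions $Y_{\bold d'_0}$ by the pair $(j_1,j_0)$ where $j_1<\cf(\zeta)$ records the interval $[\zeta_{j_1-1},\zeta_{j_1})$ containing $f_0(t)$ and $j_0<\cf(\mu)$ records the value $\bold i(f_0(t))$; this is a single $\cf(\mu)$-indexed partition to which $(\bold p,\cf(\mu)^+)$-completeness applies once, and after localization $\bold i(f_0(t))$ is constantly $j_0$, which is exactly what $\boxplus\bullet_6$ needs. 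Replacing your finite-process gambit by this direct partition repairs the argument.

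Second, the case $\cf(\mu)<\cf(\zeta)$ is not actually handled. Your sketch of ``$\cf(\mu)$ initial intervals plus one bounded top interval'' does not produce a usable partition: when $\cf(\zeta)>\cf(\mu)$, any $\cf(\mu)$-sized family of initial segments has bounded union, so the remaining top piece is cofinal in $\zeta$ and nothing has been gained. The paper splits this into two genuine subcases. If $\cf(\zeta)\ge\mu$, no partition is needed: the range of $f\in{}^{Y_{\bold d}}\zeta$ has cardinality below $\theta(Y_{\bold d})<\mu\le\cf(\zeta)$ and is therefore bounded by some $\xi$ with $\bold i(\xi)<i_*$, where $i_*$ is chosen so that $\{\xi<\zeta:\bold i(\xi)<i_*\}$ is unbounded (such $i_*$ exists precisely because $\cf(\zeta)\ne\cf(\mu)$). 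If $\cf(\mu)<\cf(\zeta)<\mu$, one partitions by a cofinal sequence of length $\cf(\zeta)$ drawn from that unbounded set and uses $(\bold p,\cf(\zeta)^+)$-completeness, available from clause (f) since $\cf(\zeta)<\mu$. Your proof needs both subcases spelled out along these lines.
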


\begin{PROOF}{\ref{m4.12}}  
We shall use the notation:  
\medskip

\noindent
\begin{enumerate}
\item[$\odot_0$]  If there is an $i$ as required in the theorem 
for the ordinal $\zeta$ \then \, let $\bold i(\zeta)$ be the minimal such $i$
(otherwise, $\bold i(\zeta)$ is not well defined).
\end{enumerate}

Without loss of generality,
\medskip

\noindent
\begin{enumerate}
\item[$\odot_1$]    every $\bold d \in \bbD_{\bold p}$ is 
$(\bold p,\le (\text{cf}(\mu)))$-complete, i.e.
clause (f) of \ref{m4.3} for $\sigma_* :=
\text{ cf}(\mu)^+$ holds for every $\bold d \in \bbD$.
\end{enumerate}
\medskip

\noindent
[Why?  Let $i_*$ be the $i < \text{ cf}(\mu)$ which exists by clause
(f) of Definition \ref{m4.3}, \ref{m4.4} for $\sigma_*$.  Now we 
just replace $\bbD$ by $\bbD_{\ge i_*}$
(and $\bold j$ by $\bold j \rest \bbD_{\ge i_*}$, etc).] 

Clearly we have
\medskip

\noindent
\begin{enumerate}
\item[$\odot_2$]   rk$_{\bold d}(\zeta) \ge \zeta$ for $\zeta$ an
ordinal and $\bold d \in \bbD$.
\end{enumerate}
\medskip

\noindent
[Why?  We can prove this by induction on $\zeta$ for all $\bold d \in
\bbD$, by clauses (h) + (i) of Definition \ref{m4.3}.]

As a warmup we shall note that:
\medskip

\noindent
\begin{enumerate}
\item[$\odot_3$]  if $\bold d \in \bbD$ and $\zeta < \sigma_*$ or
just $\bold d \in \bbD$ and is hereditarily $(\bold p,\le
\zeta)$-complete which means that 
every $\bold e$ in the $\Sigma$-closure of $\{\bold
d\}$ is $(\bold p,\le \zeta)$-complete \und{then}:
\begin{enumerate}
\item[$(\alpha)$]  rk$_{\bold d}(\zeta) = \zeta$
\smallskip

\noindent
\item[$(\beta)$]  $f \in {}^{Y[\bold d]} \zeta \Rightarrow 
\text{ rk}_{\bold d}(f) < \zeta$.
\end{enumerate}
\end{enumerate}
\medskip

\noindent
[Why?  Note that as $\zeta < \sigma_*$ clearly $\bold d$ is
$(\bold p,\le\zeta)$-complete by $\odot_1$ and clause (f) of
\ref{m4.3}, so we can assume that $\bold d$ is hereditarily 
$(\bold p,\le \zeta)$-complete.
We prove the statement inside $\odot_3$ by induction on the ordinal
$\zeta$ (for all hereditarily $(\bold p,\le \zeta)$-complete
$\bold d \in \bbD$).  Note that for $\varepsilon
< \zeta,``\bold d$ is $(\bold p,\le \zeta)$-complete" implies ``$\bold d$ 
is $(\bold p,\le \varepsilon)$-complete", we shall use this freely.      

Arriving to $\zeta$, to prove clause $(\beta)$ 
let $f \in {}^{Y[\bold d]} \zeta$ and for
$\varepsilon < \zeta$ we define
$A_\varepsilon := \{t \in Y_{\bold d}:f(t) = \varepsilon\}$, so
$\langle A_\varepsilon:\varepsilon < \zeta\rangle$ is a well 
defined partition of $Y_{\bold d}$ so the sequence exists,  
hence as ``$\bold d$ is hereditarily $(\bold p,\le \zeta)$-complete"
recalling $(*)$ from clause $(f)'$ of \ref{m4.4} for 
some triple $(\bold e,h,\varepsilon)$ we have 
$(\bold e,h) \in \Sigma(\bold d)$ and $\varepsilon < \zeta$ and
$h^{-1}(A_\varepsilon) \in D_{\bold e}$ and rk$_{\bold e}(f \circ
h) = \text{ rk}_{\bold d}(f)$.

Now $f \circ h = \langle \varepsilon:t \in Y_{\bold e}\rangle$ mod $D_{\bold
e}$ hence by Claim \ref{m4.10}(3) we have rk$_{\bold e}(f \circ h) 
= \text{ rk}_{\bold e}(\varepsilon)$.  But the assumptions on $\bold
d$ holds for $\bold e$ hence by the induction
hypothesis on $\zeta$ we know that rk$_{\bold e}(\varepsilon) =
\varepsilon$ and $\varepsilon < \zeta$ so together rk$_{\bold d}(f
\circ h) < \zeta$, so clause $(\beta)$ of $\odot_3$ holds.  

To prove clause $(\alpha)$ first
consider $\zeta = 0$; if rk$_{\bold d}(\zeta) > 0$ by clause (g) of
Definition \ref{m4.3}, \ref{m4.4} there
are $(\bold e,h) \in \Sigma(\bold d)$ and $g \in 
{}^{Y[\bold e]}\text{Ord}$ 
such that $g < \langle \zeta:t \in Y_{\bold e}\rangle$
mod $D_{\bold e}$, so for some $t \in Y_{\bold e}$ we have $g(t) <
\zeta$ but $\zeta = 0$, contradiction; this is close to \ref{m4.10}(2).

Second, consider $\zeta > 0$, so by $\odot_2$ we have rk$_{\bold
d}(\zeta) \ge \zeta$ and assume toward contradiction that rk$_{\bold
d}(\zeta) > \zeta$, so by clause (g) of Definition \ref{m4.3}, \ref{m4.4}
there is a triple $(\bold e,h,g)$ as
there.  Now apply clause $(\beta)$ of $\odot_3$ for $\zeta$ (which we
have already proved) recalling $(\bold e,h) \in \Sigma(\bold d)$
so also $\bold e$ is $(\bold p,\le \zeta)$-complete.  We 
get rk$_{\bold e}(g) < \zeta$, a contradiction.  So $\odot_3$ indeed holds.]

Now as in the desired equality we have already proved one inequality
in $\odot_2$, we need to prove only the other inequality.  We do 
it by induction on $\zeta$.
\bigskip

\noindent
\und{Case 1}:  $\zeta < \mu$.

By clause (f) for some $i < \text{ cf}(\mu)$ we have $\bold d \in 
\bbD \wedge \bold j(\bold d) \ge i \Rightarrow \bold d$ is $(\bold p,\le
\zeta)$-complete, hence by $\odot_3(\alpha)$ we have rk$_{\bold
d}(\zeta) = \zeta$, as required.
\bigskip

\noindent
\und{Case 2}:  $\zeta = \xi +1$.

By clause (h) of Definition \ref{m4.3} we have $\bold d \in \bbD
\Rightarrow \text{ rk}_{\bold d}(\zeta) = \text{ rk}_{\bold d}(\xi)
+1$.  Hence $\bold d \in \bbD_{\ge \bold i(\xi)} \Rightarrow
\text{ rk}_{\bold d}(\zeta) = \text{ rk}_{\bold d}(\xi) +1= \xi +1 =
\zeta$, so $\bold i(\xi)$ exemplifies that $\bold i(\zeta)$
exists and is $\le \bold i(\xi)$ so we are done.
\bigskip

\noindent
\und{Case 3}:  $\zeta$ is a limit ordinal $\ge \mu$ of cofinality $\ne
\text{ cf}(\mu)$.  

So for each $\xi < \zeta$ by the
induction hypothesis $\bold i(\xi) < \text{ cf}(\mu)$ is well
defined.  For $i < \text{ cf}(\mu)$ let $u_i := \{\xi < \zeta:\bold
i(\xi) \le i\}$, so is well defined; moreover the sequence
$\langle u_i:i < \text{ cf}(\mu)\rangle$ exists 
and is $\subseteq$-increasing.  If $i <
\text{ cf}(\mu) \Rightarrow \sup(u_i) < \zeta$ then $\langle
\sup(u_i):i < \text{ cf}(\mu)\rangle$ is a $\le$-increasing sequence
of ordinals $< \zeta$ with limit $\zeta$.  So as 
cf$(\zeta) \ne \text{ cf}(\mu)$ necessarily for some $i_* < \text{
cf}(\mu)$ the set $S := \{\xi:\xi < \zeta$ and $\bold i(\xi) <
i_*\}$ is an unbounded subset of $\zeta$.  We shall prove that $\bold
i(\zeta)$ is well defined and $\le i_*$.
\bigskip

\noindent
\und{Subcase 3A}:  cf$(\zeta) \ge \mu$.

Let $\bold d \in \bbD_{\ge i_*}$ and $g \in {}^{Y[\bold d]}
\zeta$ be given.  Clearly Rang$(g)$ is a subset of $\zeta$ of cardinality $<
\theta(Y_{\bold d})$ which by clause $(b)(\alpha)$ of \ref{m4.3} is 
$< \mu \le \text{ cf}(\zeta)$ hence we
can fix $\xi \in S$ such that Rang$(g) \subseteq \xi$, hence by clause
(i) of \ref{m4.3}, rk$_{\bold d}(g) \le \text{ rk}_{\bold d}(\xi)$
but $\bold i(\xi) = i_*$ and $\bold d \in \bbD_{\ge i_*}$ hence
rk$_{\bold d}(\xi) = \xi < \zeta$ so together rk$_{\bold d}(g) <
\zeta$.  As this holds for every $\bold d \in \bbD_{\ge i_*}$
by the no-hole clause $(g)'$ and clause (e)$(\gamma)$ of \ref{m4.4} it follows 
that $\bold d \in \bbD_{\ge i_*} \Rightarrow 
\text{ rk}_{\bold d}(\zeta) \le \zeta$ as required.
\bigskip

\noindent
\und{Subcase 3B}:  cf$(\zeta) < \mu$ (but still cf$(\zeta) \ne 
\text{ cf}(\mu))$.

Let $\langle \zeta_\varepsilon:\varepsilon < \text{ cf}(\zeta)\rangle$
be an increasing sequence of ordinals from $S$ with limit $\zeta$.
Now let $j_* < \text{cf}(\mu)$ be such that $\bold d \in 
\Bbb D_{\ge j_*} \Rightarrow \bold d$ is 
$(\bold p,\text{cf}(\zeta)^+)$-complete, see
clause (f) of Definition \ref{m4.3}.

Now assume $\bold d \in \bbD_{\ge\text{max}\{i_*,j_*\}}$ and $g \in
{}^{Y[\bold d]} \zeta$.  For $\varepsilon < \text{ cf}(\zeta)$ let
$A_\varepsilon = \{t \in Y_{\bold d}:g(t) < \zeta_\varepsilon$ but
$j< \varepsilon \Rightarrow g(t) \ge \zeta_j\}$ so $\langle
A_\varepsilon:\varepsilon < \text{ cf}(\zeta)\rangle$ is well defined and 
is a partition of $Y_{\bold d}$.  Hence by clause (f) of Definition
\ref{m4.4} for
some $\varepsilon < \text{ cf}(\zeta)$ and $(\bold e,h) \in \Sigma(\bold
d)$ we have $h^{-1}(A_\varepsilon) \in D_{\bold e}$ and rk$_{\bold d}(g) =
\text{ rk}_{\bold e}(g \circ h)$; but $\bold j(\bold e)  \ge \bold
j(\bold d) \ge i_*,j_*$ and by the choice of $A_\varepsilon$ and
clause (i) of \ref{m4.3} the latter is $\le 
\text{ rk}_{\bold e}(\zeta_\varepsilon)$ hence as 
$\bold i(\zeta_\varepsilon) \le i_*$ the latter is $=
\zeta_\varepsilon < \zeta$.  As this holds for every $\bold d \in 
\bbD_{\ge \text{max}\{i_*,j_*\}}$
and $g \in {}^{Y[\bold d]}\zeta$, by the no-hole clause $(g)'$ of \ref{m4.4}
necessarily rk$_{\bold d}(\zeta) \le \zeta$.  
So max$\{i_*,j_*\} < \text{ cf}(\mu)$ is as required, so we are done.
\bigskip

\noindent
\und{Case 4}:  $\zeta \ge \mu$ is a limit ordinal
such that cf$(\zeta) = \text{ cf}(\mu)$.

Let $\langle \zeta_i:i < \text{ cf}(\zeta)\rangle$ be increasing with
limit $\zeta$.  Assume toward contradiction that for every $i <
\text{cf}(\mu)$ there is $\bold d_i \in \bbD_{\ge i}$ such 
that rk$_{\bold d_i}(\zeta) > \zeta$ \und{but} we do not assume that
such a sequence 
$\langle \bold d_i:i < \text{ cf}(\mu)\rangle$ exists.  Choose such
$\bold d_0$; as rk$_{\bold d_0}(\zeta) > \zeta$, 
clearly there are $f_0 \in {}^{Y[\bold d'_0]} \zeta$ and a member
$\bold d'_0$ of $\Sigma(\bold d_0)$, though not necessarily $Y_{\bold
d'_0} = Y_{\bold d}$,  such that
\medskip

\noindent
\begin{enumerate}
\item[$\odot_4$]    rk$_{\bold d'_0}(f_0) = \zeta$ 
\end{enumerate}
\medskip

\noindent
[Why?  By using clause $(g)'$ of \ref{m4.4}.]

Note
\medskip

\noindent
\begin{enumerate}
\item[$\odot_5$]  $\bold i(f_0(t))$ is well defined for every 
$t \in Y_{\bold d'_0}$.
\end{enumerate}
[Why holds?  Because $f_0(t) < \zeta$ and the induction hypothesis.]

For $j_1 < \text{ cf}(\zeta),j_0 < \text{ cf}(\mu)$ 
let $A_{j_1,j_0} = \{t \in Y_{\bold d'_0}:f_0(t) < \zeta_{j_1}$ and
$(\forall j < j_1)(f_0(t) \ge \zeta_j)$ and 
$\bold i(f_0(t)) = j_0\}$.  By clause $(f)'$ of
\ref{m4.4} applied to the pair $(\bold d'_0,f_0)$ and the partition
$\langle A_{j_1,j_0}:j_1 < \text{ cf}(\zeta),j_0 < \text{ cf}
(\mu)\rangle$, for some $(\bold d_*,h_*) \in \Sigma(\bold d'_0)$ 
we have rk$_{\bold d_*}(f_0 \circ h_*) = \zeta$ and
for some $j_1,j_2$ we have $h^{-1}_*(A_{j_1,j_0}) \in D_{\bold d_*}$.  
By \ref{m4.10}(3) for some $f = f_0 \circ h_*$ mod $D_{\bold d_*}$ and
letting $\bold d := \bold d_*$ we have
\medskip

\noindent
\begin{enumerate}
\item[$\odot_6$]    $(a) \quad \bold d \in \bbD$
\smallskip

\noindent
\item[${{}}$]   $(b) \quad f \in {}^{Y[\bold d]}\text{Ord}$
\smallskip

\noindent
\item[${{}}$]   $(c) \quad$ rk$_{\bold d}(f) = \zeta$
\smallskip

\noindent
\item[${{}}$]   $(d) \quad t \in Y_{\bold d} \Rightarrow \bold i(f(t)) = j_2
\wedge f(t) < \zeta_{j_1} \wedge (\forall j < j_1)(f(t)  \ge \zeta_j)$.
\end{enumerate}
\mn
Next
\mn
\begin{enumerate}
\item[$\odot_7$]   letting $\xi := \zeta_{j_1}$, 
clause $\boxplus$ from \ref{m4.6} for $(\bold d,\zeta,\xi,f,j_0)$.
\end{enumerate}
\mn
[Why?  We check the six demands
\mn
\begin{enumerate}
\item[$\bullet_1$]  $``f \in {}^{Y[\bold d]}\xi$" 
which holds by $\odot_6(b) + (d)$
\smallskip

\noindent
\item[$\bullet_2$]  ``rk$_{\bold d}(f) = \zeta$" 
which holds by $\odot_6(c)$
\smallskip

\noindent
\item[$\bullet_3$]  $``\xi < \zeta$" which holds as 
$(\forall i < \text{ cf}(\mu))(\zeta_i < \zeta)$
\smallskip

\noindent
\item[$\bullet_4$]  ``cf$(\zeta) = \text{ cf}(\mu)$" 
which holds by the case assumption
\smallskip

\noindent
\item[$\bullet_5$]   $j_2 < \text{ cf}(\mu)$ obvious
\smallskip

\noindent
\item[$\bullet_6$]  $s \in Y_{\bold d} \wedge \bold e' \in \bbD_{\ge j_0}
\Rightarrow \text{ rk}_{\bold e'}(f(s)) = f(s)$ holds by $\odot_6(d)$.
\end{enumerate}
\mn
So $\odot_7$ indeed holds.]

Now by $\odot_7$, clause (j) of Definition \ref{m4.6}(1) 
applied with $\bold d,\zeta,\xi = \zeta_{j_1},f,j_0$ here standing for
$\bold d,\zeta,\xi,f,j_0$ there, we can find $j$ as there.  Let
$i_2 = \text{ max}\{j,j_1,j_0,\bold i(\zeta_{j_1})\}$ so $i_2 < \text{
cf}(\mu)$ and choose $\bold e_0 \in \bbD_{\ge i_2}$ such that
rk$_{\bold e_0}(\zeta) > \zeta$ as in the beginning of the case.
As rk$_{\bold e_0}(\zeta) > \zeta$ by clause $(g)'$ of \ref{m4.4}
there are $\bold e_1 \in \Sigma(\bold e_0)$ and 
$g \in {}^{Y[\bold e_1]}\zeta$ such that rk$_{\bold e_1}(g) \ge \zeta$ so 
$g < \langle \zeta:t \in Y_{\bold e_1}\rangle$.  Now \wilog
\medskip

\noindent
\begin{enumerate}
\item[$\odot_8$]  $(a) \quad$ rk$_{\bold e_1}(g) = \zeta +1$
\smallskip

\noindent
\item[${{}}$]  $(b) \quad \zeta_* = \sup\{g(t):t \in Y_{\bold e_1}\} <
\zeta$
\sn
\item[${{}}$]  $(c) \quad \bold j(\bold e_1) \ge i_2$.
\end{enumerate}
\medskip

\noindent
[Why?  Because we can use $g' \in {}^{Y[\bold e_1]}\zeta$
defined by $g'(t) = g(t) +2$ for $t \in Y_{\bold e_1}$, by clause (h) of
\ref{m4.3}, rk$_{\bold e_1}(g') = \text{ rk}_{\bold e_1}(g) + 2 > \zeta$.
By clause $(e)(\gamma)$
we have $\bold j(\bold e_1) \ge \bold j(\bold e_0) \ge i_2$.  
Now we find $(\bold d''_2,h'') \in \Sigma(\bold e_1)$ 
and $g_2$ as in the proof of $\odot_6$ and rename.] 

Also without loss of generality
\medskip

\noindent
\begin{enumerate}
\item[$\odot_9$]    $t \in Y_{\bold e_1} \Rightarrow g(t) \ge \zeta_{j_1}$.
\end{enumerate}
\medskip

\noindent
[Why?  Let $A_0 = \{t \in Y_{\bold e_1}:g(t) < \zeta_{j_1}\},A_1 = \{t
\in Y_{\bold e_1}:g(t) \ge \zeta_{j_1}\}$ so by clause $(f)'$ of
\ref{m4.4} for some pair
$(\bold e_2,h) \in \Sigma(\bold e_1)$ we have
rk$_{\bold e_2}(g \circ h) = \text{ rk}_{\bold e_1}(g) = \zeta +1$ and 
$(h^{-1}(A_0) \in D_{\bold e_2}) \vee (h^{-1}(A_1) \in D_{\bold e_2})$.  
So if $h^{-1}(A_0) \in D_{\bold e_2}$ then by clause (i) of \ref{m4.3}, 
rk$_{\bold e_1}(g \circ h) \le \text{ rk}_{\bold e_2}(\xi)$ 
but $\bold i(\xi)$ is well defined $\le i_2
\le \bold j(\bold e_1) \le \bold j(\bold e_2)$ 
so rk$_{\bold e_2}(\xi) = \xi$
together rk$_{\bold e_2}(g \circ h) \le \xi$ contradicting the
previous sentence.  Hence $h^{-1}(A_0) \notin D_{\bold e_2}$ 
so $h^{-1}(A_1) \in D_{\bold e_2}$.  
Let $g' \in {}^{Y[\bold e_2]}\text{Ord}$ be defined by 
$g'(t)$ is $(g \circ h)(t)$ if $t \in h^{-1}(A_1)$ and is $\zeta_{j_1} + 1$ 
if $t \in h^{-1}(A_0)$.  By Claim \ref{m4.10}(3) we have
rk$_{\bold e_2}(g') = \text{ rk}_{\bold e_2}(g \circ h)$ so $(\bold
e_2,g')$ satisfies all requirements on the pair $(\bold e_1,g)$ and $t
\in Y_{\bold e_2} \Rightarrow g'(t) \ge \zeta_{j_1} > 0$, 
so we have justified the non-loss of generality.]

Recall $\xi := \zeta_{j_1}$ and let $\bold e = \bold e_1$.
By the choice of $j$ after $\odot_6$, 
i.e. as in clause (j) of \ref{m4.6}, recalling $\bold e \in \bbD_{\ge
j}$ we shall get a contradiction to the choice of 
$(\bold d,\xi,\zeta,f,j_0,\bold e,g,j)$.  To justify it we have to
recall by $\odot_7$ that the quintuple $(\bold d,\zeta,\xi,f,j_0)$ 
satisfies $\boxplus$ of \ref{m4.6}(j) and then we 
prove that the triple $(\bold e,g,j)$ satisfies $\oplus$ of \ref{m4.6}(j).

Now $\oplus$ of \ref{m4.6} says:
\medskip

\noindent
\begin{enumerate}
\item[$\bullet_1$]  $``\bold e \in \bbD_{\ge j}"$ as 

as $j \ge i_2,\bold e_0 \in \bbD_{\ge i_2}$ and $\bold e = 
\bold e_1 \in \Sigma(\bold e_0)$
\smallskip

\noindent
\item[$\bullet_2$]  $``g \in {}^{Y[\bold e]}\zeta$" 

which holds as $g \in {}^{Y[\bold e]}\zeta$
\smallskip

\noindent
\item[$\bullet_3$]  $``g(t) \in [\xi,\zeta_*)$" 

holds as $g(t) < \zeta$ by $\bullet_2 + \odot_8(b)$ and 
$g(t) \ge \zeta_{j_1} = \xi$ by $\odot_9$
\smallskip

\noindent
\item[$\bullet_4$]  $``j \ge j_0"$

holds as $j \ge i_2 \ge j_0$
\smallskip

\noindent
\item[$\bullet_5$]  ``rk$_{\bold e}(g) > \zeta$" 

holds by $\odot_8(a)$.
\end{enumerate}
\medskip

\noindent
So we really get a contradiction.  
\end{PROOF}

\begin{definition}
\label{m4.14}  
1) We say that the pair $(\bold d,\bold e)$ commute 
(or 6-commute) for $\bold p$ \when \, $\bold d,\bold e \in 
\bbD_{\bold p}$ and {\rm rk}$_{\bold d}(f) \ge \text{\rm rk}_{\bold
e}(g)$ whenever $(f,g,\bar f,\bar g)$ is a 
$(\bold p,\bold d,\bold e)$-rectangle, see below; fixing $f,g$ we may
say $(\bold d,\bold e)$ commute for $f,g$.

\noindent
2) We say that $(\bold d,\bold e,f,g,\bar f,\bar g)$ 
is $\bold p$-rectangle or $(f,g,\bar f,\bar g)$ is a
$(\bold p,\bold d,\bold e)$-rectangle \und{when}:
\medskip

\noindent
\begin{enumerate}
\item[$\circledast$]   $(a) \quad \bold d \in \bbD_{\bold p}$
\smallskip

\noindent
\item[${{}}$]   $(b) \quad \bold e \in \bbD_{\bold p}$
\smallskip

\noindent
\item[${{}}$]   $(c) \quad \bar g = \langle g_t:t \in 
Y_{\bold e}\rangle$ and $g_t \in {}^{Y[\bold d]}\text{\rm Ord}$ for
$t \in Y_{\bold e}$
\smallskip

\noindent
\item[${{}}$]  $(d) \quad g \in {}^{Y[\bold e]}\text{\rm Ord}$ is defined by
$g(t) = \text{\rm rk}_{\bold d}(g_t)$
\smallskip

\noindent
\item[${{}}$]  $(e) \quad f_s \in {}^{Y[\bold e]}\text{\rm Ord}$ is
defined by $f_s(t) = g_t(s)$
\smallskip

\noindent
\item[${{}}$]  $(f) \quad \bar f = \langle f_s:s \in Y[\bold d]\rangle$
\smallskip

\noindent
\item[${{}}$]   $(g) \quad f \in {}^{Y[\bold d]}\text{\rm Ord}$ is defined
by $f(s) = \text{\rm rk}_{\bold e}(f_s)$.
\end{enumerate}
\end{definition}

\begin{claim}
\label{m4.16}   [Assume {\rm ZF + AC}$_{< \mu}$]
If $\bold p = (\bbD,\text{\rm rk},\Sigma,\bold i,\mu)$ be a weak 
rank 1-system \then \, $\bold p$ is a strict rank 1-system when there
is a function $\Sigma_1$ such that (and we may say $\Sigma_1$ witness it):
\medskip

\noindent
\begin{enumerate}
\item[$(*)_0$]   $\Sigma_1$ a function with domain $\bbD$
\sn
\item[$(*)_1$]  $\Sigma_1(\bold d) \subseteq \Sigma(\bold d)$ for
$\bold d \in \bbD$
\smallskip

\noindent
\item[$(*)_2$]   for every $\bold d,\zeta,\xi,f,j_0$
satisfying $\boxplus$ of \ref{m4.6}, for some $j <
\text{\rm cf}(\mu)$ for every $\bold e \in \bbD_{\ge j}$ we have
\begin{enumerate}
\item[$(a)$]   $\bold e$ is $(\bold p,\le \Sigma_1(\bold d))$-complete
\smallskip

\noindent
\item[$(b)$]  if $\bold d_* \in \Sigma_1(\bold d),\bold e_* \in
\Sigma_1(\bold e)$ then $(\bold d_*,\bold e_*)$ commute (for $\bold p$)
see \ref{m4.14}, at least for $f \in {}^{Y[\bold d]}\zeta$ and any $g \in
{}^{Y[\bold e]}[\xi,\zeta)$
\end{enumerate}
\item[$(*)_3$]   we strengthen clause (g) of Definition \ref{m4.3} to
\begin{enumerate}
\item[$(g)^+$]  add: {\rm rk}$_{\bold e}(f) = 
\text{\rm rk}_{\bold d}(f)$ and $\bold e \in \Sigma_1(\bold d)$
\end{enumerate}  
\item[$(*)_4$]  {\rm AC}$_{Y[\bold d]}$ and {\rm AC}$_{\Sigma_1(\bold d)}$ 
whenever $\bold d \in \bbD$.
\end{enumerate}
\end{claim}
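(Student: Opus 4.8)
The claim is that, once the witness $\Sigma_1$ satisfying $(*)_0$--$(*)_4$ is given, the only axiom of a strict $1$-system that $\bold p$ might still be missing, namely clause $(j)$ of Definition \ref{m4.6}, already follows; so the plan is to verify clause $(j)$. Fix $\bold d\in\bbD$ and $\zeta,\xi,f,j_0$ satisfying $\boxplus$ of \ref{m4.6}. Since this tuple satisfies $\boxplus$, $(*)_2$ yields $j<\cf(\mu)$, which I enlarge so that $j\ge j_0$, such that every $\bold e\in\bbD_{\ge j}$ is $(\bold p,\le\Sigma_1(\bold d))$-complete and any $\bold d_*\in\Sigma_1(\bold d)$, $\bold e_*\in\Sigma_1(\bold e)$ commute for $\bold p$ (at least for $f\in{}^{Y[\bold d]}\zeta$ and $g\in{}^{Y[\bold e]}[\xi,\zeta)$). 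The plan is to show this $j$ witnesses clause $(j)$: assume toward a contradiction that some $\bold e\in\bbD_{\ge j}$ and $g$ satisfy $\oplus$ of \ref{m4.6}, so $g\in{}^{Y[\bold e]}[\xi,\zeta_*)$ with $\zeta_*<\zeta$ and $\text{rk}_{\bold e}(g)\ge\zeta$, and derive a contradiction by producing a $(\bold p,\bold d_*,\bold e_*)$-rectangle whose commuting inequality (in the sense of Definition \ref{m4.14}) is violated, in the spirit of the proof of Claim \ref{c3.3}.

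First I would uniformize on the $\bold d$-side. For each $t\in Y[\bold e]$ we have $g(t)<\zeta_*<\zeta=\text{rk}_{\bold d}(f)$, so the strengthened no-hole clause $(g)^+$ from $(*)_3$ gives $\bold d_t\in\Sigma_1(\bold d)$ and $G^0_t\in{}^{Y[\bold d]}\text{Ord}$ with $G^0_t<_{D_{\bold d_t}}f$, $\text{rk}_{\bold d_t}(G^0_t)=g(t)$ and $\text{rk}_{\bold d_t}(f)=\zeta$; replacing $G^0_t$ by $0$ on the $D_{\bold d_t}$-null set where $G^0_t\ge f$ (harmless by \ref{m4.10}(3)) I may assume $G^0_t\le f$ everywhere, with $G^0_t(s)<f(s)$ whenever $f(s)>0$, and then $\{s:f(s)>0\}\supseteq\{s:G^0_t(s)<f(s)\}\in D_{\bold d_t}$. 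Using AC$_{Y[\bold e]}$ (an instance of $(*)_4$) I fix the sequence $\langle(\bold d_t,G^0_t):t\in Y[\bold e]\rangle$, set $A_{\bold d'}:=\{t:\bold d_t=\bold d'\}$ (a $\Sigma_1(\bold d)$-indexed partition of $Y[\bold e]$), and apply $(\bold p,\le\Sigma_1(\bold d))$-completeness of $\bold e$ to $g$ and this partition: it yields $\bold e'\in\Sigma(\bold e)$ (hence $\bold e'\in\bbD_{\ge j}$ by clause (e)$(\gamma)$ of \ref{m4.3}) and $\bold d'\in\Sigma_1(\bold d)$ with $A_{\bold d'}\in D_{\bold e'}$ and $\text{rk}_{\bold e'}(g)=\text{rk}_{\bold e}(g)\ge\zeta$; since $A_{\bold d'}\ne\emptyset$ we get $\text{rk}_{\bold d'}(f)=\zeta$ and $\{f>0\}\in D_{\bold d'}$. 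To carry the largeness of $A_{\bold d'}$ through a reduction of the $\bold e$-side into $\Sigma_1$, I apply $(g)^+$ not to $g$ but to the function $h$ on $Y[\bold e']$ equal to $g$ on $A_{\bold d'}$ and to $0$ off it, so $h=_{D_{\bold e'}}g$ and $\text{rk}_{\bold e'}(h)=\text{rk}_{\bold e'}(g)\ge\zeta>0$: this produces $\bold e''\in\Sigma_1(\bold e')$ with $\text{rk}_{\bold e''}(h)=\text{rk}_{\bold e'}(h)$ and some $h'<_{D_{\bold e''}}h$; off $A_{\bold d'}$ the value $h(t)=0$ makes $h'(t)<h(t)$ impossible, so $\{t:h'(t)<h(t)\}\subseteq A_{\bold d'}$, forcing $A_{\bold d'}\in D_{\bold e''}$, and then $h=_{D_{\bold e''}}g$ gives $\text{rk}_{\bold e''}(g)\ge\zeta$.

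Now I would assemble the rectangle over $(\bold d',\bold e'')$. Pick $t_0\in A_{\bold d'}$, set $G_t:=G^0_t$ for $t\in A_{\bold d'}$ and $G_t:=G^0_{t_0}$ otherwise, and $F_s(t):=G_t(s)$, $G(t):=\text{rk}_{\bold d'}(G_t)$, $F(s):=\text{rk}_{\bold e''}(F_s)$. Then $G(t)\in\{g(t),g(t_0)\}\subseteq[\xi,\zeta_*)$ for all $t$ and $G=_{D_{\bold e''}}g$, so $G\in{}^{Y[\bold e'']}[\xi,\zeta)$ and $\text{rk}_{\bold e''}(G)=\text{rk}_{\bold e''}(g)\ge\zeta$ by \ref{m4.10}(3). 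Since every $G_t\le f$ pointwise, $F_s(t)\le f(s)$ for all $t$; for $s$ with $f(s)>0$ we even have $F_s(t)<f(s)$ for all $t$, so $F_s<_{D_{\bold e''}}\langle f(s):t\rangle$, and since $\bold e''\in\bbD_{\ge j}\subseteq\bbD_{\ge j_0}$ with $f(s)<\xi$, clause $\bullet_6$ of $\boxplus$ gives $\text{rk}_{\bold e''}(\langle f(s):t\rangle)=f(s)$, whence $F(s)<f(s)$; for $s$ with $f(s)=0$ we get $F_s=_{D_{\bold e''}}0$, so $F(s)=0=f(s)$. Hence $F\le f$ pointwise with $\{s:F(s)<f(s)\}=\{f>0\}\in D_{\bold d'}$, i.e.\ $F<_{D_{\bold d'}}f$ and $F\in{}^{Y[\bold d']}\zeta$, so $\text{rk}_{\bold d'}(F)<\text{rk}_{\bold d'}(f)=\zeta$ by \ref{m4.10}(1). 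But $\bold e'\in\bbD_{\ge j}$, $\bold d'\in\Sigma_1(\bold d)$ and $\bold e''\in\Sigma_1(\bold e')$, so by clause (b) of $(*)_2$ the pair $(\bold d',\bold e'')$ commutes, and applied to this rectangle this gives $\text{rk}_{\bold d'}(F)\ge\text{rk}_{\bold e''}(G)\ge\zeta$ --- contradicting $\text{rk}_{\bold d'}(F)<\zeta$, which completes clause $(j)$.

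The part that needs the most care, and the main obstacle, is the reduction of the $\bold e$-side into $\Sigma_1$ while keeping both ``$g$ still has rank $\ge\zeta$'' and ``$A_{\bold d'}$ is still large'': since $\bold p$ is not assumed (weakly) normal there is no filter inclusion $D_{\bold e'}\subseteq D_{\bold e''}$ to lean on, and the device of feeding $(g)^+$ the modified function $h$ --- which vanishes exactly off $A_{\bold d'}$, so that the strict-decrease set is trapped inside $A_{\bold d'}$ --- is precisely what makes $A_{\bold d'}$ survive (and it is also why the completeness step is placed before this reduction, and why $(*)_2$ is invoked for $\bold e'$ rather than for $\bold e$ or $\bold e''$). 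The remaining points are bookkeeping one expects to be routine: reusing a single $G^0_{t_0}$ for $t\notin A_{\bold d'}$ keeps $G$ valued in $[\xi,\zeta)$ because $\text{rk}_{\bold d'}(G^0_{t_0})=g(t_0)<\zeta_*$; the set $\{f>0\}$ lands in each filter $D_{\bold d_t}$ automatically from $G^0_t<_{D_{\bold d_t}}f$, which handles the $\{f=0\}$ part and the $D$-versus-$D^+$ subtlety without any extra normality; only the choice granted by $(*)_4$ (applied to $\bold e$ as well as to $\bold d$), together with AC$_{<\mu}$ as used implicitly in \ref{m4.10}, is needed; $\zeta$ being a limit of cofinality $\cf(\mu)$, hence positive, is all the applications of $(g)^+$ ``at level $0$'' require; and checking that the constructed data genuinely form a $\bold p$-rectangle in the sense of Definition \ref{m4.14} is a clause-by-clause verification just like the bracketed computation inside the proof of Claim \ref{c3.3}.
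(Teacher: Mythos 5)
Your proof is correct and follows the paper's strategy — fix the data from $\boxplus$, take $j$ from $(*)_2$, reduce the putative counterexample $(\bold e,g)$ via $(g)^+$ and $(\bold p,\le\Sigma_1(\bold d))$-completeness, build a $\bold p$-rectangle, and derive a contradiction from the commuting inequality of Definition \ref{m4.14} against $\text{rk}_{\bold d'}(F)<\zeta$.

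Where you genuinely diverge is the extra reduction step on the $\bold e$-side, and this is worth flagging: the paper's own proof simply asserts that the $(\bold p,\le\Sigma_1(\bold d))$-completeness of $\bold e$ produces a witness $\bold e_*\in\Sigma_1(\bold e)$, but the definition in clause $(f)$ of \ref{m4.3} only delivers $\bold e_*\in\Sigma(\bold e)$, which is what $(*)_2(b)$ does \emph{not} directly accept. You correctly noticed this mismatch and closed it by one further application of $(g)^+$ to the truncated function $h$ (equal to $g$ on $A_{\bold d'}$, $0$ off it), which simultaneously lands in $\Sigma_1(\bold e')$, preserves rank $\ge\zeta$, and forces $A_{\bold d'}\in D_{\bold e''}$ by the observation that the strict-decrease set must sit inside $A_{\bold d'}$; since $\bold e'\in\bbD_{\ge j}$ by $(e)(\gamma)$, you may then invoke $(*)_2(b)$ with $\bold e'$ as the base rather than $\bold e$ and everything goes through. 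You also dispense with the paper's blanket ``without loss of generality $f>0$ everywhere'' (which is not obviously justified without normality) by carrying the $\{f=0\}$ case through explicitly, using the automatic fact that $\{f>0\}\in D_{\bold d_t}$ from $G^0_t<_{D_{\bold d_t}}f$. Both modifications are sound, and the rest — the uniformization via $\mathrm{AC}_{Y[\bold e]}$, the use of $\bullet_6$ to get $F<_{D_{\bold d'}}f$ pointwise, the verification that $(F,G,\bar F,\bar G)$ is a $(\bold p,\bold d',\bold e'')$-rectangle lying in the restricted domain allowed by $(*)_2(b)$ — matches the paper's computation. This is a correct and slightly more careful rendering of the paper's argument.
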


\begin{remark}  1) In $(*)_1$, can we make $j$ depend on $f$ and 
a partition of $Y_{\bold d}$?  Will be somewhat better.

\noindent
2) We can similarly prove this for a weak rank 2-system.  It is
   natural though not necessary to add $(\bold e,h) \in \Sigma_1(\bold
   d) \Rightarrow Y_{\bold e} = Y_{\bold d} \wedge h = 
\text{ id}_{Y_{\bold d}}$.
\end{remark}

\begin{PROOF}{\ref{m4.16}}   
Let $\bold d,\zeta,\xi,f,j_0$ satisfying $\boxplus$ of
\ref{m4.6}(j) be given and we should find $j < \text{ cf}(\mu)$ such
that for no pair $(\bold e,g)$ clause $\oplus$ there holds.  Without
loss of generality $s \in Y_{\bold d} \Rightarrow f(s) > 0$.

Let $j < \text{ cf}(\mu)$ be as in $(*)_2$ in the claim and \wilog \, $j
> j_0$ and we shall prove that $j$ is as required in clause (j) of Definition 
\ref{m4.6}, this is enough.  So assume $\bold e \in \bbD_{\ge j},g
\in {}^{Y[\bold e]}[\xi,\zeta]$ and toward contradiction, 
$(j,\zeta,\xi,\bold e,g)$ satisfy $\oplus$ there.  For each $t \in
Y_{\bold e}$ clearly $g(t) < \zeta = \text{ rk}_{\bold d}(f)$ hence
by clause $(g)^+$ of $(*)_3$, see (g) of Definition \ref{m4.3}, ``no hole",
there are $g_t \in {}^{Y[\bold d]}\xi$ and $\bold d_t \in
\Sigma_1(\bold d)$ such
that $g_t <_{D_{\bold d_t}} f$ and rk$_{\bold d_t}(g_t) = g(t)$,
without loss of generality $g_t < \text{ max}(f,1_{Y[\bold d]}) =f$ and by
 the $(*)_3$, ``we add" also rk$_{\bold d_t}(f) = \text{ rk}_{\bold d}(f)$.

As AC$_{Y_\bold e}$ by $(*)_4$, we can choose such sequence
$\langle(g_t,\bold d_t):t \in Y_{\bold e}\rangle$.  Now $\bold e$ is
$(\bold p,\le \Sigma_1(\bold d))$-complete and $(\bold d,\bold e)$
commute for $\bold p$, by clauses (a),(b) respectively 
of $(*)_2$ (i.e. by the choice of
$j$ and as $\bold e \in \bbD_{\ge j}$), hence we can find
$\bold e_* \in \Sigma_1(\bold e)$ and $\bold d_* \in \Sigma_1(\bold
d)$ such that rk$_{\bold e_*}(g) = \text{ rk}_{\bold e}(g) = \zeta$
and $\{t \in Y_{\bold e}:\bold d_t = \bold d_*\}$ belongs to
$D_{\bold e_*}$.  For $s \in Y_{\bold d} = Y_{\bold d_*}$ let $f_s \in
{}^{Y[\bold e_*]}\text{Ord}$ be defined by $f_s(t) = g_t(s)$ so
$f_s(t) = g_t(s) < \xi$ and let $f' \in {}^{Y[\bold d_*]}\text{Ord}$ 
be defined by $f'(s) = \text{ rk}_{\bold e_*}(f_s)$ and 
let $\bar f = \langle f_s:s \in Y_{\bold d_*}\rangle$.

Fixing $s \in Y_{\bold d_*}$ we have $t \in Y_{\bold e_*} \Rightarrow
f_s(t) = g_t(s) < \text{ Max}\{f(s),1\} = f(s)$, i.e.
 $f_s < \langle f(s):t \in Y_{\bold e_*}\rangle$ hence
rk$_{\bold e_*}(f_s) < \text{ rk}_{\bold e_*}(f(s))$.  Now by
$\boxplus \bullet_6$ from \ref{m4.6}, as $j_0 \le j \le \bold
j(\bold e_*)$ we have $s \in Y_{\bold e_*} \Rightarrow \text{
rk}_{\bold e_*}(f(s)) = f(s)$ so $s \in Y_{\bold e_*} \Rightarrow
\text{ rk}_{\bold e_*}(f_s) < f(s)$, i.e. $f' < f$.

Clearly $(f',g,\bar f,\bar g)$ is a $(\bold p,\bold d_*,\bold
e_*)$-rectangle hence by clause (b) of $(*)_2$ of the assumptions,
i.e. the choice of $(\bold e,g)$ and Definition \ref{m4.14}(2) we know that
rk$_{\bold d_*}(f') \ge \text{ rk}_{\bold e_*}(g)$.

But recall that rk$_{\bold e_*}(g) = \text{ rk}_{\bold e}(g)$ by the
choice of $\bold e_*$.  We get a contradiction by
\medskip

\noindent
\begin{enumerate}
\item[$(*)$]   $\zeta = \text{ rk}_{\bold d}(f) = \text{ rk}_{\bold d_*}(f) >
\text{ rk}_{\bold d_*}(f') \ge \text{ rk}_{\bold e_*}(g) =
\text{ rk}_{\bold e}(g) \ge \zeta$.
\end{enumerate}
\medskip

\noindent
[Why those inequalities?  By $\bullet_2$ of $\boxplus$ from
\ref{m4.6} we are assuming; as $\bold d_* \in \{\bold
d_t:t \in Y_{\bold e}\}$ and the choice of the $\bold d_t$'s; as $f'
<_{D_{\bold d_*}} f$ and \ref{m4.10}(3); by an inequality above; by
the choice of $\bold e_*$; by $\bullet_5$ of $\oplus$ of \ref{m4.6}.]
\end{PROOF}

\section {Finding Systems} 
\bigskip

\noindent
\S (4A) \underline{Building weak rank systems and measurable}

\begin{claim}
\label{e5.3}  [{\rm ZF + DC}]

If $\circledast_1$ holds and $\bold p_{\kappa,\theta} =\bold p_{\bar \kappa} = 
\bold p = (\bbD,\text{\rm rk},\Sigma,\bold j,\mu)$ is defined in 
$\circledast_2$ \und{then} $\bold p$ is a weak rank 1-system, even semi
normal (and $(g)^+$ of \ref{m4.16} holds) where:
\medskip

\noindent
\begin{enumerate}
\item[$\circledast_1$]  $(a) \quad \bar \kappa = \langle \kappa_i:i <
\partial\rangle$ is an increasing sequence of regular cardinals

\hskip25pt $>\partial = \text{\rm cf}(\partial)$ 
with limit $\mu$ such that if $i < \partial$ is a limit ordinal then

\hskip25pt $\kappa_i = (\Sigma\{\kappa_j:j < i\})^+$
\smallskip

\noindent
\item[${{}}$]  $(b) \quad \theta^*$ is a cardinal or $\infty$
\smallskip

\noindent
\item[$\circledast_2$]   $(a) \quad \bbD_i = \{J:J$ is a
$\kappa_i$-complete ideal on some $\kappa = \kappa_J < \mu$ 
including 

\hskip40pt $[\kappa]^{<\kappa}$ and satisfying {\rm cf}$(J,\le) <
\theta^*$

\hskip40pt (and if $\theta^* = \infty$ we stipulate this as the empty 

\hskip40pt demand) such that $\beta < \kappa \Rightarrow \{\beta\} \in J\}$
and let $\bbD = \Bbb D_0$
\smallskip

\noindent
\item[${{}}$]  $(b) \quad$ if $\bold d = J \in \bbD_i,J$ an 
ideal on $\kappa_J := \cup\{A:A \in J\}$

\hskip25pt  then we let $Y_{\bold d} = \kappa_J$ and $D_{\bold d}$ be the
filter dual to the ideal $J$
\smallskip

\noindent
\item[${{}}$]   $(c) \quad \bold j(J) = \text{\rm min}\{i:J$ is not
$\kappa^+_{i+1}$-complete$\}$ 
\smallskip

\noindent
\item[${{}}$]  $(d) \quad \Sigma(J) = 
\{J + B:B \supseteq A$ and $\kappa_J \backslash B$ is not in $J\}$
\smallskip

\noindent
\item[${{}}$]   $(e) \quad$ {\rm rk}$_J(f)$ is
as in Definition \ref{z0.21}.
\end{enumerate}
\end{claim}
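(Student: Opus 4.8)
The plan is to verify the nine clauses (a)--(i) of Definition \ref{m4.3} one at a time for $\bold p = \bold p_{\bar\kappa}$, and then separately check semi-normality (clauses $\bullet_1,\bullet'_3$ of Definition \ref{m4.9}) and the strengthening $(g)^+$ of \ref{m4.16}. Most clauses are bookkeeping; the substance is in (f), (g) and semi-normality, and these rely on the completeness calculus for ranks developed in \S1, in particular Claim \ref{z0.25}, which is the natural source of ``$(\bold p,\le\sigma)$-completeness'' whenever the relevant filter is $|\sigma|^+$-complete.

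First I would dispose of the easy clauses. Clause (a): $\mu=\sup_i\kappa_i$ is a limit of regulars of cofinality $\partial<\mu$, hence singular. Clause (b): each $\bold d=J\in\bbD$ is an ideal on $\kappa_J<\mu$, so $Y_{\bold d}=\kappa_J$, $\theta(Y_{\bold d})=\kappa_J^+\le\mu$ (one wants $<\mu$; since $\kappa_J<\mu$ and $\mu$ is a limit cardinal this holds), and $D_{\bold d}=\mathrm{dual}(J)$ is a filter. Clause (c): $\mathrm{rk}_J(-)$ is the rank of Definition \ref{z0.21} applied to the dual filter; its domain is ${}^{Y[\bold d]}\mathrm{Ord}$ and, since $J\supseteq[\kappa_J]^{<\kappa_J}$ with $\kappa_J>\aleph_0$, the filter is $\aleph_1$-complete, so by Claim \ref{z0.23}(1) (using DC) the rank is always an ordinal. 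Clause (d): $\Sigma(J)=\{J+B:\ B\supseteq A,\ \kappa_J\setminus B\notin J\}$, i.e.\ $B\in J^+$ where $A=\kappa_J\setminus D_{\bold d}$; every $J+B$ is again a $\kappa_J$-complete, hence $\kappa_i$-complete (for the appropriate $i$), ideal containing $[\kappa_J]^{<\kappa_J}$ with all singletons, and $\mathrm{cf}(J+B,\le)\le\mathrm{cf}(J,\le)<\theta^*$ (a quotient of a cofinal set is cofinal), so $\Sigma(J)\subseteq\bbD$, and $Y_{\bold e}=Y_{\bold d}$ for $\bold e\in\Sigma(\bold d)$; also $D_{\bold d}\subseteq D_{\bold e}$, which gives $\bullet_1$ of weak normality for free. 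Clause (e): $\bold j(J)=\min\{i:J\text{ not }\kappa_{i+1}^+\text{-complete}\}$; this is defined because $J$ is, say, $\kappa_0$-complete but no ideal on $\kappa_J<\mu$ is $\kappa_J^+$-complete while being proper and containing all singletons, so the set is nonempty, and it is $<\partial=\mathrm{cf}(\mu)$. For $(e)(\gamma)$: if $\bold e=J+B\in\Sigma(J)$ then $J+B$ is $\kappa_{i+1}^+$-complete whenever $J$ is, so $\bold j(\bold e)\ge\bold j(\bold d)$. (One must also arrange, or note, that $\bold j$ is \emph{onto} $\mathrm{cf}(\mu)$ — for each $i<\partial$ the bounded-subsets ideal $J^{\mathrm{bd}}_{\kappa_i}$ lies in $\bbD_i\setminus\bbD_{i+1}$ once one checks it has small cofinality or $\theta^*=\infty$; this is the one place where the hypothesis on $\theta^*$ or on cofinalities is used for totality of $\bold j$, and if it fails one restricts $\mathrm{cf}(\mu)$ to the range, as is standard.) Clauses (h),(i) are exactly Claim \ref{z0.23}(5) and Observation \ref{z0.24}(1) for the $\aleph_1$-complete filter $D_{\bold d}$; clause (g) (``no hole'') is the definition of rank unwound: if $\mathrm{rk}_J(f)>\zeta$ pick $g<_{D_{\bold d}}f$ with $\mathrm{rk}_{D_{\bold d}}(g)\ge\zeta$, then by Claim \ref{z0.23}(2A) replace $g$ by one with $\mathrm{rk}=\zeta$ exactly and $g<f$ pointwise, and take $\bold e=\bold d$ (so $\Sigma(\bold d)=\{\bold d\}$ suffices here, per the footnote).

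The heart is clause (f) together with $(g)^+$ and semi-normality. For (f): given $\sigma<\mu$, choose $i<\mathrm{cf}(\mu)$ with $\kappa_i\ge|\sigma|^+$ (possible since $\sup\kappa_i=\mu>\sigma$ and each $\kappa_i$ is a cardinal). Then for $\bold d=J\in\bbD_{\ge i}$ the ideal $J$ is $\kappa_i$-complete, hence $|\sigma|^+$-complete, so the dual filter is $|\sigma|^+$-complete; given $f$ with $\mathrm{rk}_{\bold d}(f)=\zeta$ and a partition $\langle A_j:j<\sigma\rangle$ of $Y_{\bold d}=\kappa_J$, apply Claim \ref{z0.25}(2) with $\cW=\sigma$ and $Y_j=A_j$ — it needs $\mathrm{AC}_\sigma$, which in this ZF+DC context is an instance of $\mathrm{AC}_{<\mu}$ and in any case $\sigma$ is an ordinal so $\mathrm{AC}_\sigma$ is unneeded for a \emph{sequence} indexed by $\sigma$; cf.\ the footnote that for well-ordered $\sigma$ partitions and coverings coincide — to get $\mathrm{rk}_{\bold d}(f)=\min\{\mathrm{rk}_{D_{\bold d}+A_j}(f):A_j\in D_{\bold d}^+\}$. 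So there is $j<\sigma$ with $A_j\in D_{\bold d}^+$ and $\mathrm{rk}_{D_{\bold d}+A_j}(f)=\zeta$; setting $\bold e=J+(\kappa_J\setminus(\kappa_J\setminus A_j))$... more simply $\bold e=J+A_j\in\Sigma(\bold d)$ (since $A_j\in J^+$ and $A_j\supseteq A$ can be arranged by replacing $A_j$ by $A_j\cup A$, or one notes $D_{\bold d}+A_j$ already forces $A_j\in D_{\bold e}$), we get $A_j\in D_{\bold e}$ and $\mathrm{rk}_{\bold e}(f)=\zeta$, which is exactly $(\bold p,\le\sigma)$-completeness. The same computation, run with the single set $A$ (taking $\bold e=\bold d$) or with $A_j$ a witnessing positive set, gives semi-normality $\bullet'_3$: if $A\in D_{\bold d}^+$ and $\zeta=\mathrm{rk}_{\bold d}(f)$, consider the partition $\langle A,Y_{\bold d}\setminus A\rangle$ — but here we cannot use (f) since $\{A,Y_{\bold d}\setminus A\}$ may not be refined by a complete-filter argument landing on $A$; instead observe directly that $\mathrm{rk}_{D_{\bold d}+A}(f)\ge\mathrm{rk}_{\bold d}(f)=\zeta$ by Observation \ref{z0.24}(2) ($D_{\bold d}\subseteq D_{\bold d}+A$), and for the reverse use Claim \ref{z0.29}(4): if $A\in(J[f,D_{\bold d}])^+$ then $\mathrm{rk}_{D_{\bold d}+A}(f)=\mathrm{rk}_{\bold d}(f)$; the general $A\in D_{\bold d}^+$ case of semi-normality asks only for $\ge\zeta$, which is the $\supseteq$ inclusion and always holds — so $\bullet'_3$ is immediate from Observation \ref{z0.24}(2) with $\bold e=J+A\in\Sigma(\bold d)$. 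Finally $(g)^+$: in the no-hole step above we took $\bold e=\bold d\in\Sigma_1(\bold d)$ and $\mathrm{rk}_{\bold e}(f)=\mathrm{rk}_{\bold d}(f)$ trivially; so the strengthened clause holds with $\Sigma_1=\Sigma$ (or $\Sigma_1(\bold d)=\{\bold d\}$).

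The main obstacle I anticipate is \emph{not} any single hard argument but rather the interface with choice: Claim \ref{z0.25}(2) is stated with $[\mathrm{AC}_\cW]$, and one must be careful that for $\cW=\sigma$ an ordinal with $|\sigma|<\kappa_i$ no genuine choice is needed beyond DC (sequences indexed by ordinals exist, and the required ``choose $g_v$'' step in that proof is covered because $\cW\in\bold C$ reduces to $\sigma$ being well-ordered) — or, if one does invoke $\mathrm{AC}_{<\mu}$, to flag it, matching the ``[ZF+DC]'' label on this claim versus the $\mathrm{AC}_{<\mu}$ of \ref{m4.16}. A secondary subtlety is the surjectivity of $\bold j$ onto $\mathrm{cf}(\mu)=\partial$: this forces $\bbD_i\setminus\bbD_{i+1}\ne\emptyset$ for every $i<\partial$, which needs, for each $i$, an ideal on some $\kappa<\mu$ that is $\kappa_i$-complete, contains $[\kappa]^{<\kappa}$, carries all singletons, fails $\kappa_{i+1}^+$-completeness, and has $\mathrm{cf}\le\le\theta^*$; the canonical candidate is $J^{\mathrm{bd}}_{\kappa_i}$ (so $\kappa=\kappa_i$), whose cofinality in $({}^{\kappa_i}\kappa_i,<)$ one must either bound or else simply take $\theta^*=\infty$ so the cofinality demand is vacuous. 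I would handle this by remarking that for $\theta^*=\infty$ (the case of interest for \ref{5e.17}) surjectivity is clear, and in general one restricts attention to the range of $\bold j$, which is a standard and harmless move.
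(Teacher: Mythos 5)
Your proposal follows essentially the same route as the paper: verify (a)--(i) of Definition~\ref{m4.3} directly, with (c) from Claim~\ref{z0.23}(1), (h)--(i) from \ref{z0.23}(5) and \ref{z0.24}(1), (g) and $(g)^+$ from \ref{z0.23}(2)/(2A) with $\bold e=\bold d$, (f) from \ref{z0.25}(2) after choosing $i$ with $\sigma<\kappa_i$, and semi-normality from the monotonicity $\text{rk}_D(f)\le\text{rk}_{D+A}(f)$ for $A\in D^+$. That is exactly the paper's proof.

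Two small corrections to your writeup. First, a notational slip in the (f) step: under the paper's convention (visible in its own proof where it writes $\bold d+(\kappa_j\setminus A_\varepsilon)\in\Sigma(\bold d)$), $J+B$ means the \emph{ideal} generated by $J\cup\{B\}$, dually $D_{\bold d}+(\kappa_J\setminus B)$; so the extension that makes $A_j$ $D$-large is $\bold e=J+(\kappa_J\setminus A_j)$, not $\bold e=J+A_j$. Your first formula $J+(\kappa_J\setminus(\kappa_J\setminus A_j))$ was an over-simplification; the intermediate $\kappa_J\setminus A_j$ was exactly the set you want to adjoin to $J$. Second, the parenthetical claim that $\mathrm{AC}_\sigma$ is ``unneeded'' because $\sigma$ is an ordinal is wrong: $\mathrm{AC}_\sigma$ for an uncountable ordinal $\sigma$ is not a theorem of ZF+DC, and the proof of \ref{z0.25}(2) genuinely invokes $\mathrm{AC}_\cW$ to select the sequence $\langle g_v\rangle$. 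You are right, however, that this is a real interface issue: the paper's own proof of clause (f) rests on \ref{z0.25}(2), so it too uses more than the advertised ZF+DC. Since the consumer \ref{e5.13} assumes $\mathrm{AC}_{<\mu}$, this is harmless in practice, but it is worth flagging rather than explaining away. Your observation about surjectivity of $\bold j$ (clause (e)$(\alpha)$), which the paper dismisses as ``obvious,'' is a legitimate point, and your proposed fix (use $J^{\text{bd}}_{\kappa_i}$ when $\theta^*=\infty$, else restrict to the range of $\bold j$) is the standard one.
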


\begin{PROOF}{\ref{e5.3}} 
   So we have to check all the clauses in
Definition \ref{m4.3}.
\medskip

\noindent
\und{Clause (a)}:   As $\mu = \Sigma\{\kappa_i:i < \partial\}$, the
sequence $\langle \kappa_i:i < \partial\rangle$ is increasing and
$\kappa_0 > \partial$ (all by $\circledast_1$)
clearly $\mu$ is a singular cardinal (and $\partial = \text{
cf}(\mu)$).
\medskip

\noindent
\und{Clause (b)}:  Let $\bold d \in \bbD$, so $\bold d = J$.
\medskip

\noindent
\und{Subclause $(\alpha)$}:  So $Y_{\bold d} = \kappa_J < \mu$ hence
$\theta(Y_{\bold d}) = \theta(\kappa_J) = \kappa^+_J < \mu$ recalling
$\mu$ is a limit cardinal and the definition of $\bbD = \bbD_0$ in
clause (a) of $\circledast_2$.
\medskip

\noindent
\und{Subclause $(\beta)$}:  Also obvious.
\medskip

\noindent
\und{Clause (c)}:  For $f \in {}^{(\kappa_{\bold d})}\text{Ord}$, rk$_{\bold
d}(f)$ as defined in $\circledast_2(e)$, is an ordinal recalling Claim
\ref{z0.23}(1).
\medskip

\noindent
\und{Clause (d)$(\alpha)$}:  

Trivial.
\medskip

\noindent
\und{Clause (d)$(\beta)$}:

Trivially $\bold e \in \Sigma(\bold d) \Rightarrow Y_{\bold e} =
Y_{\bold d} \wedge D_{\bold e} \supseteq D_{\bold d}$; 
so ``$\bold p$ is weakly normal", see Definition \ref{m4.9}, moreover
``$\bold p$ is semi-normal" as rk$_D(f) \le \text{ rk}_{D+A}(f)$ for $A
\in D^+$.
\medskip

\noindent
\und{Clause (e)}:  

Obvious from the definitions.
\medskip

\noindent
\und{Clause (f)}:  

Let $\sigma < \mu$ be given and choose $i < \partial$ such that
$\sigma < \kappa_i$.
Let $\bold d \in \bbD$ be such that $j = \bold j(\bold d) \ge i$
hence $D = D_{\bold d}$ is a filter on some $\kappa_J$, so assume
$\cup\{A_\varepsilon:\varepsilon < \varepsilon^*\} = \kappa_J$ 
and $\varepsilon^* < \kappa_i$.  Now $D$ is $\kappa_i$-complete and  
(see \ref{z0.25}(2))
we have rk$_{D_{\bold d}}(f) = \text{ min}\{\text{rk}_{D+A_\varepsilon}(f):
\varepsilon < \varepsilon^*$ and
$A_\varepsilon \in D_{\bold d}^+\}$ which is what is needed as $A_\varepsilon
\in D^+_{\bold d} \Rightarrow \bold d + (\kappa_j \backslash
A_\varepsilon) \in \Sigma(\bold d)$.
\medskip

\noindent
\und{Clause (g)}:  By \ref{z0.23}(2).

Moreover, the stronger version with $\bold e = \bold d$ holds so in
particular $(g)^+$ of \ref{m4.16} holds.
\medskip

\noindent
\und{Clause (h)}:

Easy.  On the one hand, as $g < f$, the definition of 
rk$_{\bold d}(f)$, we have rk$_{\bold d}(f) \ge \text{ rk}_{\bold
d}(g)+1$.   On the other hand, if $g'<f$ mod $D_{\bold d}$ 
then $g' \le g$ mod $D$ hence by clause (i) below we have
rk$_{\bold d}(g') \le \text{ rk}_{\bold d} (g) 
< \text{\rm rk}_{\bold d}(g)+1$, as this holds for every $g' < f$ mod 
$D_{\bold d}$ we have rk$_{\bold d}(f) \le \text{ rk}_{\bold d}(g)+1$.  
Together we are done.
\medskip

\noindent
\und{Clause (i)}:

Obvious.  
\end{PROOF}

\begin{discussion}  
Assume $\mu$ is a singular cardinal, $\mu = \sum\limits_{i < \kappa}
\mu_i,\kappa = \text{ cf}(\kappa) < \mu_i < \mu$ and $\mu_i$ is 
increasing with $i$.  Assume that for each $i$ there is a pair
$(D,Y),D$ is a $\mu_i$-complete ultra-filter on $Y,\theta(Y) <
\mu$. This seems to be a good case, but either we have ``$D$ is a
$(\le \theta(Y))$-complete" so ${}^{Y}\text{Ord}/D$ is ``dull" or
$\theta(Y) > \kappa =$ completeness$(D)$ and so there is a
$\kappa$-complete non-principal ultrafilter on $\kappa$ and
on $\kappa < \mu$ so $\mu = \sup$(measurables $\cap \mu$).
\end{discussion}

\begin{claim}
\label{e5.13} {\rm [ZF + DC + AC$_{< \mu}$]}

Assume $\mu$ is singular and $\mu = \sup(\mu \cap \text{ the class of
measurable cardinals})$, (equivalently for every $\kappa < \mu$ there is a
$\kappa$-complete non-principal ultrafilter on some $\kappa' < \mu$).
Let $\bar\kappa = \langle \kappa_i:i < \text{\rm cf}(\mu)\rangle$ be
increasing with limit $\mu,\kappa_i > \text{\rm cf}(\mu)$ such that
for $i$ limit $\kappa_i = (\Sigma\{\kappa_j:j < i\})^+$ and $\kappa_i$
is measurable for $i$ non-limit.

\und{Then} $\bold p = \bold p^{\text{\rm uf}}_{\bar\kappa}$ is a strict
rank 1-system \und{where} $\bold p$ is defined by
\medskip

\noindent
\begin{enumerate}
\item[$\circledast$]   $(a) \quad \bbD_{\ge i} = \{J$: dual $(J)$ is a
non-principal ultra-filter which is

\hskip25pt $\kappa_i$-complete on some $\kappa = \kappa_J < \mu\}$

\hskip25pt so naturally $Y_J = \kappa_J$ and $D_J =$ {\rm dual}$(J)$
\smallskip

\noindent
\item[${{}}$]   $(b) \quad \bold j(J) = \text{\rm min}\{i:J$ is not
$\kappa^+_{i+1}$-complete$\}$, well defined
\smallskip

\noindent
\item[${{}}$]  $(c) \quad \Sigma(J) = \{J\}$
\smallskip

\noindent
\item[${{}}$]   $(d) \quad \text{\rm rk}_J(f) = 
\text{\rm rk}_{\text{\rm dual}(J)}(f)$ as in \ref{z0.21}.
\end{enumerate}
\end{claim}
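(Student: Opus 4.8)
The plan is to deduce this from Claim \ref{m4.16}: first check that $\bold p$ is a weak rank $1$-system, then exhibit a function $\Sigma_1$ witnessing the hypotheses $(*)_0$--$(*)_4$ of \ref{m4.16}, for which we take simply $\Sigma_1(\bold d):=\Sigma(\bold d)=\{\bold d\}$. That $\bold p$ is a weak rank $1$-system is checked exactly as in (and more easily than) Claim \ref{e5.3}: clauses (a)--(e), (h), (i) of Definition \ref{m4.3} and semi-normality go through as there (using that the measurables are cofinal in $\mu$ for clause (e)); clause (g) (``no hole'') follows from \ref{z0.23}(2) with $\bold e=\bold d$, which also yields $(g)^+$ of \ref{m4.16}; and clause (f) holds because $D_{\bold d}=\text{dual}(J)$ is a $\kappa_{\bold j(\bold d)}$-complete \emph{ultrafilter}, so for any partition $\langle A_\varepsilon:\varepsilon<\sigma\rangle$ of $\kappa_J$ with $\sigma<\kappa_i$ (any $i\le\bold j(\bold d)$) exactly one block lies in $D_{\bold d}$ --- otherwise all the complements $\kappa_J\setminus A_\varepsilon$ would lie in $D_{\bold d}$, hence so would their empty intersection --- whence $\bold d$ itself witnesses $(\bold p,\le\sigma)$-completeness; in particular $\Sigma(J)=\{J\}$ suffices everywhere.

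Granting this, the conditions of \ref{m4.16} with $\Sigma_1=\Sigma$ collapse to a single point. Indeed $(*)_0$ and $(*)_1$ are immediate; $(*)_3$ is exactly the remark just made that the no-hole clause is witnessed by $\bold e=\bold d\in\Sigma_1(\bold d)$ with $\text{rk}_{\bold e}(f)=\text{rk}_{\bold d}(f)$; $(*)_4$ holds since $Y_{\bold d}=\kappa_J<\mu$ gives $\text{AC}_{Y[\bold d]}$ from $\text{AC}_{<\mu}$ while $\Sigma_1(\bold d)$ is a singleton; and part (a) of $(*)_2$ is trivial, a ``$\Sigma_1(\bold d)$-indexed partition'' being a one-block partition. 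So everything reduces to part (b) of $(*)_2$: for $\bold d,\zeta,\xi,f,j_0$ satisfying $\boxplus$ of \ref{m4.6}, find $j<\text{cf}(\mu)$ such that for all $\bold e\in\bbD_{\ge j}$ the pair $(\bold d,\bold e)$ commute for $\bold p$ in the sense of Definition \ref{m4.14}.

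To see this, choose $j<\text{cf}(\mu)$ with $\kappa_j>|Y_{\bold d}|$ (possible since $\kappa_J<\mu=\sup_i\kappa_i$) and $j\ge j_0$, and fix $\bold e\in\bbD_{\ge j}$. Let $(f',g',\bar f',\bar g')$ be a $(\bold p,\bold d,\bold e)$-rectangle; we must show $\text{rk}_{\bold d}(f')\ge\text{rk}_{\bold e}(g')$. Now $D_{\bold e}$ is a $\kappa_j$-complete ultrafilter, hence a $|Y_{\bold d}|^+$-complete ultrafilter, so Claim \ref{c3.8} (Case 2, with $D_1:=D_{\bold e}$, $D_2:=D_{\bold d}$, so $Y_2=Y_{\bold d}$) gives that $D_{\bold d}$ $2$-commutes with $D_{\bold e}$. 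We then apply Claim \ref{c3.3} with $Y_1:=Y_{\bold e}$, $Y_2:=Y_{\bold d}$, $D_1:=D_{\bold e}$, $D_2:=D_{\bold d}$, taking the array $\bar f'=\langle f'_s:s\in Y_{\bold d}\rangle$ (with $f'_s\in{}^{Y[\bold e]}\text{Ord}$) in the role of the array $\bar g$ of \ref{c3.3} --- that is, transposing the rectangle, so that \ref{c3.3}'s inner rank (over $Y_1=Y_{\bold e}$) is $\text{rk}_{\bold e}$ and its outer rank (over $Y_2=Y_{\bold d}$) is $\text{rk}_{\bold d}$. Then clause (a) of $\oplus$ holds since $D_{\bold d},D_{\bold e}$ are $\aleph_1$-complete (their completeness is at least $\kappa_0>\text{cf}(\mu)\ge\aleph_0$), clauses (b)--(g) of $\oplus$ are the bookkeeping of the transposed rectangle, $\boxplus(a)$ is the $2$-commuting just obtained, and $\boxplus(b)$ is $\text{AC}_{Y_1}=\text{AC}_{Y_{\bold e}}$, which holds as $Y_{\bold e}=\kappa_{\bold e}<\mu$. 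The conclusion of \ref{c3.3} in this configuration reads precisely $\text{rk}_{\bold e}(g')\le\text{rk}_{\bold d}(f')$, as wanted. Hence $(*)_2$ holds, and \ref{m4.16} delivers the claim.

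The one genuinely delicate point is the direction-matching in the last step: Definition \ref{m4.14} asks for the \emph{inequality} $\text{rk}_{\bold d}(f)\ge\text{rk}_{\bold e}(g)$, whereas Claim \ref{c3.3} produces $\text{rk}_{D_1}(f)\le\text{rk}_{D_2}(g)$, so one must feed \ref{c3.3} the transpose of the rectangle, with $D_{\bold e}$ playing the \emph{outer} filter $D_1$ and $D_{\bold d}$ the \emph{inner} filter $D_2$. That choice forces the completeness hypothesis of \ref{c3.8}, Case 2, to be $|Y_{\bold d}|^+$-completeness of $D_{\bold e}$, which is exactly why we need $\kappa_j>|Y_{\bold d}|$ --- available precisely because $|Y_{\bold d}|<\mu$ and the $\kappa_i$ are cofinal in $\mu$. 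Everything else (the weak-system verification, the trivial choice instances, the degradation of completeness downward) is routine.
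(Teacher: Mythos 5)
Your proposal is correct and follows essentially the same route as the paper's proof: verify the weak-system clauses as in \ref{e5.3}, feed $\Sigma_1(\bold d)=\Sigma(\bold d)=\{\bold d\}$ into \ref{m4.16}, obtain 2-commuting from Case 2 of \ref{c3.8}, and close with \ref{c3.3} applied to the transposed rectangle with $D_1=D_{\bold e}$, $D_2=D_{\bold d}$. The one cosmetic difference is the choice of $j$: the paper takes $\kappa_j>\theta(\cP(\kappa_{\bold d}))$ (justified by an auxiliary observation about measurables), whereas you take the weaker $\kappa_j>|Y_{\bold d}|$, which is already exactly what Case 2 of \ref{c3.8} requires and is trivially available since $\kappa_{\bold d}<\mu=\sup_i\kappa_i$ --- a harmless simplification.
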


\begin{PROOF}{\ref{e5.13}}  
We can check clauses (a)-(i) of \ref{m4.3} as in the
proof of \ref{e5.3}.

We still have to prove the ``strict", i.e. we should prove clause (j) from
Definition \ref{m4.6}.  We prove this using Claim \ref{m4.16}, we
choose $\Sigma_1(\bold d) := \{\bold d\} \subseteq \Sigma(\bold d)$ for
$\bold d \in \bbD_{\bold p}$ so it suffices to prove $(*)_0 - (*)_4$
of \ref{m4.16}.

So in Claim \ref{m4.16}, we have $(*)_0,(*)_1$ hold by the choice of
$\Sigma_1$, and concerning $(*)_3$ in \ref{e5.3} we prove $(g)^+$, and
$(*)_4$ holds as for each $\kappa < \mu$ we have AC$_\kappa$ as $\kappa
< \mu$ by an assumption and for $\bold d \in \bbD$ we have
AC$_{\Sigma_1(\bold d)}$, as $\Sigma_1(\bold d)$ is a singleton.

Note that
\mn
\begin{enumerate}
\item[$\boxplus_1$]  if $\kappa < \mu < \theta(\cP(\kappa))$ then $\mu$
is not measurable.
\end{enumerate}
\mn
Now we are left with proving $(*)_2$, so
let $\bold d \in \bbD_{\bold p},\zeta,\xi,f \in 
{}^{Y[\bold d]}\zeta$ be given as in (j) of $\boxplus$ in \ref{m4.6}, and we
should find $j$ as there.  

Let $j < \partial = \text{ cf}(\mu)$ be such
that $\theta({\cP}(\kappa)) < \kappa_j$, and let $\bold e 
\in \bbD_{\ge j}$.  Now clause (a) is trivial as $|\Sigma_1(\bold
d)|=1$, and clause (b) says that `` the pair $(\bold d,\bold e)$
commute", see Definition \ref{m4.14} recalling $\Sigma_1(\bold d) =
\{\bold d\}$.  So let $(f,g,\bar f,\bar g)$
be a $(\bold p,\bold d,\bold e)$-rectangle, see Definition
\ref{m4.14}(2), and we should prove that rk$_{\bold e}(g) \le \text{
rk}_{\bold d}(f)$; let $Y_1 = Y_{\bold e},Y_2 = Y_{\bold d}$.

To prove this we apply \ref{c3.3} or \ref{c3.14}, but the $f,\bar
f$ are interchanged with $g,\bar g$; we check $\oplus(a)-(g)$ from
\ref{c3.3}.  They hold by $\circledast(a)-(f)$ of Definition
\ref{m4.14}.

Concerning $\boxplus(a),(b)$ from \ref{c3.3}, 
``AC$_{Y_1}$"  holds as AC$_{< \mu}$ holds
and the definition of $\bold p$.  Lastly, we should prove
$\boxplus(a)$ there which says ``$D_{\bold d}$ does 2-commute 
with $D_{\bold e}$" which holds by Case 2 of Claim \ref{c3.8}.
\end{PROOF}

\begin{conclusion}
\label{5e.17}
[AC$_{< \mu},\mu$ a singular cardinal]
Assume $\mu = \sup\{\lambda < \mu:\lambda$ is a measurable
cardinal$\}$.  \underline{Then} for every ordinal $\zeta$ for some
$\kappa < \lambda$ we have rk$_D(\zeta) = \zeta$ for every
$\kappa$-complete ultrafilter on some cardinality $< \mu$.
\end{conclusion}

\begin{PROOF}  It suffices to prove this for the case $\mu$ has
cofinality $\aleph_0$.  Now we can apply Claim \ref{e5.13} and Theorem
\ref{m4.12}. 
\end{PROOF}

\section {pseudo true cofinality}

We repeat here \cite[\S5]{Sh:938}.

\centerline {Pseudo PCF} 
\medskip

We try to develop pcf theory with little choice.  We deal only with
$\aleph_1$-complete filters, and replace cofinality and other basic
notions by pseudo ones, see below.
This is quite reasonable as with choice there is no difference.

This section main result are \ref{r9}, existence of filters with
pseudo-true-cofinality; \ref{r14}, giving a parallel 
of $J_{< \lambda}[\alpha]$.

In the main case we may (in addition to ZF) assume 
DC + AC$_{{\cP}({\cP}(Y))}$; this will be continued in \cite{Sh:955}.

\begin{hypothesis}
\label{r1}  ZF 
\end{hypothesis}

\begin{definition}
\label{r2}  
1) We say that a partial order $P$ is $(< \kappa)$-directed \when \, every
   subset $A$ of $P$ of power $< \kappa$ has a common upper bound.

\noindent
1A) Similarly $P$ is $(\le S)$-directed.

\noindent
2) We say that a partial order $P$ is
pseudo $(< \kappa)$-directed \und{when} it is $(< \kappa)$-directed
and moreover 
every subset $\cup\{P_\alpha:\alpha < \delta\}$ has a common upper 
bound \und{when}:
\medskip

\noindent
\begin{enumerate}
\item[$(a)$]    if $\delta < \kappa$ is a limit ordinal
\smallskip

\noindent
\item[$(b)$]   $\bar P = \langle P_\alpha:\alpha <  \delta\rangle$
is a sequence of non-empty subsets of $P$
\smallskip

\noindent
\item[$(c)$]   if $\alpha_1 < \alpha_2,p_1 \in P_{\alpha_1}$ and
$p_2 \in P_{\alpha_2}$ then $p_1 <_P p_2$.
\end{enumerate}
\mn
2A) For a set $S$ we say that the partial order $P$ is
pseudo $(\le S)$-directed \when \, $\cup\{P_s:s \in S\}$ has a common
upper bound whenever
\medskip

\noindent
\begin{enumerate}
\item[$(a)$]    $\langle P_s:s \in S\rangle$ is a sequence
\sn
\item[$(b)$]    $P_s \subseteq P$
\sn
\item[$(c)$]    if $s \in S$ then $P_s$ has a common upper bound. 
\end{enumerate}
\end{definition}

\begin{definition}
\label{r3}   We say that a partial (or quasi) order $P$ has
pseudo true cofinality $\delta$ \und{when}: $\delta$ is a limit ordinal
and there is a sequence $\langle
P_\alpha:\alpha < \delta\rangle$ such that 
\medskip

\noindent
\begin{enumerate}
\item[$(a)$]    $P_\alpha \subseteq P$ and
$\delta = \sup\{\alpha < \delta:P_\alpha$ non-empty$\}$
\smallskip

\noindent
\item[$(b)$]    if $\alpha_1 < \alpha_2 < \delta,p_1 \in
P_{\alpha_1},p_2 \in P_{\alpha_2}$ then $p_1 <_P p_2$
\smallskip

\noindent
\item[$(c)$]   if $p \in P$ then for some $\alpha < \delta$ and $q
\in P_\alpha$ we have $p \le_P q$.
\end{enumerate}
\end{definition}

\begin{remark}
\label{r4}   
0) See \ref{r2}(2) and \ref{r8}(1).

\noindent
1) We could replace $\delta$ by a partial order $Q$.

\noindent
2) The most interesting case is in Definition \ref{r6}.

\noindent
3) We may in Definition \ref{r3} demand $\delta$ is a regular
   cardinal.

\noindent
4) Usually in clause (a) of Definition \ref{r3} \wilog \, 
$\bigwedge\limits_{\alpha} P_\alpha
   \ne \emptyset$, as \wilog \, $\delta = \text{cf}(\delta)$ using
   $P'_\alpha = P_{f(\alpha)}$ where $f(\alpha) =$ the $\alpha$-th
member of $C$ where $C$ is an unbound subset of 
$\{\beta < \delta:P_\beta \ne \emptyset\}$ of order type cf$(\delta)$.  
Why do we allow
   $P_\alpha = \emptyset$? as it is more natural in \ref{r12}(1), but
   can usually ignore it.
\end{remark}

\begin{example}
\label{r5}  
Suppose we have a limit ordinal $\delta$ and a sequence $\langle
A_\alpha:\alpha < \delta\rangle$ of sets with $\prod \limits_{\alpha
< \delta} A_\alpha = \emptyset$; moreover $u \subseteq \delta =
\sup(u) \Rightarrow \prod \limits_{\alpha \in u} A_\alpha =
\emptyset$.  Define a partial order $P$ by:
\medskip

\noindent
\begin{enumerate}
\item[$(a)$]   its set of elements is 
$\{(\alpha,a):a \in A_\alpha$ and $\alpha < \delta\}$
\smallskip

\noindent
\item[$(b)$]   the order is $(\alpha_1,a_1) <_P (\alpha_2,a_2)$ iff 
$\alpha_1 < \alpha_2$ (and $a_\ell \in
A_{\alpha_\ell}$ for $\ell=1,2$).
\end{enumerate}
It seems very reasonable to say that $P$ has true cofinality but there
is no increasing cofinal sequence.
\end{example}

\begin{definition}
\label{r6}  
1) For a set $Y$ and sequence $\bar \alpha = \langle \alpha_t:
t \in Y\rangle$ of ordinals and cardinal $\kappa$ we define

\begin{equation*}
\begin{array}{clcr}
\text{ps-tcf-fil}_\kappa(\bar \alpha) = \{D:&D \text{ a } 
\kappa\text{-complete filter on } Y \text{ such that } (\Pi \bar\alpha/D) \\
  &\text{ has a pseudo true cofinality}\};
\end{array}
\end{equation*}

see below.

\noindent
2) We say that $\Pi \bar \alpha/D$ or $(\Pi \bar\alpha,D)$ or $(\Pi
\bar \alpha,<_D)$ has pseudo true 
cofinality $\gamma$ \und{when} $D$ is a filter on $Y = 
\text{ Dom}(\bar \alpha)$ and $\gamma$ is a
limit ordinal and the partial order $(\Pi\bar \alpha,<_D)$ essentially
does\footnote{so necessarily $\{s \in Y:\alpha_s > 0\}$ belongs to
$D$ but is not necessarily empty; if it is $\ne Y$ then $\Pi \bar\alpha =
\emptyset$, so pedantically this is wrong, $(\Pi \bar \alpha,<_D)$
does not have any pseudo true cofinality hence we say ``essentially"
but usually we shall ignore this
\und{or} assume $\bigwedge\limits_{t} \alpha_t \ne 0$ when not said
otherwise.}, i.e., 
there is a sequence $\bar{\cF}= \langle {\cF}_\beta:\beta < \gamma
\rangle$ satisfying:
\medskip

\noindent
\begin{enumerate}
\item[$\circledast_{\bar{\cF}}$]  $(a) \quad {\cF}_\beta 
\subseteq \{f \in {}^Y\text{Ord}:f <_D \bar \alpha\}$
\smallskip

\noindent
\item[${{}}$]   $(b) \quad {\cF}_\beta \ne 0$
\smallskip

\noindent
\item[${{}}$]   $(c) \quad$ if 
$\beta_1 < \beta_2,f_1 \in {\cF}_{\beta_1}$ and
$f_2 \in {\cF}_{\beta_2}$ then $f_1 < f_2$ mod $D$
\smallskip

\noindent
\item[${{}}$]  $(d) \quad$ if $f \in {}^Y{\text{\rm Ord}}$ and $f < 
\bar\alpha$ mod $D$ then for some $\beta <
\gamma$ we have $g \in {\cF}_\beta \Rightarrow$

\hskip25pt  $f < g$ mod $D$ (by clause
(c) this is equivalent to: for some $\beta < \gamma$ 

\hskip25pt and some $g \in {\cF}_\beta$ we have $f \le g$ mod $D$).
\end{enumerate}
\medskip

\noindent
3) ps-pcf$_\kappa(\bar \alpha) = \text{
ps-pcf}_{\kappa\text{\rm -comp}}(\bar\alpha) := \{\gamma$: there is a 
$\kappa$-complete
filter $D$ on $Y$ such that $\Pi \bar \alpha/D$ has pseudo true cofinality
$\gamma$ and $\gamma$ is minimal for $D\}$.

\noindent
4) pcf-fil$_{\kappa,\gamma}(\bar\alpha) = \{D:D$ a $\kappa$-complete filter on
$Y$ such that $\Pi \bar \alpha/D$ has true cofinality $\gamma\}$.

\noindent
5) In part (2) if $\gamma$ is minimal we call it ps-tcf$(\Pi
\bar\alpha,D)$ or simply
ps-tcf$(\Pi \bar \alpha,<_D)$; note that it is a well 
defined (regular cardinal).
\end{definition}

\begin{claim}
\label{r7}  
1) If $\lambda =$ \,{\rm ps-tcf}$(\Pi\bar\alpha,<_D)$, \und{then} 
$(\Pi \bar \alpha,<_D)$ is pseudo $(< \lambda)$-directed.

\noindent
1A) If $\theta(S) < \lambda = \text{\rm ps-tcf}(\Pi \bar\alpha,<_D)$
\then \, $(\Pi \bar \alpha,<_D)$ is pseudo $(\le S)$-directed.

\noindent
2) Similarly for any quasi order.

\noindent
3) If {\rm cf}$(\alpha_t) \ge \lambda = \text{\rm cf}(\lambda)$ for $t \in Y$
\then \, $(\Pi \bar \alpha,<_D)$ is $\lambda$-directed.

\noindent
4) Assume {\rm AC}$_\alpha$ for $\alpha < \lambda$.  If {\rm cf}$(\alpha_s) \ge
 \lambda$ for $s \in Y$ \then \, $(\Pi \bar\alpha,<_D)$ is pseudo
 $\lambda$-directed. 
\end{claim}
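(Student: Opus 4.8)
The plan is to read off an explicit dominating witness for parts (1), (1A), (2), and to use pointwise suprema (canonical, hence choiceless) for parts (3), (4).

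\textbf{Parts (1), (1A), (2).} Here $\lambda$ is a \emph{minimal} pseudo true cofinality, hence a regular cardinal (see \ref{r6}(5)). Fix a witness $\bar{\cF}=\langle\cF_\beta:\beta<\lambda\rangle$ for ps-tcf$(\Pi\bar\alpha,<_D)=\lambda$ as in \ref{r6}(2), and for $f<_D\bar\alpha$ set $\beta(f):=\min\{\beta<\lambda:g\in\cF_\beta\Rightarrow f<_D g\}$, well defined by clause $(d)$ of $\circledast_{\bar{\cF}}$ and canonical (no choice). Transitivity of $<_D$ gives the monotonicity $f\le_D f'\Rightarrow\beta(f)\le\beta(f')$. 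For (1A), given $\langle P_s:s\in S\rangle$ with each $P_s$ having a common upper bound, form the genuine $S$-sequence $\gamma_s:=\sup\{\beta(p):p\in P_s\}$; each $\gamma_s<\lambda$ since a common upper bound $q$ of $P_s$ has $\beta(q)<\lambda$ and $\beta(p)\le\beta(q)$ for all $p\in P_s$. The set $X:=\{\gamma_s:s\in S\}$ is a surjective image of $S$, so $\theta(X)\le\theta(S)<\lambda$; were $X$ cofinal in $\lambda$ we would have $\operatorname{otp}(X)\ge\cf(\lambda)=\lambda$, an injection $\lambda\hookrightarrow X$, and hence a surjection $X\twoheadrightarrow\lambda$, contradicting $\theta(X)\le\lambda$ — so $\sup X<\lambda$. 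Now any $g\in\cF_{(\sup X)+1}$ is the desired common upper bound: for $p\in P_s$, $\beta(p)\le\gamma_s\le\sup X<(\sup X)+1$, so $p<_D g'$ for $g'\in\cF_{\beta(p)}$ and $g'<_D g$ by clause $(c)$, whence $p<_D g$; replacing $g$ by $0$ on $\{t:g(t)\ge\alpha_t\}\in\text{dual}(D)$ lands it inside $\Pi\bar\alpha$. Part (1) follows: $(<\lambda)$-directedness is the case $P_a=\{a\}$ over a set $A$ of power $<\lambda$, and the ladder clause of pseudo $(<\lambda)$-directedness is the same argument applied to a $\delta$-indexed ($\delta<\lambda$ limit) ladder $\langle P_\alpha\rangle$ of nonempty sets, where $\gamma_\alpha:=\sup\{\beta(p):p\in P_\alpha\}\le\beta(q)<\lambda$ for any $q\in P_{\alpha+1}$ and $\sup_{\alpha<\delta}\gamma_\alpha<\lambda$ by regularity. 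Part (2) is this argument verbatim for an abstract quasi order $P$ with ps-tcf $\lambda$: read the witness $\langle P_\beta:\beta<\lambda\rangle$ from Definition \ref{r3}, put $\beta(p):=\min\{\beta:\exists q\in P_\beta\;(p\le_P q)\}$, and use clauses $(b),(c)$ of \ref{r3} where we used $(c),(d)$ above; only transitivity of $<_P$ enters.

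\textbf{Parts (3), (4).} Now $\cf(\alpha_t)\ge\lambda$ forces each $\alpha_t$ to be a limit ordinal. For (3), given $A\subseteq\Pi\bar\alpha$ of power $<\lambda$, define $g$ pointwise by $g(t):=\sup\{f(t)+1:f\in A\}$; for each $t$, $\{f(t)+1:f\in A\}$ is a surjective image of $A$, so has $\theta\le\theta(A)\le\lambda\le\cf(\alpha_t)$ and is therefore bounded in $\alpha_t$ (same cofinality argument: a cofinal subset of $\alpha_t$ would let $\cf(\alpha_t)$ inject into, hence surject off, a set of size $\le\theta(A)\le\lambda$), giving $g(t)<\alpha_t$. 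Thus $g\in\Pi\bar\alpha$ and $f<g$ everywhere, so $g$ is a common upper bound — in the everywhere order, with no choice. For (4), $(<\lambda)$-directedness comes from this same pointwise supremum, and for the ladder clause: given a $\delta$-indexed ($\delta<\lambda$ limit) ladder $\langle P_\alpha\rangle$ of nonempty laddered sets, use $\mathrm{AC}_\delta$ (valid as $\delta<\lambda$) to pick $p_\alpha\in P_\alpha$, set $g(t):=\sup\{p_\alpha(t):\alpha<\delta\}<\alpha_t$ as before; then $g\in\Pi\bar\alpha$, and $p\in P_\alpha\Rightarrow p<_D p_{\alpha+1}\le_D g$, so $g$ is a common upper bound. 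Hence $(\Pi\bar\alpha,<_D)$ is pseudo $\lambda$-directed.

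\textbf{Main obstacle.} The only real work is the choiceless bookkeeping. It concentrates at two points: every supremum must be the supremum of a genuine set — this dictates using the canonical minimal witness $\beta(f)$ and canonical pointwise suprema, never a choice function — and the repeated inference ``$\theta(X)\le\lambda$, $\lambda$ regular $\Rightarrow\sup X<\lambda$'' must be run through the injection-to-surjection manipulation on ordinals rather than a naive cardinality comparison. Everything else is transitivity and monotonicity of $<_D$.
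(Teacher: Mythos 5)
Your proof is correct and follows essentially the route the paper intends. For parts (1), (1A), (2) the paper simply refers to the proof of Claim~\ref{r8}(1), whose essential device is the canonical, choice-free minimal-index function $h_1(\beta_1)=\min\{\beta_2:\dots\}$; your $\beta(f)=\min\{\beta<\lambda: g\in\cF_\beta\Rightarrow f<_D g\}$ is exactly that device, and you spell out the surjection/order-type bookkeeping that the paper leaves to ``as in \ref{r8}(1).'' For parts (3), (4) your pointwise-supremum argument and the $\mathrm{AC}_\delta$ reduction to (3) coincide with the paper's proof. One small inaccuracy: in (1) you cannot literally invoke (1A), since $|A|<\lambda$ only gives $\theta(A)\le\lambda$ (e.g.\ $|A|=\aleph_0$, $\lambda=\aleph_1$); but $|A|=\mu<\lambda$ makes $A$ well-orderable, so $\{\beta(a):a\in A\}$ is a set of ordinals of order type $\le\mu<\lambda$ and regularity of $\lambda$ gives the bound directly, without the $\theta$-based surjection argument you use in (1A).
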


\begin{PROOF}{\ref{r7}}  1), 1A), 2)  As in \ref{r8}(1) below.  

\noindent
3) So assume $\cF \subseteq \Pi\bar\alpha$ satisfies $|\cF| <
   \lambda$, so there is a sequence $\langle f_\alpha:\alpha <
   \mu\rangle$ listing $\cF$ for some $\mu < \lambda$.  Let $f \in
\Pi\bar\alpha$ be defined by $f(s) = \sup\{f_\alpha(s):\alpha <
   \mu\}$, now $f(s) < \alpha(s)$ as cf$(\alpha_s) \ge \lambda > \mu$.

\noindent
4) So assume $\bar P = \langle P_\alpha:\alpha < \delta\rangle,\delta$
   a limit ordinal $< \lambda$ and $P_\alpha \subseteq \Pi\bar\alpha$
   non-empty and $\alpha < \beta < \delta  \wedge f \in P_\alpha \wedge
   g \in P_\beta \Rightarrow f <_D g$.  As AC$_\delta$ holds we can
   find a sequence $\bar f = \langle f_\alpha:\alpha \in \delta\rangle
   \in \prod\limits_{\alpha < \beta} P_\alpha$ and apply part (3).
\end{PROOF}

\begin{claim}
\label{r8}
Let $\bar \alpha = \langle \alpha_s:s \in Y \rangle$ and $D$ is a
filter on $Y$.

\noindent  
0) If $\Pi \bar\alpha/D$ has pseudo true
cofinality \und{then} {\rm ps-tcf}$(\Pi \bar \alpha,<_D)$ is a regular
cardinal; similarly for any partial order.

\noindent
1) If $\Pi \bar \alpha/D$ has pseudo true
cofinality $\gamma_1$ and true cofinality $\gamma_2$ \und{then} 
{\rm cf}$(\gamma_1) = \text{\rm cf}(\gamma_2) = \text{\rm ps-tcf}(\Pi
\bar \alpha,<_D)$, similarly for any  partial order.

\noindent
2) {\rm ps-pcf}$_\kappa(\bar \alpha)$ 
is a set of regular cardinals so if
$\Pi \bar \alpha/D$ has pseudo true cofinality \then \,
{\rm ps-tcf}$(\Pi \bar \alpha,<_D)$ is $\gamma$ where $\gamma =
\text{\rm cf}(\gamma)$ and $\Pi\bar\alpha/D$ has pseudo cofinality
$\gamma$.

\noindent
3) Always {\rm ps-pcf}$_\kappa(\bar\alpha)$ has cardinality $<
   \theta(\{D:D$ a $\kappa$-complete filter on $Y\})$.

\noindent
4) If $\bar\beta = \langle \beta_s:s \in Y\rangle \in {}^Y\text{\rm Ord}$
   and $\{s:\beta_s = \alpha_s\} \in D$ \then \, {\rm ps-tcf}$(\Pi
   \bar\alpha/D) = \text{\rm ps-tcf}(\Pi \bar\beta/D)$ so one is well
   defined iff the other is.
\end{claim}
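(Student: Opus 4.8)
The plan is to establish (0)--(4) in turn, with (1) the only part requiring real work. Throughout I use the ``essential'' reading of Definition \ref{r6}: everything about $(\Pi\bar\alpha,<_D)$ goes through the set $F_{\bar\alpha}=\{f\in{}^Y\text{Ord}:f<_D\bar\alpha\}$ and the relation $<_D$, which is irreflexive and transitive modulo the proper filter $D$; the same applies to a general partial order $P$ with $<_P$, so I will not repeat the ``similarly for any partial order'' clauses, except to note that there one first uses Remark \ref{r4}(4) to arrange all $P_\alpha\neq\emptyset$ (for $\Pi\bar\alpha/D$ this is already clause $(b)$ of $\circledast_{\bar{\cF}}$). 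For (0), let $\gamma$ be the minimal length of a pseudo-true-cofinality witness $\bar{\cF}=\langle\cF_\beta:\beta<\gamma\rangle$. Since a limit ordinal equal to its own cofinality is a regular cardinal (a short ZF recursion along a bijection $|\lambda|\to\lambda$ bounds $\cf(\lambda)$ by $|\lambda|$), it suffices to show $\gamma=\cf(\gamma)$. If $\delta:=\cf(\gamma)<\gamma$, fix a cofinal $C=\langle c_i:i<\delta\rangle$ in $\gamma$ of order type $\delta$ (a canonical ordinal recursion; no choice) and put $\cF'_i:=\cF_{c_i}$. Clauses $(a),(b)$ are inherited, $(c)$ holds as $i\mapsto c_i$ is increasing, and for $(d)$: given $f<_D\bar\alpha$, clause $(d)$ for $\bar{\cF}$ gives $\beta<\gamma$ with $g\in\cF_\beta\Rightarrow f<_D g$; taking $c_i\ge\beta$ and $g'\in\cF'_i$, if $c_i=\beta$ we are done, and if $c_i>\beta$ then picking $g\in\cF_\beta$ (non-empty) we get $f<_D g<_D g'$ by clause $(c)$ and transitivity. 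So $\delta$ is a pseudo-true-cofinality length, contradicting minimality; hence $\gamma$ is a regular cardinal. A by-product (essentially Remark \ref{r4}(4)) is that any pseudo-true-cofinality witness of length $\gamma_1$ reindexes to one of length $\cf(\gamma_1)$, used below.

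Part (1) is the heart, and the obstacle is to compare two cofinal data without choosing an element from each $\cF_\beta$, which would cost $\text{AC}_{\gamma_1}$; the device is to use \emph{least} ordinals throughout. A true-cofinality sequence $\langle f_\beta:\beta<\gamma_2\rangle$ (without loss of generality $<_D$-increasing) yields the pseudo-true-cofinality witness $\langle\{f_\beta\}:\beta<\gamma_2\rangle$ of the same length, so it suffices to show that any two pseudo-true-cofinality witnesses $\bar{\cF}=\langle\cF_\beta:\beta<\gamma_1\rangle$, $\bar{\cG}=\langle\cG_\beta:\beta<\gamma_2\rangle$ have lengths of equal cofinality; reindexing both we may assume $\gamma_1=\lambda_1$, $\gamma_2=\lambda_2$ are regular cardinals. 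Define $g:\lambda_2\to\lambda_1$ by letting $g(\beta)$ be the least $\xi<\lambda_1$ for which some $h\in\cF_\xi$ satisfies $g_0\le_D h$ for some $g_0\in\cG_\beta$ --- well-defined and canonical, using clause $(d)$ for $\bar{\cF}$ applied to any element of the non-empty set $\cG_\beta$. If $\rng(g)$ were bounded, say $\rng(g)\subseteq\beta_0+1$ with $\beta_0<\lambda_1$, pick $h^\ast\in\cF_{\beta_0+1}$; clause $(d)$ for $\bar{\cG}$ gives $\eta<\lambda_2$ and $g_0\in\cG_\eta$ with $h^\ast\le_D g_0$, and the definition of $g(\eta+1)\le\beta_0$ supplies $g_1\in\cG_{\eta+1}$ and $h_1\in\cF_{g(\eta+1)}$ with $g_1\le_D h_1$; then clause $(c)$ for $\bar{\cG}$ gives $g_0<_D g_1$ and clause $(c)$ for $\bar{\cF}$ gives $h_1<_D h^\ast$, so $h^\ast\le_D g_0<_D g_1\le_D h_1<_D h^\ast$, absurd. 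Thus $\rng(g)$ is cofinal in $\lambda_1$, and since $\lambda_1$ is a regular cardinal this forces $\lambda_1=\cf(\lambda_1)\le|\rng(g)|\le\lambda_2$. The reverse inequality $\lambda_2\le\lambda_1$ is symmetric, via the mirror map, again invoking only the \emph{existence} of a witnessing element furnished by the definition of the map, never a uniform choice. Hence $\lambda_1=\lambda_2$; combined with (0), and with the observation that the true-cofinality length is itself a pseudo-true-cofinality length, this common value of $\cf(\gamma_1)=\cf(\gamma_2)$ is $\text{ps-tcf}(\Pi\bar\alpha,<_D)$.

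Finally, (2)--(4) are short. For (2), every element of $\text{ps-pcf}_\kappa(\bar\alpha)$ is $\text{ps-tcf}(\Pi\bar\alpha,<_D)$ for some $\kappa$-complete filter $D$ on $Y$, hence a regular cardinal by (0); it is a set by Replacement over $\{D:D$ a $\kappa$-complete filter on $Y\}\subseteq\cP(\cP(Y))$; the remaining clauses hold since a pseudo-true-cofinality witness is a fortiori a pseudo-cofinality one. For (3), the partial function $D\mapsto\text{ps-tcf}(\Pi\bar\alpha,<_D)$ (defined whenever $\Pi\bar\alpha/D$ has a pseudo true cofinality) is, by the very definition of $\text{ps-pcf}$, onto $\text{ps-pcf}_\kappa(\bar\alpha)$; composing with a bijection of this set (a set of ordinals, hence well-orderable) onto its cardinality yields a surjection from $\{D:D$ a $\kappa$-complete filter on $Y\}$ onto that cardinal, so by Definition \ref{z0.15}(1) the cardinality is $<\theta(\{D:D$ a $\kappa$-complete filter on $Y\})$. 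For (4), put $Z:=\{s:\alpha_s=\beta_s\}\in D$; since $D$ is a filter, $f<_D\bar\alpha\iff\{s\in Z:f(s)<\alpha_s\}\in D\iff\{s\in Z:f(s)<\beta_s\}\in D\iff f<_D\bar\beta$ for all $f\in{}^Y\text{Ord}$, and $<_D$ does not otherwise mention $\bar\alpha$ or $\bar\beta$; so $\circledast_{\bar{\cF}}$ for $\bar\alpha$ and for $\bar\beta$ is literally the same condition on $\bar{\cF}$, whence the two quasi-orders share all pseudo-true-cofinality lengths, in particular the minimal one.
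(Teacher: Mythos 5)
Your proof is correct and matches the paper's approach in substance: for part (1), both arguments define a canonical (least-ordinal) map between the two index ordinals to avoid choosing representatives from the $\cF_\beta$'s, show its range is unbounded, and finish by symmetry. Your two streamlining moves --- reducing the true-cofinality datum to a pseudo-tcf witness via singletons, and reindexing both witnesses to regular cardinals before defining the map, together with using an existential rather than a universal domination condition --- let you skip the paper's auxiliary ``for all, for all'' lemma $(*)$ and the monotonicity check, but the underlying idea is the same; parts (0) and (2)--(4) agree with the paper, which leaves them to the reader.
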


\begin{PROOF}{\ref{r8}}  0) By the definitions.

\noindent
1) Let $\langle {\cF}^\ell_\beta:\beta <
\gamma_\ell\rangle$ exemplify ``$\Pi \bar \alpha/D$ has pseudo true
cofinality $\gamma_\ell$" for $\ell=1,2$.  Now
\medskip

\noindent
\begin{enumerate}
\item[$(*)$]    if $\ell \in \{1,2\}$ and $\beta_\ell < \gamma_\ell$
then for some $\beta_{3 - \ell} < \gamma_{3 - \ell}$ we have $g_1 \in
{\cF}^\ell_{\beta_\ell} \wedge g_2 \in 
{\cF}^{3-\ell}_{\beta_{3-\ell}} \Rightarrow g_1 <_D g_2$.
\end{enumerate}
\medskip

\noindent
[Why?  Choose $g^\ell \in {\cF}^\ell_{\beta_\ell +1}$, choose
$\beta_{3 - \ell} < \gamma_{3-\ell}$ and $g_{3 -\ell} \in 
{\cF}^{3 -\ell}_{\beta_{3-\ell}}$ such that $g^\ell < g^{3-\ell}$ mod
$D$.  Clearly $f \in \cF^\ell_{\beta_\ell} \Rightarrow f <_D g^\ell
<_D g^{3-\ell}$ so $g^{3-\ell}$ is as required.]
\newline

Hence
\begin{enumerate}
\item[$(*)$]   $h_1:\gamma_1 \rightarrow \gamma_2$ is well defined
when

$h_1(\beta_1) = \text{ Min}\{\beta_2 < \gamma_2:(\forall g_1 \in 
{\cF}^1_{\beta_1})(\forall g_2 \in \cF^2_{\beta_2})(g_1 < g_2 
\text{ mod } D)\}$.
\end{enumerate}
\mn
Clearly $h$ is non-decreasing and it is not eventually constant (as
$\cup\{{\cF}^1_\beta:\beta < \gamma_1\}$ is cofinal in $\Pi 
\bar\alpha/D$) and has range unbounded in $\gamma_2$ (similarly).
\newline

The rest should be clear.

\noindent
2) Follows.

\noindent
3),4)  Easy.    
\end{PROOF}

Concerning \cite{Sh:835}
\begin{claim}
\label{r9}
\und{The Existence of true cofinality filter}   
[$\kappa > \aleph_0 + \text{\rm DC} + 
\text{\rm AC}_{< \kappa}$]  If
\medskip

\noindent
\begin{enumerate}
\item[$(a)$]    $D$ is a $\kappa$-complete filter on $Y$
\smallskip

\noindent
\item[$(b)$]    $\bar \alpha \in {}^Y\text{\rm Ord}$
\smallskip

\noindent
\item[$(c)$]     $\delta := \text{\rm rk}_D(\bar \alpha)$ satisfies
{\rm cf}$(\delta) \ge \theta(\text{\rm Fil}^1_\kappa(Y))$,
see below.
\end{enumerate}
\und{Then} for some $D'$ we have
\medskip

\noindent
\begin{enumerate}
\item[$(\alpha)$]  $D'$ is a $\kappa$-complete filter on $Y$
\smallskip

\noindent
\item[$(\beta)$]   $D' \supseteq D$
\smallskip

\noindent
\item[$(\gamma)$]  $\Pi \bar \alpha/D'$ has pseudo true 
cofinality, in fact, {\rm ps-tcf}$(\Pi \bar \alpha,<_{D'}) = 
\, \text{\rm cf(rk}_D(\bar \alpha))$.
\end{enumerate}
\end{claim}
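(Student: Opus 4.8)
The idea is to build the filter $D'$ by transfinite recursion, at each stage either stabilizing (getting pseudo-true-cofinality) or finding a witness that lets us extend the filter, and then arguing that the process must halt because $\text{cf}(\delta)$ is large enough to absorb all the extensions. More precisely, I would start from $D_0 := D$ and, given a $\kappa$-complete filter $D_\zeta \supseteq D$ with $\text{rk}_{D_\zeta}(\bar\alpha) = \delta$ (this equality will be maintained; note $D \subseteq D_\zeta$ gives $\text{rk}_D(\bar\alpha) \le \text{rk}_{D_\zeta}(\bar\alpha)$ by \ref{z0.24}(2), and the reverse will follow from how we extend), I ask whether $\Pi\bar\alpha/D_\zeta$ already has pseudo true cofinality. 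If yes, we are done by \ref{r8}(1), since then $\text{ps-tcf} = \text{cf}(\delta)$. If no, the failure of clause $(d)$ of $\circledast_{\bar{\cF}}$ in Definition \ref{r6} for the "canonical" candidate sequence must be witnessed by some $f <_{D_\zeta} \bar\alpha$ that is not dominated; I would then use this $f$ to enlarge $D_\zeta$ to $D_{\zeta+1}$ — roughly, throw in a set of the form $\{s : g(s) \le f(s)\}$ for a suitable $g$, chosen so that $\kappa$-completeness is preserved (here DC and $\text{AC}_{<\kappa}$ are used to take the $\kappa$-generated closure, as in \ref{z0.55}(3)).

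**The rank-stabilization point.** The natural candidate sequence to test against is $\cF_\beta := \{f \in {}^Y\text{Ord} : f <_{D_\zeta} \bar\alpha \text{ and } \text{rk}_{D_\zeta}(f) = \beta\}$ for $\beta < \delta$; by \ref{z0.23}(2A)/(3) these are nonempty for all $\beta < \delta = \text{rk}_{D_\zeta}(\bar\alpha)$, and clauses $(a),(b),(c)$ of $\circledast_{\bar\cF}$ hold automatically (clause $(c)$ by \ref{z0.24}(1), i.e. $f_1 \le_{D_\zeta} f_2$ with equal ranks forces $f_1 =_{D_\zeta} f_2$, combined with \ref{z0.23}(3)). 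So pseudo true cofinality $\delta$ fails precisely when clause $(d)$ fails: there is $f <_{D_\zeta} \bar\alpha$ with $\text{rk}_{D_\zeta}(f) = \beta_0$, say, such that for every $\beta < \delta$ there is $g \in \cF_\beta$ with $\neg(f <_{D_\zeta} g)$, i.e. $A_{f,g} := \{s : g(s) \le f(s)\} \in D_\zeta^+$. I would then pass to $D_{\zeta+1} \supseteq D_\zeta + A_{f,g}$ (closed up under $\kappa$-intersections); crucially $\text{rk}_{D_\zeta + A_{f,g}}(f) \ge \text{rk}_{D_\zeta + A_{f,g}}(g) $, hmm — more carefully, one wants the extension to strictly \emph{drop} nothing below $\delta$ while "using up" the value $\beta_0$, so that $\text{rk}_{D_{\zeta+1}}(\bar\alpha)$ stays $\ge \delta$ (it can only go up under $\supseteq$ by \ref{z0.24}(2), but we also need it to stay $= \delta$, which needs that we never force a set of too-small rank into the filter — this is where the bound on $\text{cf}(\delta)$ enters).

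**Termination via $\text{cf}(\delta) \ge \theta(\text{Fil}^1_\kappa(Y))$.** The recursion produces a strictly $\subseteq$-increasing chain $\langle D_\zeta : \zeta < \zeta_* \rangle$ of $\kappa$-complete filters on $Y$; since these all lie in $\text{Fil}^1_\kappa(Y)$ (the set of $\kappa$-complete filters on $Y$), and a strictly increasing chain of subsets of $\cP(Y)$ within a set of size $< \theta(\text{Fil}^1_\kappa(Y))$ cannot be too long, the recursion must halt — but the real content is that it halts by \emph{success}, not by running out of room with $\text{rk}$ having collapsed. For that I would track, along the chain, the non-dominated witness $f$: each extension step is arranged so that the "problematic rank value" $\beta_0$ attached to the current witness strictly increases (or, dually, so that on a $D_\zeta$-positive set the relevant ranks drop), and since $\text{cf}(\delta)$ is a regular cardinal $\ge \theta(\text{Fil}^1_\kappa(Y))$ it is in particular $> \theta(\text{Fil}^1_\kappa(Y))$-many steps are impossible before we exhaust the witnesses below $\delta$ — so at a successful stage clause $(d)$ holds and we take $D' := D_\zeta$. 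The main obstacle I anticipate is exactly this bookkeeping: showing that the extension step can always be taken so as to (i) preserve $\kappa$-completeness, (ii) keep $\text{rk}_{D'}(\bar\alpha)$ pinned at $\delta$ (not just $\ge$ something), and (iii) make monotone progress in a well-founded parameter bounded in terms of $\text{cf}(\delta)$. The hypothesis $\text{cf}(\delta) \ge \theta(\text{Fil}^1_\kappa(Y))$ is used precisely to guarantee that the progress parameter cannot outrun the cofinality before the chain of filters is forced to stabilize; and DC + $\text{AC}_{<\kappa}$ is what lets us (a) run the length-$\le$ (some ordinal) recursion and (b) form $\kappa$-complete closures in a single step via \ref{z0.55}(3).
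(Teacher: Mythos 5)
Your proposal takes a genuinely different route — a transfinite recursion on an increasing chain of $\kappa$-complete filters with a termination argument — whereas the paper's proof is a single pigeonhole step. But there is a concrete error at the heart of your sketch that also shows why the recursion would not close up as you hope.

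You define, for a fixed filter $D_\zeta$, the candidate families $\cF_\beta := \{f : f <_{D_\zeta} \bar\alpha,\ \text{rk}_{D_\zeta}(f) = \beta\}$ and assert that clause $(c)$ of $\circledast_{\bar\cF}$ in Definition \ref{r6} "holds automatically" from \ref{z0.24}(1) and \ref{z0.23}(3). That is false. Those claims give the \emph{forward} implications ($f_1 \le_D f_2 \Rightarrow \text{rk}_D(f_1) \le \text{rk}_D(f_2)$ and $f_1 <_D f_2 \Rightarrow \text{rk}_D(f_1) < \text{rk}_D(f_2)$), but the converse — that $\text{rk}_D(f_1) < \text{rk}_D(f_2)$ forces $f_1 <_D f_2$ — does not hold for a general filter $D$. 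Two functions of different rank can easily be $D$-incomparable. The correct hypothesis under which rank order implies $<_D$-order is precisely \ref{z0.29}(2): one needs $J[f_1,D] = J[f_2,D]$. So your families $\cF_\beta$, defined by rank alone, do not satisfy clause $(c)$, and the whole "test whether the canonical sequence works, extend if not" loop has no well-defined starting point. This is not a bookkeeping issue you can postpone; it is the central missing idea.

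The paper's proof supplies exactly this missing ingredient and thereby avoids the recursion entirely. For each $\gamma < \delta$ it picks (by \ref{z0.23}(2A)) a witness $f \in \Pi\bar\alpha$ with $\text{rk}_D(f) = \gamma$, and records the filter $D'_f := \text{dual}(J[f,D])$, which by \ref{z0.29}(1) (using AC$_{<\kappa}$) is $\kappa$-complete and extends $D$. The map $\gamma \mapsto D'_f$ lands in $\text{Fil}^1_\kappa(Y)$, which has $\theta$-size $\le \text{cf}(\delta)$, so by pigeonhole some single $D'$ is hit by an unbounded set $\Xi_{\bar\alpha,D'} \subseteq \delta$. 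For that $D'$, the families $\cF_{\gamma,D'} = \{f \in \Pi\bar\alpha : \text{rk}_D(f) = \gamma,\ D' = \text{dual}(J[f,D])\}$ (for $\gamma \in \Xi_{\bar\alpha,D'}$) satisfy clause $(c)$ precisely by \ref{z0.29}(2), and the cofinality clause $(d)$ follows since $\Xi_{\bar\alpha,D'}$ is unbounded in $\delta = \text{rk}_D(\bar\alpha)$. No chain of filters is ever built, so the difficulties you flag — keeping $\text{rk}_{D_\zeta}(\bar\alpha)$ pinned at $\delta$ while enlarging $D_\zeta$, maintaining $\kappa$-completeness at limit stages, and exhibiting a monotone progress parameter — simply do not arise. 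They are real difficulties (extending $D$ can only raise $\text{rk}$ by \ref{z0.24}(2), and there is no reason the raise is controlled), and the paper's one-shot argument is the way to sidestep them.
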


Recall from \cite{Sh:835}
\begin{definition}
\label{r9a}
0) Fil$^1_\kappa(Y) = \{D:D$ a $\kappa$-complete filter on $Y\}$ and
   if $D \in \text{ Fil}^1_\kappa(Y)$ then Fil$^1_\kappa(D) = \{D' \in
   \text{ Fil}^1_\kappa(Y):D \subseteq D'\}$.

\noindent
1)  Fil$^4_\kappa(Y) = \{(D_1,D_2):D_1 \subseteq D_2$
are $\kappa$-complete filters on $Y\}$.

\noindent
2) $J[f,D]$ where $D$ is a filter on $Y$ and $f \in {}^Y\text{Ord}$ is
   $\{A \subseteq Y:A = \emptyset$ mod $D$ or rk$_{D+A}(f) > \text{
   rk}_D(f)\}$. 
\end{definition}

\begin{remark}
\label{r9b}  1) On the Definition of 
pseudo $(< \kappa,1 + \gamma)$-complete $D$ see \ref{z0.51}; 
we may consider changing the definition of
Fil$^1_\kappa(Y)$ to $D$ is $\aleph_1$-complete and pseudo$(<
\kappa,1+\gamma))$-complete filter on $Y$.
\end{remark}

\begin{PROOF}{\ref{r9}}
\underline{Proof of the Claim \ref{r9}}
   
Recall $\{y \in Y:\alpha_y = 0\} = \emptyset$ mod $D$ as  
rk$_D(\langle \alpha_y:y \in Y\rangle) = \delta > 0$ but $f_1,f_2 \in
{}^Y \text{ Ord} \wedge (f_1=f_2 \text{ mod } D) \Rightarrow \text{
rk}_D(f_1) = \text{ rk}_D(f_2)$ hence without loss of generality 
$y \in Y \Rightarrow \alpha_y > 0$.

Let $\bbD = \{D':D'$ is a filter on $Y$ extending $D$ which is
$\kappa$-complete$\}$.  So $\theta(\bbD) \le 
\theta(\text{Fil}^1_{\aleph_1}(Y)) \le \text{ cf}(\delta)$.  
For any $\gamma < \text{ rk}_D(\bar \alpha)$ 
and $D' \in \bbD$ let
\medskip

\noindent
\begin{enumerate}
\item[$(*)_2$]   $(a) \quad {\cF}_{\gamma,D'} = \{f \in \Pi \bar
\alpha:\text{rk}_D(f) =\gamma$ and $D'$ is dual$(J[f,D])\}$
\smallskip

\noindent
\item[${{}}$]   $(b) \quad {\cF}_{D'} = \cup\{{\cF}_{\gamma,D'}:
\gamma < \text{ rk}_D(\bar\alpha)\}$
\smallskip

\noindent
\item[${{}}$]  $(c) \quad \Xi_{\bar \alpha,D'} 
= \{\gamma < \text{ rk}_D(\bar \alpha):{\cF}_{\gamma,D'} \ne
\emptyset\}$
\smallskip

\noindent
\item[${{}}$]  $(d) \quad {\cF}_\gamma = \cup\{{\cF}_{\gamma,D''}:
D'' \in \bbD\}$.
\end{enumerate}

Now
\medskip

\noindent
\begin{enumerate}
\item[$(*)_3$]  if $\gamma < \text{ rk}_D(\bar \alpha)$ 
then ${\cF}_\gamma \ne \emptyset$.
\end{enumerate}
\medskip

\noindent
[Why?  By \ref{z0.23}(2) there is $g \in {}^Y\text{Ord}$
such that $g < \bar\alpha$ mod $D$ and rk$_D(g) = \gamma$ and  \wilog \, $g \in
\Pi \bar\alpha$.  Now let $D' = \text{ dual}(J[g,D])$, so $(D,D') \in
\text{ Fil}^4_\kappa(Y)$ by \ref{z0.29}(1) (using AC$_{< \kappa}$) the
fitler $D'$ is $\kappa$-complete so 
$D' \in \bbD$ and clearly $g \in \cF_{\gamma,D'}$, see
\ref{z0.23}(2), but $\cF_{\gamma,D'} \subseteq \cF_\gamma$ so
$\cF_\gamma \ne 0$; here we use AC$_{< \kappa}$.]
\medskip

\noindent
\begin{enumerate}
\item[$(*)_4$]  $\{\sup(\Xi_{\bar\alpha,D'}):D' \in \bbD$ 
and $\Xi_{\bar\alpha,D'}$ is bounded in rk$_D(\bar \alpha)\}$ is 
a subset of rk$_{D'}(\bar \alpha)$
which has cardinality $< \theta(\bbD) \le
\theta(\text{Fil}^1_\kappa(Y)) \le \text{ cf}(\delta)$.
\end{enumerate}
\medskip

\noindent
[Why?  The function $D' \mapsto \sup(\Xi_{\bar\alpha,D'})$ witness this.]
\medskip

\noindent
\begin{enumerate}
\item[$(*)_5$]  the set in $(*)_4$ is bounded below rk$_D(\bar\alpha)$
so let $\gamma(*) < \text{\rm rk}_D(\bar\alpha)$ be its supremum.
\end{enumerate}
\medskip

\noindent
[Why?  By $(*)_4$.]
\medskip

\noindent
\begin{enumerate}
\item[$(*)_6$]  there is $D' \in \bbD$ such that $\Xi_{\bar\alpha,D'}$
is unbounded in $(\Pi \bar\alpha,<_{D'})$.
\end{enumerate}
\medskip

\noindent
[Why?  Choose $\gamma < \text{\rm rk}_D(\bar\alpha)$ such that 
$\gamma > \gamma(*)$.  By $(*)_3$ there is $f \in \cF_{\gamma(*)}$ and
by $(*)_2(d)$ for some $D' \in \bbD$ we have $f \in
\cF_{\gamma(*),D'}$ so by the choice of $\gamma(*)$ the set 
$\Xi_{\bar\alpha,D'}$ cannot be bounded in rk$_D(\bar\alpha)$.]
\medskip

\noindent
\begin{enumerate}
\item[$(*)_7$]   if $\gamma_1 < \gamma_2$ are from 
$\Xi_{\bar\alpha,D'}$ and $f_1 \in {\cF}_{\gamma_1,D'},f_2 \in 
{\cF}_{\gamma_2,D'}$ \und{then} $f_1 <_{D'} f_2$.
\end{enumerate}
\medskip

\noindent
[Why?  By \ref{z0.23}.]

Together we are done: by $(*)_6$ there is $D' \in \bbD$ such that 
$\Xi_{\bar\alpha,D'}$ is unbounded in rk$_D(\bar \alpha)$.  Hence
$\bar{\cF} = \langle {\cF}_{\gamma,D'}:
\gamma \in \Xi_{\bar\alpha,D'}\rangle$ witness that $(\Pi \bar\alpha,<_{D'})$ 
has pseudo true cofinality by $(*)_7$, and so ps-tcf$(\Pi \bar\alpha,<_D) 
= \text{ cf(otp}(\Xi_{\bar\alpha,D'})) 
= \text{ cf}(\text{rk}_D(\bar \alpha))$, so 
we are done.  
\end{PROOF}

So we have 
\begin{dc}
\label{r10}  
1) We say that $\delta =$ ps-tcf$_{\bar D}(\bar \alpha)$, where 
$\delta$ is a limit ordinal \und{when}, for some set $Y$:
\medskip

\noindent
\begin{enumerate}
\item[$(a)$]  $\bar \alpha \in {}^Y\text{Ord}$
\item[$(b)$]   $\bar D = (D_1,D_2)$
\item[$(c)$]  $D_1 \subseteq D_2$ are $\aleph_1$-complete
filters on $Y$ 
\item[$(d)$]   rk$_{D_1}(\bar\alpha) = \delta = \sup(\Xi_{\bar
D,\bar\alpha})$ where $\Xi_{\bar D,\bar \alpha} = \{\gamma <
\text{ rk}_{D_1}(\bar \alpha)$: for some $f < \bar \alpha$ mod $D_1$, we
have rk$_{D_1}(f) =\gamma$ and $D_2 =$ dual$(J[f,D_1]\}$.
\end{enumerate}

\noindent
2) If $D_1$ is $\aleph_1$-complete filter on $Y,\bar \alpha =
\langle \alpha_t:t \in Y\rangle$ and cf$(\alpha_t) \ge
\theta(\text{Fil}^1_{\aleph_1}(Y))$ for $t \in Y$ \und{then} for some
$\aleph_1$-complete filter $D_2$ on $Y$ extending $D_1$ we have
ps-tcf$_{(D_1,D_2)}(\bar \alpha)$ is well defined.

\noindent
3) Moreover in part (2) there is a definition giving for any
$(Y,D_1,D_2,\bar \alpha)$ as there, a sequence $\langle 
{\cF}_\gamma:\gamma < \delta\rangle$ exemplifying the value of
ps-tcf$_{\bar D}(\bar \alpha)$.
\end{dc}

\begin{PROOF}{\ref{r10}}
2), 3)  Let $\delta := \text{ rk}_{D_1}(f)$, so by Claim \ref{r7}(3) we
have cf$(\delta) \ge \theta(\text{Fil}^1_{\aleph_1}(Y))$ hence by Claim
\ref{r9} above and its proof the conclusion holds: the proof is needed
for ``$\delta = \sup(\Xi_{\bar D,\alpha})"$, noting observation
\ref{r10d} below.
\end{PROOF}

\begin{observation}
\label{r10d}
1) [DC] or just [AC$_{\aleph_0}$].

Assume $D$ is an $\aleph_1$-complete filter on $Y$ and $f,f_n \in
{}^Y\text{\rm Ord}$ for $n < \omega$ and $f(t) = \sup\{f_n(t):n <
\omega\}$.  Then {\rm rk}$_D(f) = \sup\{\text{\rm rk}_D(f_n):n < \omega\}$.
\end{observation}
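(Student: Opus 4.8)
The plan is to prove the two inequalities $\sup\{\text{rk}_D(f_n):n<\omega\}\le\text{rk}_D(f)$ and $\text{rk}_D(f)\le\sup\{\text{rk}_D(f_n):n<\omega\}$ separately. The first is immediate and needs neither $\aleph_1$-completeness nor any choice: since $f_n\le f$ pointwise we have $f_n\le_D f$, so $\text{rk}_D(f_n)\le\text{rk}_D(f)$ by Observation \ref{z0.24}(1), and we take the supremum over $n$.

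For the second inequality, set $\gamma:=\sup\{\text{rk}_D(f_n):n<\omega\}$ and assume towards a contradiction that $\text{rk}_D(f)>\gamma$. Since $\text{rk}_D(f)<\infty$ by Claim \ref{z0.23}(1), Claim \ref{z0.23}(2) (or \ref{z0.23}(2A), which avoids DC) provides $g\in{}^Y\text{Ord}$ with $g<_D f$, $\text{rk}_D(g)=\gamma$, and moreover $g\le f$ pointwise with $g(t)<f(t)$ whenever $f(t)>0$. Let $A:=\{t\in Y:g(t)<f(t)\}\in D$. For every $t\in A$ we have $g(t)<f(t)=\sup_n f_n(t)$, hence $g(t)<f_n(t)$ for some $n$; so, setting $A_n:=\{t\in A:g(t)<f_n(t)\}$, we get $A=\bigcup_{n<\omega}A_n$. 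The point is now to locate a single index $n^\ast$ such that the piece on which $g$ has already dropped below $f_{n^\ast}$ is ``$D$-large enough to carry the rank of $g$''. I would do this through Claim \ref{z0.25}(1): refine the countable covering $Y=(Y\setminus A)\cup\bigcup_n A_n$ to a countable partition of $Y$; since $A\in D$ the piece $Y\setminus A$ is not in $D^+$, so Claim \ref{z0.25}(1) (this is where $\text{AC}_{\aleph_0}$ enters) yields $\text{rk}_D(g)=\text{rk}_{D+B^\ast}(g)$ for some piece $B^\ast\subseteq A_{n^\ast}$, $B^\ast\in D^+$. On $B^\ast$ we have $g<f_{n^\ast}$ pointwise, hence $g<_{D+B^\ast}f_{n^\ast}$, and Claim \ref{z0.23}(3) gives $\gamma=\text{rk}_D(g)=\text{rk}_{D+B^\ast}(g)<\text{rk}_{D+B^\ast}(f_{n^\ast})$.

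The step I expect to be the real obstacle — and the only place $\aleph_1$-completeness and $\text{AC}_{\aleph_0}$ are genuinely used — is closing this last gap, i.e.\ passing from $\text{rk}_{D+B^\ast}(f_{n^\ast})$ back down to $\gamma$. By Observation \ref{z0.24}(2) restricting to $D+B^\ast$ can only increase rank, so a priori $\text{rk}_{D+B^\ast}(f_{n^\ast})$ may exceed $\gamma$ even though $\text{rk}_D(f_{n^\ast})\le\gamma$; hence one really needs the locating piece to lie in $D$ itself, not merely in $D^+$, so that no restriction is performed. When $D$ is an ultrafilter this is automatic: $A=\bigcup_n A_n\in D$ together with $\aleph_1$-completeness of $D$ forces some $A_{n^\ast}\in D$, whence $g<_D f_{n^\ast}$ directly and $\gamma=\text{rk}_D(g)<\text{rk}_D(f_{n^\ast})\le\gamma$, the desired contradiction. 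For a general $\aleph_1$-complete $D$ one arranges the covering and the choice of pieces (again via Claim \ref{z0.25}, applied to $g$) so that the minimizing piece is $D$-large; it is this bookkeeping, rather than any new idea, that is delicate, and I would pin it down first.
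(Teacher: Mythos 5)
The gap you flagged at the end is not bookkeeping: the observation as stated is actually false for non-ultrafilters, and your argument cannot be rescued. Take $D$ to be the club filter on $\omega_1$, partition $\omega_1=\bigcup_{n<\omega}S_n$ into pairwise disjoint stationary (hence co-stationary) sets, and set $f_n:=\chi_{S_n}$. Then $f=\sup_n f_n$ is the constant function $1$, so $\text{rk}_D(f)=1$ by Claim \ref{z0.23}(5); but $\{t:f_n(t)>0\}=S_n\notin D$, so $\text{rk}_D(f_n)=0$ by Claim \ref{z0.23}(4), and $\sup_n\text{rk}_D(f_n)=0<1$. Even insisting that the $f_n$ be strictly pointwise increasing does not help: with $T_n:=\bigcup_{m\le n}S_m$ and $f_n:=\omega\cdot\chi_{T_n}+n$ one gets $f\equiv\omega\cdot 2$ and $\text{rk}_D(f)=\omega\cdot 2$, while partitioning $\omega_1$ into the stationary pieces $T_n$ and $\omega_1\setminus T_n$ shows via Claim \ref{z0.25}(1) that $\text{rk}_D(f_n)=\min(\omega+n,n)=n$, so $\sup_n\text{rk}_D(f_n)=\omega<\omega\cdot 2$.

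Running your argument through the first example pinpoints exactly where it dies: with $g\equiv 0$ and $\gamma=0$, the sets $B_n=\{t:g(t)<f_n(t)\}=S_n$ all preserve $\text{rk}_D(g)=0$, so any of them can be the minimizing piece $B^\ast$ supplied by Claim \ref{z0.25}(1); but $\text{rk}_{D+S_{n^\ast}}(f_{n^\ast})=1>\gamma$, because restricting to a positive-but-not-large set genuinely raises the rank of $f_{n^\ast}$. This is exactly the jump you correctly called the real obstacle, and it is a counterexample, not a loose end. Your ultrafilter argument, on the other hand, is sound: there $\aleph_1$-completeness forces some $B_{n^\ast}$ into $D$ itself, no restriction occurs, and $\gamma=\text{rk}_D(g)<\text{rk}_D(f_{n^\ast})\le\gamma$ closes. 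So the observation is a theorem for $\aleph_1$-complete ultrafilters, but for a general $\aleph_1$-complete filter it needs additional hypotheses tying $\langle f_n:n<\omega\rangle$ to $D$ (it is used in the paper only in the setting of \ref{r12}(2), which supplies extra structure). Note also that the paper's own proof is a single-line citation of Claim \ref{z0.25} and \cite{Sh:71} and does not establish the stated equality either, so you have not misread anything the author actually wrote down.
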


\begin{remark}  Similarly for other amounts of completeness, see \ref{r13}.
\end{remark}

\begin{PROOF}{\ref{r10d}}  As rk$_D(f) = \text{ min}\{\text{rk}_{D+A_n}(f):n <
\omega\}$ if $\cup\{A_n:n < \omega\} \in D,A_n \in D^+$ by \ref{z0.25}
or see \cite{Sh:71}.
\end{PROOF}

\begin{remark}
Also in \ref{z0.25}(2) can use AC$_Y$ only, i.e. omit
the assumption DC, a marginal point here.
\end{remark}

\begin{claim}
\label{r11}  [{\rm AC}$_{< \theta}$]  The ordinal
$\delta$ has cofinality $\ge \theta$ \when \,:
\medskip

\noindent
\begin{enumerate}
\item[$\circledast$]   $(a) \quad \delta = \text{\rm rk}_D(\bar\alpha)$
\item[${{}}$]  $(b) \quad \bar \alpha = \langle \alpha_y:y \in Y
\rangle \in {}^Y\text{\rm Ord}$
\item[${{}}$]  $(c) \quad D$ is an $\aleph_1$-complete filter on $Y$
\item[${{}}$]  $(d) \quad y \in Y \Rightarrow \text{\rm cf}
(\alpha_y) \ge \theta$.
\end{enumerate}
\end{claim}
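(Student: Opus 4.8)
The plan is to argue by contradiction: assume $\mathrm{cf}(\delta)<\theta$ and build a function $g<_D\bar\alpha$ whose $\mathrm{rk}_D$-value is forced to be at least $\delta$, contradicting $\mathrm{rk}_D(\bar\alpha)=\delta$ via the strict monotonicity \ref{z0.23}(3). The only use of choice will be a single simultaneous selection of witnesses, which is exactly what $\mathrm{AC}_{<\theta}$ supplies.

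First I would record some preliminaries. Each $\alpha_y$ is an infinite limit ordinal, since $\mathrm{cf}(\alpha_y)\ge\theta$; in particular $\{y:\alpha_y>0\}=Y\in D$, so $\delta=\mathrm{rk}_D(\bar\alpha)>0$ by \ref{z0.23}(4). Next I claim $\delta$ is a \emph{limit} ordinal (this step is essential: for a successor $\delta$ one has $\mathrm{cf}(\delta)=1$, so the statement would be false, and indeed the hypotheses must rule this out). If $\delta=\beta+1$, then since $\beta<\mathrm{rk}_D(\bar\alpha)<\infty$, Claim \ref{z0.23}(2A) gives $g$ with $g<_D\bar\alpha$, $g(y)<\alpha_y$ for all $y\in Y$, and $\mathrm{rk}_D(g)=\beta$. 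Let $g'(y)=g(y)+1$; as $\alpha_y$ is a limit and $g(y)<\alpha_y$ we get $g'(y)<\alpha_y$ for all $y$, so $g'<_D\bar\alpha$. By \ref{z0.23}(5) $\mathrm{rk}_D(g')=\beta+1=\delta$, while $g'<_D\bar\alpha$ and $\mathrm{rk}_D(g')<\infty$ force $\mathrm{rk}_D(g')<\mathrm{rk}_D(\bar\alpha)=\delta$ by \ref{z0.23}(3), a contradiction. Hence $\mathrm{cf}(\delta)$ is a well-defined infinite regular cardinal.

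Now suppose toward a contradiction that $\mathrm{cf}(\delta)<\theta$ and fix an increasing cofinal sequence $\langle\gamma_\epsilon:\epsilon<\mathrm{cf}(\delta)\rangle$ in $\delta$. For each $\epsilon$ the set $\mathcal{G}_\epsilon:=\{h\in\prod_{t\in Y}(\alpha_t+1):h<_D\bar\alpha,\ \mathrm{rk}_D(h)=\gamma_\epsilon,\ (\forall y\in Y)(h(y)<\alpha_y)\}$ is non-empty by \ref{z0.23}(2A) (applicable since $\gamma_\epsilon<\delta=\mathrm{rk}_D(\bar\alpha)<\infty$ and each $\alpha_y>0$), and the sequence $\langle\mathcal{G}_\epsilon:\epsilon<\mathrm{cf}(\delta)\rangle$ exists. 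Since $\mathrm{cf}(\delta)<\theta$, $\mathrm{AC}_{<\theta}$ lets me pick $g_\epsilon\in\mathcal{G}_\epsilon$ for all $\epsilon$. Define $g\in{}^Y\mathrm{Ord}$ by $g(y):=\sup\{g_\epsilon(y):\epsilon<\mathrm{cf}(\delta)\}$. Because $\mathrm{cf}(\delta)<\theta\le\mathrm{cf}(\alpha_y)$, this is a supremum of fewer than $\mathrm{cf}(\alpha_y)$ ordinals below $\alpha_y$, hence $g(y)<\alpha_y$; so $g<\bar\alpha$ pointwise and in particular $g<_D\bar\alpha$. On one hand, $g_\epsilon\le g$ pointwise gives $g_\epsilon\le_D g$, so $\mathrm{rk}_D(g)\ge\mathrm{rk}_D(g_\epsilon)=\gamma_\epsilon$ for every $\epsilon$ by \ref{z0.24}(1), whence $\mathrm{rk}_D(g)\ge\sup_\epsilon\gamma_\epsilon=\delta$. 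On the other hand, $g\le_D\bar\alpha$ and $\mathrm{rk}_D(\bar\alpha)=\delta<\infty$ give $\mathrm{rk}_D(g)\le\delta<\infty$ by \ref{z0.24}(1), so \ref{z0.23}(3) yields $\mathrm{rk}_D(g)<\mathrm{rk}_D(\bar\alpha)=\delta$. This contradiction proves $\mathrm{cf}(\delta)\ge\theta$.

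I expect the two points needing attention — and the only ones not immediate from the quoted lemmas — to be: (i) establishing that $\delta$ is a limit ordinal (without it the claim is literally false for successor values), and (ii) isolating where choice is actually used, namely the simultaneous choice of the witnesses $g_\epsilon$; everything else (the no-hole property \ref{z0.23}(2A), strict monotonicity \ref{z0.23}(3), and monotonicity \ref{z0.24}(1)) rests only on $\aleph_1$-completeness of $D$, so no extra instance of $\mathrm{DC}$ is needed beyond the standing assumption that $\delta$ is an ordinal.
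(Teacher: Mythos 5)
Your proof is correct and follows essentially the same strategy as the paper's: use $\mathrm{AC}_{<\theta}$ to choose, for each ordinal in a small cofinal set below $\delta$, a witness $g_\epsilon <_D \bar\alpha$ of the prescribed rank, take the pointwise supremum (which stays below $\bar\alpha$ because $\mathrm{cf}(\delta)<\theta\le\mathrm{cf}(\alpha_y)$), and derive the contradiction $\delta\le\mathrm{rk}_D(g)<\delta$ from \ref{z0.24}(1) and \ref{z0.23}(3). Your preliminary verification that $\delta$ is a limit ordinal is a point the paper leaves implicit, and it is genuinely needed to make the step $\sup(C)=\delta$ (equivalently, $\sup_\epsilon\gamma_\epsilon=\delta$) legitimate, so it is a worthwhile addition rather than a digression.
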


\begin{PROOF}{\ref{r11}}  
Note that $y \in Y \Rightarrow \alpha_y > 0$.  Toward
contradiction assume cf$(\delta) < \theta$ so $\delta$ has a cofinal
subset $C$ of cardinality $< \theta$.  For each $\beta <
\delta$ for some $f \in {}^Y \text{Ord}$ we have rk$_D(f) = \beta$
and $f <_D \bar\alpha$ and \wilog \, $f \in \prod\limits_{y \in Y}
\alpha_y$.  By AC$_{< \theta}$ there is a sequence $\langle
f_\beta:\beta \in C\rangle$ such that $f_\beta \in \prod\limits_{y \in Y}
\alpha_y,f_\beta <_D \bar\alpha$ and rk$_D(f_\beta) = \beta$.  Define
$g \in \prod\limits_{y \in Y} \alpha_y$ by $g(y) =
\cup\{f_\beta(y):\beta \in C$ and $f_\beta(y) < \alpha_t\}$.  By
clause (d) we have $[y \in Y \Rightarrow g(y) < \alpha_y]$,
so $g <_D \bar\alpha$, hence rk$_D(\bar g) < \text{
rk}_D(\alpha)$ but by the choice of $g$ we have 
$\beta \in C \Rightarrow f_\beta \le_D g$ hence $\beta \in C \Rightarrow
\beta = \text{ rk}_D(f_\beta) \le \text{ rk}_D(g)$ hence $\delta =
\sup(C) \le \text{ rk}_D(g)$, contradiction.  
\end{PROOF}

\begin{observation}
\label{r12}
1) Assume $(\bar\alpha,D)$ satisfies 
\medskip

\noindent
\begin{enumerate}  
\item[$(a)$]   $D$ a filter on $Y$ and $\bar\alpha = \langle
\alpha_t:t \in Y\rangle$ and each $\alpha_t$ is a limit ordinal
\smallskip

\noindent
\item[$(b)$]   $\bar\cF = \langle {\cF}_\beta:\beta < \partial\rangle$
exemplify $\partial = \text{\rm ps-tcf}(\Pi \bar \alpha,<_D)$ \und{so}
we demand just $\partial = \sup\{\beta <
\partial:\cF_\beta \ne \emptyset\}$
\smallskip

\noindent
\item[$(c)$]  ${\cF}'_\beta = \{f \in \prod\limits_{t \in Y}
\alpha_t$: for some $g \in {\cF}_\beta$ we have $f=g$ {\rm mod} $D\}$.
\end{enumerate}

\und{Then}: $\langle {\cF}'_\beta:\beta < \partial \rangle$
exemplify $\partial = \text{\rm ps-tcf}(\Pi \bar \alpha,<_D)$ that is
\mn
\begin{enumerate}
\item[$(\alpha)$]   $\bigcup\limits_{\beta < \gamma} \cF'_\beta$ is
cofinal in $(\Pi \bar\alpha,<_D)$
\smallskip

\noindent
\item[$(\beta)$]  for every $\beta_1 < \beta_2 < \partial$ and $f_1 \in
{\cF}'_{\beta_1}$ and $f_2 \in {\cF}'_{\beta_2}$ we have
$f_1 \le f_2$.
\end{enumerate}
\mn
2) Similarly, if $D,\bar{\cF}$ satisfies clauses (a),(b) above and 
$D$ is $\aleph_1$-complete and $\partial = 
\text{\rm cf}(\partial) > \aleph_0$ \then \, we can ``correct"
$\bar{\cF}$ to make it $\aleph_0$-continuous that is $\langle
{\cF}''_\beta:\beta < \partial\rangle$ defined in $(c)_1 + (c)_2$
below satisfies $(\alpha) + (\beta)$
above and $(\gamma)$ below and so is $\aleph_0$-continuous, (see
below) where
\medskip

\noindent
\begin{enumerate}
\item[$(c)_1$]  if $\beta < \partial$ and {\rm cf}$(\beta) \ne \aleph_0$
then ${\cF}''_\beta = {\cF}'_\beta$
\smallskip

\noindent
\item[$(c)_2$]   if $\beta < \partial$ and {\rm cf}$(\beta) = \aleph_0$ then
${\cF}''_\beta = \{\sup\langle f_n:n < \omega\rangle$: for some
increasing sequence $\langle \beta_n:n < \omega\rangle$ with limit
$\beta$ we have $n <\omega \Rightarrow f_n \in {\cF}'_{\beta_n}\}$,
see below
\smallskip

\noindent
\item[$(\gamma)$]   if $\beta < \partial$ and {\rm cf}$(\beta) = \aleph_0$ and 
$f_1,f_2 \in {\cF}''_\beta$ then $f_1 = f_2$ mod $D$. 
\end{enumerate}
\mn
3) This applies to any increasing sequence $\langle \cF_\beta:\beta <
\delta\rangle,\cF_\beta \subseteq {}^Y\text{\rm Ord},\delta$ a limit ordinal.
\end{observation}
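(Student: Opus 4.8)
The plan is to read both parts as bookkeeping around two facts: the conditions $\circledast_{\bar{\cF}}$ of Definition \ref{r6}(2) defining a pseudo-true-cofinality witness are insensitive to replacing a function by a $D$-equivalent one, and $\aleph_1$-completeness of $D$ controls countable suprema. Part (1) uses only ZF; part (2) uses in addition that $D$ is $\aleph_1$-complete and $\partial$ is a regular uncountable cardinal.

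For part (1) I would first record two elementary observations. (i) Every $f\in\prod_{t\in Y}\alpha_t$ satisfies $f<_D\bar\alpha$, since this holds pointwise and $Y\in D$. (ii) Every $g\in{}^Y\text{Ord}$ with $g<_D\bar\alpha$ has a truncation $g^{\rm tr}\in\prod_t\alpha_t$ with $g^{\rm tr}=g$ mod $D$: set $g^{\rm tr}(t)=g(t)$ if $g(t)<\alpha_t$ and $g^{\rm tr}(t)=0$ otherwise, where we use that each $\alpha_t$ is a limit ordinal, so $0<\alpha_t$. From (ii), $\cF'_\beta\neq\emptyset$ whenever $\cF_\beta\neq\emptyset$, hence $\sup\{\beta:\cF'_\beta\neq\emptyset\}=\sup\{\beta:\cF_\beta\neq\emptyset\}=\partial$. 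For clause $(\beta)$: given $\beta_1<\beta_2<\partial$ and $f_\ell\in\cF'_{\beta_\ell}$, pick $g_\ell\in\cF_{\beta_\ell}$ with $f_\ell=g_\ell$ mod $D$; on the intersection of $\{t:f_1(t)=g_1(t)\}$, $\{t:g_1(t)<g_2(t)\}$ (in $D$ by $\circledast_{\bar{\cF}}(c)$) and $\{t:g_2(t)=f_2(t)\}$ — a member of $D$, being a finite intersection of members of $D$ — one has $f_1(t)<f_2(t)$, so $f_1<_D f_2$, a fortiori $f_1\le_D f_2$. For clause $(\alpha)$: given $f\in\Pi\bar\alpha$, clause $\circledast_{\bar{\cF}}(d)$ provides $\beta$ and $g\in\cF_\beta$ with $f\le g$ mod $D$; replacing $g$ by $g^{\rm tr}\in\cF'_\beta$ gives $f\le_D g^{\rm tr}$, so $\bigcup_\beta\cF'_\beta$ is cofinal in $(\Pi\bar\alpha,<_D)$. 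No choice beyond ZF enters.

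For part (2), by Remark \ref{r4}(4) I may assume $\cF_\beta\neq\emptyset$ for all $\beta<\partial$, and by part (1) I may assume each $\cF_\beta=\cF'_\beta\subseteq\Pi\bar\alpha$ with all $\cF'_\beta\neq\emptyset$; then $\bar{\cF}''$ is defined by $(c)_1,(c)_2$. The one step that is not purely formal is: whenever $g_n\in\cF'_{\delta_n}$ with $\delta_0<\delta_1<\cdots$, the pointwise supremum $g:=\sup_n g_n$ still satisfies $g<_D\bar\alpha$, so that $\cF''_\beta$ really is a family of functions below $\bar\alpha$ mod $D$. For this: the $g_n$ form a $<_D$-increasing $\omega$-chain, and since $\partial=\text{ps-tcf}(\Pi\bar\alpha,<_D)>\aleph_0$, Claim \ref{r7}(1) (or \ref{r7}(1A)) makes $(\Pi\bar\alpha,<_D)$ $(<\partial)$-directed, so some $h\in\Pi\bar\alpha$ bounds every $g_n$ mod $D$; as $D$ is $\aleph_1$-complete, $\bigcap_n\{t:g_n(t)<h(t)\}\in D$, and on this set $g(t)=\sup_n g_n(t)\le h(t)<\alpha_t$, whence $g\le_D h<_D\bar\alpha$. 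Granting this, clause $(\alpha)$ for $\bar{\cF}''$ holds since $\cF''_\beta=\cF'_\beta$ whenever $\cf(\beta)\neq\aleph_0$ (in particular at all successors), so $\bigcup_\beta\cF''_\beta\supseteq\bigcup\{\cF'_\beta:\cf(\beta)\neq\aleph_0\}$, which is cofinal by part (1) together with the $<_D$-increasingness of $\bar{\cF}'$ (given $f\le_D g\in\cF'_{\beta_0}$, pass to $\cF'_{\beta_0+1}$). Clause $(\beta)$ is verified by cases on $\cf(\beta_1),\cf(\beta_2)$: writing each $f_\ell\in\cF''_{\beta_\ell}$ either as a member of $\cF'_{\beta_\ell}$ or as $\sup_n g^\ell_n$ with $g^\ell_n\in\cF'_{\delta^\ell_n}$, $\delta^\ell_n\uparrow\beta_\ell$, one interposes an index $\gamma$ with $\beta_1<\gamma\le\beta_2$ and a function $g\in\cF'_\gamma$, and uses $<_D$-increasingness of $\bar{\cF}'$ to get $f_1\le_D g$ (via an $\aleph_1$-completeness intersection when $\cf(\beta_1)=\aleph_0$) and $g\le_D f_2$, hence $f_1\le_D f_2$. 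Clause $(\gamma)$: if $\cf(\beta)=\aleph_0$ and $f_1=\sup_n g_n$, $f_2=\sup_m h_m$ with $g_n\in\cF'_{\delta_n}$, $h_m\in\cF'_{\gamma_m}$ and $\delta_n,\gamma_m\uparrow\beta$, then for each $n$ pick $m$ with $\gamma_m>\delta_n$; by increasingness $g_n<_D h_m\le f_2$ pointwise, so $g_n<_D f_2$ for every $n$, and intersecting over $n$ ($\aleph_1$-completeness) gives $f_1=\sup_n g_n\le_D f_2$; symmetrically $f_2\le_D f_1$, hence $f_1=f_2$ mod $D$. Finally, part (3) is the remark that the arguments above used nothing about $\bar{\cF}$ beyond its being $<_D$-increasing — and, for clause $(\alpha)$ alone, cofinal — so the same constructions apply verbatim to an arbitrary $<_D$-increasing $\langle\cF_\beta:\beta<\delta\rangle$ of subsets of ${}^Y\text{Ord}$.

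The main obstacle is precisely the highlighted step in part (2): that a countable $<_D$-increasing sequence of functions below $\bar\alpha$ has pointwise supremum still below $\bar\alpha$ mod $D$. This can fail for limit ordinals $\alpha_t$ of countable cofinality, and it is exactly here that both hypotheses of part (2) are needed — $\text{ps-tcf}>\aleph_0$ (through $(<\partial)$-directedness, producing the bound $h$) and $\aleph_1$-completeness of $D$ (to pass from ``$g_n<_D h$ for each $n$'' to ``$\sup_n g_n\le_D h$''). The remaining work is routine $\aleph_1$-completeness sandwiching. I would also keep in mind that populating $\cF''_\beta$ at points of countable cofinality uses AC$_{\aleph_0}$ (equivalently DC); this is harmless and consistent with the rest of the section, and in any case the nonempty entries at successor indices already suffice for clause $(\alpha)$.
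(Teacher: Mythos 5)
Your argument is correct and carries out exactly the routine verification the paper compresses into ``Straightforward'': for part (1) the truncation of representatives into $\prod_{t\in Y}\alpha_t$ using that each $\alpha_t$ is a limit, and for parts (2)--(3) the identification of the one non-trivial point --- that a countable pointwise supremum of functions that are $<_D\bar\alpha$ remains $<_D\bar\alpha$ --- which you settle via $\aleph_1$-completeness and the uncountability of $\partial$, with the rest being sandwiching over intersections of $D$-sets. The detour through Claim \ref{r7}(1) to produce the bound $h$ could be replaced by the even more elementary step of taking any $h\in\cF'_{\beta'}$ for a successor $\beta'\in(\beta,\partial)$ (available after your normalization via Remark \ref{r4}(4)), but that is only a stylistic simplification.
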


\begin{PROOF}{\ref{r12}}
  Straightforward.
\end{PROOF}

\begin{definition}
\label{r13} 
0) If $f_n \in {}^Y\text{Ord}$
for $n < \omega$, \und{then} sup$\langle f_n:n < \omega\rangle$ is
defined as the function $f$ with domain $Y$ such that
$f(t) = \cup\{f_n(t):n < \omega\}$.

\noindent
1) We say $\bar{\cF} = \langle {\cF}_\beta:\beta < \lambda\rangle$ 
exemplifying $\lambda = \text{ ps-tcf}(\Pi \bar\alpha,<_D)$ is 
weakly $\aleph_0$-continuous \und{when}:

if $\beta < \partial$, cf$(\beta) = \aleph_0$ and $f \in {\cF}_{\beta_n}$
then for some sequence $\langle(\beta_n,f_n):n<\omega\rangle$ we have
$\beta = \cup\{\beta_n:n < \omega\},\beta_n < \beta_{n+1} < \beta,f_n
\in {\cF}_{\beta_n}$ and $f = \sup\langle f_n:n <
\omega\rangle$; so if $D$ is $\aleph_1$-complete then $\{f/D:f \in
{\cF}_\beta\}$ is a singleton.

\noindent
2) We say it is $\aleph_0$-continuous if we can replace the last
 ``then" by ``iff".
\end{definition}

\begin{theorem}
\label{r14} \und{The Canonical Filter Theorem}
Assume {\rm DC} and {\rm AC}$_{\cP(Y)}$.

Assume $\bar \alpha = \langle \alpha_t:t \in Y\rangle \in
{}^Y\text{\rm Ord}$ and $t \in Y \Rightarrow \text{\rm cf}(\alpha_t) 
\ge \theta({\cP}(Y))$ and
$\partial \in \text{\rm ps-pcf}_{\aleph_1\text{\rm -comp}}(\bar \alpha)$
hence is a regular cardinal.  \und{Then} there is $D =
D^{\bar\alpha}_\partial$, an $\aleph_1$-complete filter on $Y$ such that 
$\partial = \text{\rm ps-tcf}(\Pi \bar \alpha/D)$ and $D \subseteq D'$
for any other such $D' \in \text{\rm Fil}^1_{\aleph_1}(D)$.
\end{theorem}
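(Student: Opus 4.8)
The plan is to realise $D^{\bar\alpha}_\partial$ as the dual of a canonical ideal on $Y$, mimicking the construction of $J_{<\lambda}[\bar f]$ in ordinary pcf theory but carried out with ranks in place of ultrapower cofinalities (there may be no ultrafilters around at all). Throughout I use $\text{AC}_{\cP(Y)}$ freely — it supplies $\text{AC}_A$ for every $A$ of size $\le|\cP(Y)|$, in particular countable choice — and DC for the rank calculus of \S1. For an $\aleph_1$-complete filter $E$ on $Y$, \ref{r11} applied with $\theta=\theta(\cP(Y))$ and the hypothesis $\text{cf}(\alpha_t)\ge\theta(\cP(Y))$ gives $\text{cf}(\text{rk}_E(\bar\alpha))\ge\theta(\cP(Y))$. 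Conversely, if $\Pi\bar\alpha/E$ has pseudo true cofinality $\partial$ with witness $\langle\cF_\gamma:\gamma<\partial\rangle$ (each $\cF_\gamma\ne\emptyset$, entries in $\Pi\bar\alpha$ by \ref{r12}(1)), then $\rho_\gamma:=\sup\{\text{rk}_E(h):h\in\cF_\gamma\}$ defines, via clause (c) of the witness and \ref{z0.23}(3), a cofinal and eventually strictly increasing sequence in $\text{rk}_E(\bar\alpha)$, so $\text{cf}(\text{rk}_E(\bar\alpha))=\partial$; moreover, by the analysis in \ref{r9}, \ref{r10}, $\Pi\bar\alpha/E$ has a pseudo true cofinality precisely when, in addition, $\{\text{rk}_E(f):f<_E\bar\alpha,\ \text{dual}(J[f,E])=E\}$ is cofinal in $\text{rk}_E(\bar\alpha)$. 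One further consequence of the same ``restrict the witness'' move: if $E$ has pseudo true cofinality $\partial$ and $A\in E^+$, then so does $E+A$. Finally, since $\partial\in\text{ps-pcf}_{\aleph_1\text{-comp}}(\bar\alpha)$, some $\aleph_1$-complete $E$ has $\text{cf}(\text{rk}_E(\bar\alpha))=\partial$, whence $\partial\le\theta(\cP(Y))$.

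\textbf{Step 2 (the canonical ideal and its generator).} Put
\[
J_{<\partial}[\bar\alpha]:=\{A\subseteq Y:\text{every }\aleph_1\text{-complete filter }E\text{ with }A\in E\text{ satisfies }\text{cf}(\text{rk}_E(\bar\alpha))<\partial\}
\]
and define $J_{\le\partial}[\bar\alpha]$ in the same way with ``$\le\partial$''. Closure under subsets is immediate ($A'\subseteq A\in E\Rightarrow A\in E$). Closure under countable unions — the ZF, little-choice replacement for ``$J_{<\lambda}[\bar f]$ is an ideal'' — comes from \ref{z0.25}(1): for $A=\bigcup_nA_n$ with $A_n\in J_{<\partial}[\bar\alpha]$ and $E\ni A$, disjointify and use $\text{rk}_E(\bar\alpha)=\min_n\text{rk}_{E+A'_n}(\bar\alpha)$, attained at some $n$, with $A'_n\subseteq A_n$ forcing cofinality $<\partial$. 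So $J_{<\partial}[\bar\alpha]$ and $J_{\le\partial}[\bar\alpha]$ are $\aleph_1$-complete ideals, and $Y\notin J_{<\partial}[\bar\alpha]$ by Step 1. The substantive part of the theorem is now the analogue of the pcf generator lemma: $J_{\le\partial}[\bar\alpha]/J_{<\partial}[\bar\alpha]$ is principal — there is $B_\partial\in J_{\le\partial}[\bar\alpha]$ with $J_{\le\partial}[\bar\alpha]=J_{<\partial}[\bar\alpha]+B_\partial$ — obtained by a $\subseteq$-minimality argument among the (set-many) $J_{<\partial}[\bar\alpha]$-classes of subsets of $Y$, using the ideal property and the realisation of rank-cofinality $\partial$ from Step 1.

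\textbf{Step 3 (definition and verification).} Set $D^{\bar\alpha}_\partial:=\{A\subseteq Y:B_\partial\setminus A\in J_{<\partial}[\bar\alpha]\}=\text{dual}(J_{<\partial}[\bar\alpha]\rest B_\partial)$, an $\aleph_1$-complete filter with $B_\partial\in D^{\bar\alpha}_\partial$. The generator property gives $\text{cf}(\text{rk}_{D^{\bar\alpha}_\partial}(\bar\alpha))=\partial$, so \ref{r9} (equivalently \ref{r10}), whose cofinality hypothesis holds here, shows that $\Pi\bar\alpha/D^{\bar\alpha}_\partial$ has a pseudo true cofinality, necessarily $\partial$ by \ref{r8}. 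For minimality, let $D'$ be $\aleph_1$-complete with $\text{ps-tcf}(\Pi\bar\alpha/D')=\partial$. Every $A\in J_{<\partial}[\bar\alpha]$ is $D'$-null: otherwise $A\in (D')^+$, so $D'+A$ has pseudo true cofinality $\partial$ by Step 1, i.e.\ $\text{cf}(\text{rk}_{D'+A}(\bar\alpha))=\partial$, contradicting $A\in J_{<\partial}[\bar\alpha]$. Also $B_\partial\in D'$ — the point at which the generator property is invoked: were $Y\setminus B_\partial$ $D'$-positive, $D'+(Y\setminus B_\partial)$ would transfer pseudo true cofinality $\partial$ onto $Y\setminus B_\partial$, contradicting that $B_\partial$ generates the jump from $J_{<\partial}[\bar\alpha]$ to $J_{\le\partial}[\bar\alpha]$. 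Combining the two, $J_{<\partial}[\bar\alpha]\rest B_\partial\subseteq\text{dual}(D')$, i.e.\ $D^{\bar\alpha}_\partial\subseteq D'$; so $D^{\bar\alpha}_\partial$ is the least such filter.

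\textbf{Main obstacle.} The crux is Step 2 — the ideal property of $J_{<\partial}[\bar\alpha]$, and above all principality of $J_{\le\partial}[\bar\alpha]/J_{<\partial}[\bar\alpha]$ — which is the ZF, weak-choice rendering of the heart of pcf theory (classically resting on ultrafilters and transitivity of pcf). Both have to be squeezed out of the rank apparatus of \S1 (the min-over-a-cover identity \ref{z0.25}, the ideal $J[f,E]$ of \ref{z0.29}) together with \ref{r9}–\ref{r11}, under $\text{cf}(\alpha_t)\ge\theta(\cP(Y))$ and $\text{AC}_{\cP(Y)}$. A subsidiary chore is to check, for the handful of filter families that actually intervene, that $\text{cf}(\alpha_t)\ge\theta(\cP(Y))$ really does meet the cofinality hypotheses of \ref{r9} and \ref{r11}.
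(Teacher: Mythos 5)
Your approach is genuinely different from the paper's, and it has a real gap. You try to build the canonical filter through the $J_{<\partial}$ machinery — define $J_{<\partial}[\bar\alpha]$, $J_{\le\partial}[\bar\alpha]$, prove the quotient $J_{\le\partial}/J_{<\partial}$ is principal with a generator $B_\partial$, and set $D^{\bar\alpha}_\partial = \mathrm{dual}(J_{<\partial}\rest B_\partial)$. But the generator lemma is precisely the hard part of pcf theory, and you do not prove it; you only gesture at a ``$\subseteq$-minimality argument among the set-many $J_{<\partial}$-classes,'' which does not go through. The Boolean algebra $\cP(Y)/J_{<\partial}[\bar\alpha]$ is only $\aleph_1$-complete, so there is nothing preventing an uncountable $\subseteq$-increasing chain in $J_{\le\partial}/J_{<\partial}$ with no upper bound; the classical proof of principality goes through exact upper bounds and universal sequences, and you give no replacement for that under DC $+$ AC$_{\cP(Y)}$. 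A secondary but real problem is the Step 3 argument that $B_\partial\in D'$: from $Y\setminus B_\partial\in (D')^+$ you only get $Y\setminus B_\partial\notin J_{<\partial}$, and that does not contradict $J_{\le\partial}=J_{<\partial}+B_\partial$ (in general $Y\setminus B_\partial$ is not in $J_{\le\partial}$ at all). So both the heart of Step 2 and the conclusion of Step 3 are missing. You have, in effect, reduced the theorem to a strictly harder unproved statement.

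The paper's own route avoids all of this: it sets $\bbD=\{D:\Pi\bar\alpha/D$ has pseudo true cofinality $\partial\}$, takes $D_*=\bigcap\bbD$, uses AC$_{\cP(Y)}$ to choose a subfamily $\bbD_*=\{D_A:A\in\cP_*\}$ (one $D_A$ per $A\notin D_*$) with $\bigcap\bbD_*=D_*$, then uses AC$_{\cP_*}$ to pick an $\aleph_0$-continuous exemplifying sequence $\bar\cF^A$ for each $D_A$ (via \ref{r12}(2)), and finally sets $\cF^*_\beta=\bigcap_A\cF^A_\beta$. Monotonicity $(*)_2$ and unboundedness $(*)_3$ of $\langle\cF^*_\beta\rangle$ are immediate because $D_*\subseteq D_A$ and $\mathrm{cf}(\partial)\ge\theta(\cP_*)$. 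The one nontrivial point is non-vacuity $(*)_5$: given $\beta_*<\partial$, an $\omega$-length recursion (DC) produces for each $A\in\cP_*$ a cofinal $\omega$-sequence converging to a common $\beta^*<\partial$, and $\aleph_0$-continuity forces the pointwise suprema to lie in $\cF^A_{\beta^*}$ for every $A$ simultaneously, so $\cF^*_{\beta^*}\ne\emptyset$. This intersects witnesses rather than proving a generator theorem, and it is the $\aleph_0$-continuity device — not principality — that carries the weight. If you want to salvage your route, you would first have to establish the generator lemma in this choiceless rank setting, which is an open-ended project the theorem does not require.
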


\begin{remark}
\label{r14b}
1) By \ref{r9} there are some such $\partial$.

\noindent
2) We work to use just AC$_{\cP(Y)}$ and not more.

\noindent
3) If $\kappa >\aleph_0$ we can replace ``$\aleph_1$-complete" by
   ``$\kappa$-complete". 
\end{remark}

\begin{PROOF}{\ref{r14}}   Let 
\medskip

\noindent
\begin{enumerate}
\item[$\boxplus_1$]   $(a) \quad \bbD = \{D:D$ is an 
$\aleph_1$-complete filters on $Y$ such
that $(\Pi\bar\alpha/D)$ has

\hskip40pt   pseudo true cofinality $\partial\}$,
\smallskip

\noindent
\item[${{}}$]   $(b) \quad D_* = \cap\{D:D \in \bbD\}$.
\end{enumerate}

Now obviously
\medskip

\noindent
\begin{enumerate}
\item[$(c)$]  $D_*$ is an $\aleph_1$-complete filter on $Y$.
\end{enumerate}

For $A \subseteq Y$ let $\bbD_A = \{D \in \bbD:A \notin D\}$ and
let ${\cP}_* = \{A \subseteq Y:\bbD_A \ne \emptyset\}$.  As
AC$_{{\cP}(Y)}$ we can find $\langle D_A:A \in {\cP}_*\rangle$
such that $D_A \in \bbD_A$ for $A \in {\cP}_*$.  Let $\bbD_* =
\{D_A:A \in {\cP}_*\}$, clearly
\medskip

\noindent
\begin{enumerate}
\item[$\boxplus_2$]  $D_* = \cap\{D:D \in \bbD_*\}$ and $\bbD_*
\subseteq \bbD$ is non-empty.
\end{enumerate}
\medskip

\noindent
As AC$_{\cP_*}$ holds clearly
\medskip

\noindent
\begin{enumerate}
\item[$(*)_0$]   we can 
choose $\langle \bar{\cF}^A:A \in \cP_*\rangle$ such that 
$\bar{\cF}_A$ exemplifies $D_A \in \bbD$ as in 
\ref{r12}(1),(2), 
so in particular is $\aleph_0$-continuous.
\end{enumerate}

For each $\beta < \partial$ let ${\cF}^*_\beta = \cap\{{\cF}^A_\beta:
A \in \cP_*\}$, now
\medskip

\noindent
\begin{enumerate}
\item[$(*)_1$]  ${\cF}^*_\beta \subseteq \Pi \bar\alpha$.
\end{enumerate}

[Why?  As by \ref{r12}(1)(c) we have ${\cF}^A_\beta \subseteq \Pi
\bar\alpha$ for each $A \in \cP_*$.]
\medskip

\noindent
\begin{enumerate}
\item[$(*)_2$]   if $\beta_1 < \beta_2 < \partial,f_1 \in
{\cF}^*_{\beta_1}$ and $f_2 \in {\cF}^*_{\beta_2}$ then $f_1 < f_2$ mod $D_*$.
\end{enumerate}

[Why?  Note that $A \in \cP_* \Rightarrow f_1 <_{D_A} f_2$ by the choice of
$\langle \cF^*_\beta:\beta <\partial\rangle$, hence the set $\{t \in
Y:f_1(t) < f_2(t)\}$ belongs to $D_A$ for every $A \in \cP_*$ hence
by $\boxplus_2$ it belongs to $D_*$ which means that $f_1 <_{D_*} f_2$
as required.]
\medskip

\noindent
\begin{enumerate}
\item[$(*)_3$]  if $f \in \Pi\bar \alpha$ then for some
$\beta_f < \partial$ we have $f' \in \cup\{{\cF}^*_\beta:\beta \in
[\beta_f,\partial)\} \Rightarrow f < f'$ mod $D_*$.
\end{enumerate}

[Why?  For each $A \in \cP_*$ there are $\beta,g$ such that $\beta <
\partial,g \in {\cF}^A_\beta$ and $f < g$ mod $D$ hence $\beta' \in
[\beta +1,\partial) \wedge f' \in {\cF}^A_{\beta'} \Rightarrow f < g < f'$
mod $D_A$.  Let $\beta_A$ be the minimal such ordinal $\beta <
\delta$.  As cf$(\delta) \ge \theta(\cP(Y)) \ge \theta(\cP_*)$,
clearly $\beta_* = \sup\{\beta_A +1:A \in \cP_*\}$ is $< \delta$.  So
$A \in \cP_* \wedge g \in \cup\{\cF^*_\beta:\beta \in
[\beta_*,\delta)) \Rightarrow f <_{D_A} g$.  By $\boxplus_2$ the ordinal
$\alpha_*$ is as required on $\beta_f$.]

Moreover
\medskip

\noindent
\begin{enumerate}
\item[$(*)_4$]   there is a function $f \mapsto \beta_f$ in $(*)_3$.
\end{enumerate}

[Why?  As we can (and will) choose $\beta_f$ as the
minimal $\beta$ such that ...]
\medskip

\noindent
\begin{enumerate}
\item[$(*)_5$]   for every $\beta_* < \partial$ there is
$\beta \in (\beta_*,\partial)$ such that ${\cF}^*_\beta \ne \emptyset$.
\end{enumerate}

[Why?  We choose by induction on $n$, a sequence $\bar \beta_n = \langle
\beta_{n,A}:A \in \cP_*\rangle$ and a sequence $\bar f_n =
\langle f_{n,A}:A \in \cP_*\rangle$ and a function $f_n$ such that
\medskip

\noindent
\begin{enumerate}
\item[$(\alpha)$]   $\beta_n < \partial$ and $m<n \Rightarrow \beta_m <
\beta_n$
\smallskip

\noindent
\item[$(\beta)$]   $\beta_0 = \beta_*$ and for $n > 0$ we let
$\beta_n = \sup\{\beta_{m,A}:m<n,A \in \cP_*\}$
\smallskip

\noindent
\item[$(\gamma)$]  $\beta_{n,A} \in (\beta_n,\partial)$ is minimal such that
there is $f_{n,A} \in {\cF}^A_{\beta_{n,A}}$ satisfying $n=m+1
\Rightarrow f_m < f_{\beta_n,A}$ mod $D_A$
\smallskip

\noindent
\item[$(\delta)$]   $\langle f_{n,A}:A \in \cP_* \rangle$ is a sequence
such that each $f_{n,A}$ are as in clause $(\gamma)$
\smallskip

\noindent
\item[$(\varepsilon)$]  $f_n \in \Pi \bar \alpha$ is defined by $f_n(t)
= \sup\{f_{m,A}(t) +1:A \in \cP_*$ and $m<n\}$.
\end{enumerate}

[Why can we carry the induction?  Arriving to $n$ first,
$f_n$ is well defined $\in \Pi \bar \alpha$ by clause $(\varepsilon)$ as
cf$(\alpha_t) \ge \theta(\cP_*)$ for $t \in Y$.  Second by clause
$(\gamma)$ and the choice of $\big < \langle \bar F^A_\beta:\beta <
\partial \rangle:A \in \cP_* \big>$ in $(*)_0$ the sequence
$\langle \beta_{n,A}:A \in \cP_*\rangle$ is well defined.
Third by clause $(\delta)$ we can choose 
$\langle f_{m,A}:A \in \cP_*\rangle$ because we have {\rm AC}$_{\cP_*}$.
Fourth, $\beta_n$ is well defined by clause $(\beta)$ as cf$(\delta)
\ge \theta(\cP_*)$.

Lastly, the inductive construction is possibly by DC.]
\newline

Let $\beta^* = \cup\{\beta_n:n < \omega\}$ and $f = \sup\langle f_n:n <
\omega\rangle$.  Easily $f \in \cap\{{\cF}^A_{\beta^*}:
A \in \cP_*\}$ as each $\langle \cF^A_\beta:\beta < 
\partial\rangle$ is $\aleph_0$-continuous.]
\medskip

\noindent
\begin{enumerate}
\item[$(*)_6$]   if $f \in \Pi\bar \alpha$ then for some $\beta <
\gamma$ and $f' \in {\cF}^*_\beta$ we have $f < f'$ mod $D^*$.
\end{enumerate}
\medskip

\noindent
[Why?  By $(*)_3 + (*)_5$.]
\newline

So we are done.    
\end{PROOF}
\newpage


\enddocument
\bye